\documentclass[12pt]{amsart}
\usepackage{geometry}                
\usepackage{graphicx}
\usepackage{amssymb}
\usepackage{epstopdf}
\usepackage{hyperref}
\usepackage{ascmac}
\usepackage{amsmath}
\usepackage[all]{xy}
\usepackage[dvipsnames]{xcolor}
\usepackage{enumerate}
\usepackage{amsthm}
\usepackage{appendix}
\usepackage{mathrsfs}
\usepackage{bm}
\usepackage{comment}
\usepackage{longtable}
\usepackage{float}
\usepackage{rotating}
\usepackage{graphicx}
\usepackage{pdflscape}
\usepackage{here}

\numberwithin{equation}{section}

\DeclareMathOperator{\CC}{\mathbb{C}}
\DeclareMathOperator{\RR}{\mathbb{R}}

\DeclareMathOperator{\ZZ}{\mathbb{Z}}

\DeclareMathOperator{\diag}{diag}

\DeclareMathOperator{\ii}{\sqrt{-1}}

\DeclareMathOperator{\Aut}{Aut}

\DeclareMathOperator{\tr}{tr}
\DeclareMathOperator{\Br}{Br}
\DeclareMathOperator{\Real}{Re}
\DeclareMathOperator{\ord}{order}
\DeclareMathOperator{\per}{period}

\newcommand{\tilS}{\tilde{S}^{(0)}}
\newcommand{\tilM}{\tilde{M}^{(0)}}

\newcommand{\tilQ}{\tilde{Q}^{(0)}}

\newcommand{\uppS}{\tilde{S}^{\Delta}}
\newcommand{\ee}{\underline{e}}

\theoremstyle{definition}
\newtheorem{theo}{Theorem}[section]
\newtheorem{prop}[theo]{Proposition}
\newtheorem{lem}[theo]{Lemma}
\newtheorem{cor}[theo]{Corollary}
\newtheorem{defi}[theo]{Definition}
\newtheorem{rem}[theo]{Remark}
\newtheorem{exa}[theo]{Example}

\theoremstyle{theorem}

\title{Distinguished bases and the braid group action for solutions of the tt*-Toda equations}
\author{Yudai Hateruma}
\begin{document}
\maketitle

\begin{abstract}
We implement (in a concrete example) a procedure of Dubrovin which gives a braid group action on spaces of solutions of ordinary differential equations such as the quantum differential equation of complex projective space. Such actions originated in physics for the tt* (topological-antitopological fusion) equations. We make the procedure systematic by utilizing an algebraic description of braid group actions on distinguished bases. We also generalize to the case of (arbitrary) solutions of the tt*-Toda equations, one of which corresponds to the quantum cohomology of complex projective space.
\end{abstract}

\medskip
\noindent\textit{Mathematics Subject Classification (2020):} 81T40, 34M40.

\section{Introduction}

In this article we investigate an action of the braid group on the Stokes data of the tt* (topological-antitopological fusion) equations. The tt* equations — a system of nonlinear partial differential equations — were introduced by Cecotti and Vafa \cite{CV1} as equations for massive deformations of supersymmetric quantum field theories, and it was demonstrated by Dubrovin \cite{Du1} that such deformations (i.e. solutions of the tt* equations) correspond to certain monodromy data of an associated linear ordinary differential equation. This data includes Stokes matrices as the associated ODE is meromorphic on the Riemann sphere with two poles of order 2.

On the basis of physical arguments, Cecotti and Vafa conjectured that ``physical" solutions of the tt* equations correspond to Stokes matrices with integer entries, and furthermore that equivalence classes of Stokes matrices under an action of the braid group correspond to the same theory. They proposed this \cite{CV2} as an approach to the classification of certain field theories, giving mathematical evidence mainly in the case of $2 \times 2$ and $3 \times 3$ matrices. Subsequently, a remarkable analogy with a braid group action in the theory of helices in algebraic geometry emerged \cite{Za}.

These topics have been investigated by mathematicians, with significant progress in special situations. In a series of works \cite{CDG1}, \cite{CDG2}, \cite{CDG3}, Cotti-Dubrovin-Guzzetti studied the quantum cohomology of $\mathbb{C}P^n$ (complex projective space) and also Grassmannians from the point of view of the braid group action on Stokes matrices of quantum differential equations, building on previous work by Guzzetti \cite{Gu}, and Dubrovin \cite{Du2}. The quantum differential equations here are meromorphic on the Riemann sphere with a pole of order 2 and a pole of order 1. Although these works did not consider the tt* equations, a direct connection with the tt* equations was given by Guest-Its-Lin \cite{GIL4}, who solved the tt*-Toda equations (tt* equations of Toda-type) and showed that their Stokes matrices coincide with those of an associated system of linear ordinary differential equations. For a particular solution of the tt*-Toda equations this ODE is exactly the quantum differential equation of $\mathbb{C}P^n$. For arbitrary solutions whose Stokes matrices have integer entries (i.e. the physical solutions, as conjectured by Cecotti and Vafa), there is a corresponding linear equation with the same Stokes matrices. Thus a considerable amount of information is available concerning solutions of the tt*-Toda equations and their Stokes matrices.

Our purpose is to explain in detail the braid group action in this situation. As an illustrative example, we follow the strategy which was outlined in Example 4.4 of \cite{Du2} for the quantum cohomology of $\mathbb{C}P^2$, but in the more general context of arbitrary solutions of the tt*-Toda equations; this includes all solutions whose Stokes matrices have integer entries (one of which corresponds to the quantum cohomology of $\mathbb{C}P^2$).

We make the procedure systematic by utilizing an algebraic description of braid group actions on distinguished bases, for which our main reference is the monograph of Hertling-Larabi \cite{HL}. Our treatment involves
the following two aspects which are not covered in the existing literature. First we set up and investigate the data of a unimodular bilinear lattice, not as an abstract algebraic object, but concretely in function-theoretic terms. Second we work in the general context of the tt*-Toda equations (tt* equations of Toda type), which allows us to extend the range of applicability beyond the well-known special cases corresponding to quantum cohomology \cite{CDG1}, \cite{CDG2}, \cite{CDG3} or unfoldings of singularities \cite{HL}.

The paper is organised as follows. In section 2 we prepare some background material on the tt*-Toda equations, following \cite{GIL4}. In section 3 we define the (abstract) braid group action, following \cite{HL}. Our main results are presented in section 4. These can be summarized as follows.

In section \ref{subsec:A bilinear vector space} we define an $(n+1)$-dimensional vector space $V$ (the space of solutions of a certain linear ODE) and a bilinear form $L$ (Definition \ref{the-bilinear-form}). Using a cyclic symmetry property of the ODE, we introduce an automorphism $\phi$ of $V$ preserving the bilinear form $L$.
We discuss two avatars of $\phi$: a matrix $M_F$ (which can be interpreted as an $(n+1)$-th root of the monodromy), and a recursion for solutions of the ODE (which is function-theoretic, with no analogue in the algebraic theory). In section \ref{subsec:Characterization} we exhibit a collection of ``triangular bases", associated to a solution of the tt*-Toda equations. These are related to each other by the action of elements of the braid group $\Br_{n+1}$. We also give a characterization of $L$ in these terms. In section \ref{subsec:Periodicity} we discuss the action of the element $\sigma_1  \cdots  \sigma_n$ of $\Br_{n+1}$, and show that it corresponds to the automorphism $\phi$. We prove (Theorem \ref{periodicity-of-the-braid}) that it has finite order (and we compute this order) when the holomorphic data of the solution is integral. In section \ref{3times3-example} we give some explicit results in the case $n=2$. We define distinguished bases, and show that the triangular bases of section \ref{subsec:Characterization} are a (proper) subset. Finally, in section \ref{subsec:A closed formula for L}, we give a closed formula for $L$, and show that it is equivalent to a sequence of real numbers $(a_r)_{r \in \ZZ}$. The finiteness of orbits of (subgroups of) the braid group action is of particular interest, so we summarise (in Table \ref{tab:sequence-signature-orbits}) some properties which are known in the integer Stokes cases, and relate these to properties of $L$ and $(a_r)_{r \in \ZZ}$.

\section{The tt*-Toda equations}

We review the theory of the tt*-Toda equations.
Our main references are the series of papers by Guest-Its-Lin and Guest-Ho (e.g. \cite{GIL4}, \cite{GH1}).

\begin{defi}
Let $t$ be the coordinates of $\CC^*=\CC \setminus \{0\} $.
The tt*-Toda equations are the two-dimensional periodic Toda lattice with signs of coefficients chosen as follows,
\begin{align*} \label{GIL1-1.1}
	2(w_i)_{t\bar{t}} = - e^{2(w_{i+1} - w_i)} + e^{2(w_i - w_{i-1})}, \,\,\, w_i : \CC^* \to \RR, \,\,\, i \in \ZZ
\end{align*}
where  
$w_i = w_{i+n+1}$, and $w_i$ satisfy the following conditions:
\begin{itemize}
	\item $w_i + w_{n-i} = 0$ (anti-symmetry condition).
	\item $w_i=w_i(x)$, $x = |t|$ (radial condition). 
\end{itemize}
\end{defi}

\begin{theo} [\cite{GIL4}, \cite{Mo2}] \label{asymptotic-data}
Let $\mathcal{A}_{n+1}$ be the following convex polytope in $\RR^{n+1}$:
\begin{align*}
	\mathcal{A}_{n+1}=\{ m = (m_0,\cdots, m_n) \in \RR^{n+1}  \,\, | \,\, &m_{i-1} - m_i+1 \geq 0, m_i + m_{n-i } = 0  \}.
\end{align*} 
For any $m \in \mathcal{A}_{n+1}$, there exists a unique global solution $w = \diag(w_0, \cdots, w_n)$ of the tt*-Toda equations such that
\begin{equation*}
	w_i \sim - m_i \log{x},\,\, \text{as} \,\, x \to 0. \,\,\,\,\, 
\end{equation*}
Moreover, all global solutions of the tt*-Toda equations are of this form.
We call $m_i$ the asymptotic data.
\end{theo}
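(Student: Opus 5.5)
The plan is to follow the two-step strategy of Guest--Its--Lin and Mochizuki. First we build, for each $m \in \mathcal{A}_{n+1}$, a solution with the prescribed asymptotics near $x=0$. Then we show global existence and uniqueness by a variational or maximum-principle argument.

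\emph{Local existence.} We rewrite the radial tt*-Toda system as an ODE in $x$ by writing
\[
	(w_i)_{t\bar t}=\tfrac14\left(w_i''+\tfrac1x w_i'\right).
\]
Imposing the anti-symmetry $w_i+w_{n-i}=0$ reduces the unknowns to roughly $\lfloor (n+1)/2\rfloor$ functions. We then use the DPW / Iwasawa factorization approach. We take the holomorphic data
\[
	\eta=\frac{1}{\lambda}\begin{pmatrix} & & & p_0\\ p_1& & & \\ &\ddots& & \\ & & p_n & \end{pmatrix} dt, \qquad p_i(t)=c_i t^{k_i},
\]
with $k_i=m_{i-1}-m_i$ (indices taken mod $n+1$). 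The convention is normalized so that the conditions $k_i\ge -1$ match the polytope inequalities $m_{i-1}-m_i+1\ge 0$. We solve $dL=L\eta$ with a suitable normalization at $t=0$, and Iwasawa-factorize $L=F\cdot B$. The diagonal part of $B$ gives a solution $w$ near $t=0$ with $w_i\sim -m_i\log x$. Since $t^{k_i}$ with $k_i\geq -1$ is only mildly singular, the leading logarithmic behaviour can be read off directly. Alternatively, one can linearize around $w_i=-m_i\log x$ and obtain a one-parameter family of local solutions. The expansion shows the correction terms are $o(\log x)$, precisely because the strict or nonstrict inequalities in $\mathcal{A}_{n+1}$ make the exponentials $e^{2(w_{i}-w_{i-1})}\sim x^{2(m_{i-1}-m_i)}$ integrable against $x\,dx$. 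The boundary case of equality, where this is $x^{-2}$, needs separate care.

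\emph{Global existence and uniqueness.} This step is the main obstacle. Local solutions need not extend to all of $(0,\infty)$, and the correct one must be singled out. The route I would take first is Mochizuki's: interpret radial solutions as harmonic metrics $h=\diag(e^{2w_i})$ on a cyclic Higgs bundle over $\CC^*$ with Higgs field $\eta$. Existence of a harmonic metric with prescribed growth at $0$ follows from the Kobayashi--Hitchin type correspondence for (wild) tame harmonic bundles. The parabolic weights at $0$ are determined by $m$, and the irregular singularity at $\infty$ is controlled since $\eta$ is unramified there after cyclic symmetrization. Uniqueness comes from the maximum principle for comparing two harmonic metrics with the same asymptotic data: $\log\tr(h_1h_2^{-1})$ is subharmonic and bounded, hence constant. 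Radiality and anti-symmetry then follow from uniqueness, because the rotated and dualized solutions solve the same problem.

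\emph{Exhaustion.} Conversely, let $w$ be any global solution. We show that each $w_i$ has logarithmic asymptotics $-m_i\log x$ at $0$, using Mochizuki's classification of tame harmonic bundles. The inequalities $m_{i-1}-m_i+1\ge0$ are forced by the requirement that the Higgs field be holomorphic with at most a simple pole, i.e.\ $k_i\ge -1$. This identifies every global solution with one indexed by $m\in\mathcal{A}_{n+1}$.
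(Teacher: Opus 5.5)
The paper does not prove this theorem; it quotes it from \cite{GIL4} and \cite{Mo2}. Your global part is Mochizuki's route: harmonic metrics on the cyclic Higgs bundle, Kobayashi--Hitchin, and tameness at $t=0$. Your exhaustion argument is sound: tameness plus Simpson's bound $|\theta|_h=O(|t|^{-1})$, together with $e^{w_i-w_{i-1}}\approx x^{m_{i-1}-m_i}$, forces $m_{i-1}-m_i+1\ge 0$.

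The genuine gap is in uniqueness, which you treat as routine. The claim ``$\log\tr(h_1h_2^{-1})$ is subharmonic and bounded'' needs $h_1,h_2$ to be mutually bounded near both $x=0$ and $x=\infty$. The hypothesis $w_i\sim -m_i\log x$ gives neither.
\begin{itemize}
\item At $x=\infty$ the statement imposes no condition at all. You need an a priori theorem that every global solution stays bounded (in fact $w_i\to 0$) as $x\to\infty$. This requires the analysis of the irregular singularity there, where the constant Higgs field has distinct eigenvalues $\omega^j\,dt$, together with diagonality of $h$ and $\sum_i w_i=0$. It is an essential input for both ``unique'' and ``all global solutions''. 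Saying the singularity is ``controlled'' does not supply it.
\item At $x=0$, your integrability argument upgrades $w_i\sim -m_i\log x$ to $w_i+m_i\log x=O(1)$ only in the interior of $\mathcal{A}_{n+1}$. On the boundary, where $m_{i-1}-m_i+1=0$ for some $i$ (e.g.\ $m=-\rho$, the $QH^*(\mathbb{C}P^n)$ solution), solutions carry $\log(-\log x)$ corrections. Mutual boundedness then requires the norm estimates for tame harmonic bundles adapted to the same filtered bundle and weight filtration.
\end{itemize}

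Existence has a similar omission. A metric produced by Kobayashi--Hitchin gives a Toda solution only if it is diagonal in the frame $e_i$, and radiality and anti-symmetry do not give that. It does follow from uniqueness: Hitchin's equation is invariant under $\theta\mapsto e^{\ii\varphi}\theta$, and the constant gauge $d_{n+1}$ multiplies the constant Higgs field by a root of unity. Hence $h(d_{n+1}\cdot,d_{n+1}\cdot)$ is another harmonic metric with the same filtered data. You also have to fix the filtered structure at $\infty$ and check polystability.

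Keep the Higgs data consistent. The metric $\diag(e^{2w_i})$ is harmonic for the constant Higgs field in the $t$-coordinate, with $m$ entering only as parabolic weights at $0$. It is not harmonic for your $\eta$ with entries $c_it^{k_i}$, which is multivalued unless the $k_i$ are integers and already encodes $m$.

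Finally, the DPW ``local existence'' step is never used afterwards and can be dropped. Note also that for fixed $m$ the local family near $x=0$ has $\lfloor (n+1)/2\rfloor$ free constants in general, not one.
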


Dubrovin showed that the tt* equations have a zero-curvature formulation and an isomonodromy formulation.
In particular the tt*-Toda equations have such formulations.
We introduce the associated connection forms on the trivial vector bundle $\CC^{n+1} \times \CC^* \to \CC^*$ following \cite{GIL4}.

The tt*-Toda equations are equivalent to the flatness condition of the following connection form $\alpha$:
		\begin{align*}
			\alpha
			= \left( w_t + \tfrac{1}{\lambda} W^T \right) dt + \left( -w_{\bar{t}} + \lambda W \right) d\bar{t} 
		\end{align*}
where $\lambda \in \CC^*$ is a complex parameter and
\begin{align}\label{def-of-w-and-W}
	w = \diag (w_0, \cdots, w_n), \hspace{3mm}
	\footnotesize
	W = \left(
		\begin{array}{c|c|c|c}
			& e^{w_1-w_0} & & \\  \hline
			& & \ddots & \\  \hline
			& & & e^{w_n - w_{n-1}} \\ \hline 
			e^{w_0 - w_n} & & & 
		\end{array}
	\right).
\end{align}

By direct computation, one can prove that $\alpha$ satisfies the following conditions.
\begin{itemize}
	\item Cyclic symmetry: $\tau(\alpha(\lambda)) = \alpha(e^{2 \pi \ii / (n+1)} \lambda)$,
	\item Anti-symmetry: $ \sigma(\alpha(\lambda)) = \alpha(-\lambda) $,
	\item $c$-reality: $c(\alpha^{\prime}( \lambda)) = \alpha^{\prime \prime}(1/\bar{\lambda})$ where $\alpha =\alpha^{\prime} dt + \alpha^{\prime \prime} d\bar{t} $.
\end{itemize}

The automorphisms $\tau$, $\sigma$, and $c$ on $\mathfrak{sl}_{n+1}\CC$ are defined by
\begin{equation} \label{automorphisms}
	\tau(X) = d_{n+1}^{-1}Xd_{n+1}, \hspace{3mm} \sigma(X) = - \Delta X^T \Delta,  \hspace{3mm} c(X) = \Delta \bar{X} \Delta,
\end{equation}
where $d_{n+1} = \diag(1, \omega, \cdots, \omega^n)$ ($\omega = e^{\frac{1}{n+1} 2\pi\ii}$) and  $\Delta = (\delta_{i,n-j})_{0 \leq i,j \leq n}$.
Here $X^T$ denotes the transpose of $X \in \mathfrak{sl}_{n+1}\CC$, and $\bar{X}$ is its complex conjugate.
Note that the $c$-reality condition implies that $\alpha |_{|\lambda|=1}$ is an $\mathfrak{sl}_{n+1}^{\Delta}\RR$-valued connection form, where 
$\mathfrak{sl}_{n+1}^{\Delta}\RR = \{ X \in \mathfrak{sl}_{n+1}\CC \,| \, c(X) = X \} \cong \mathfrak{sl}_{n+1}\RR$.

Using the radial condition, one has the radial version of the tt*-Toda equations
\begin{equation*}
	(xw_x)_x = 2x[W^T, W].
\end{equation*}
This is the compatibility condition of the following system
\begin{equation*}
	\left\{
	\begin{aligned}
		&\Psi_{\zeta} = \left( -\tfrac{1}{\zeta^2}  W - \tfrac{1}{\zeta} x w_x + x^2 W^T   \right) \Psi \\
		&\Psi_{x} = \left( \tfrac{1}{ x \zeta} W + x \zeta W^T   \right) \Psi
	\end{aligned}
	\right.
\end{equation*}
where $\zeta = \lambda /t$. 
This is the isomonodromy condition of the associated ODE of the flat sections of the dual connection to the following connection form
\begin{equation*}
			\hat{\alpha} = \left[ -\tfrac{1}{\zeta^2} W^T - \tfrac{1}{\zeta} xw_x + x^2 W \right]	 d\zeta.	
\end{equation*}

By direct computation, one can prove that $\hat{\alpha}$ satisfies the following conditions.
\begin{itemize}
	\item Cyclic symmetry: $\tau(\hat{\alpha}(\zeta)) = \hat{\alpha}(e^{2 \pi \ii / (n+1)} \zeta)$,
	\item Anti-symmetry: $\sigma(\hat{\alpha}(\zeta)) = \hat{\alpha}(-\zeta)$,
	\item $c$-reality: $c(\hat{\alpha}( \zeta)) = \hat{\alpha}(1/ (x^2 \bar{\zeta}))$,
	\item $\theta$-reality: $ \theta(\hat{\alpha}(\zeta)) = \hat{\alpha}(\bar{\zeta})$.
\end{itemize}

The automorphism $\theta$ on $\mathfrak{sl}_{n+1}\CC$ is defined by
\begin{equation*} 
	\theta(X) = \bar{X}.
\end{equation*}

\subsection{Stokes data (odd case)} \label{subsection-Stokes-data}
We review the Stokes data of the tt*-Toda equations when $n+1 = 2d+1$ with $d \in \ZZ_{>0}$, following \cite{GIL4}.
For the even case, we have similar results after suitable modifications (see the remark below).
Regarding complex ODE theory, we refer to \cite{FIKN}.

The ODE for flat sections of $d+ \hat{\alpha}$ is
\begin{equation*}
	\Psi_{\zeta} = \left( -\tfrac{1}{\zeta^2}  W - \tfrac{1}{\zeta} x w_x + x^2 W^T \right)\Psi 
\end{equation*}
which has poles of order $2$ at $\zeta = 0, \infty$.
As in \cite{GIL4} we choose the formal solution at $\zeta = 0$
\begin{equation} \label{def-of-Psitilde}
	\tilde{\Psi}_f^{(0)} (\zeta) = e^{-w} \Omega ( I + \Sigma_{k \geq 1} \psi^{(0)}_k \zeta^k ) e^{\frac{1}{\zeta} d_{n+1}} (d_{n+1})^{-d}
\end{equation}
where $\Omega = (\omega^{ij})_{0 \leq i,j \leq n}$.

As an initial Stokes sector, we take
\begin{equation} \label{initial-Stokes-sector}
	\Omega_1^{(0)} := \{ \zeta \in \widetilde{\CC}^* \,|\, -\tfrac{\pi}{2} - \tfrac{\pi}{2(n+1)} < \arg \zeta < \tfrac{\pi}{2} + \tfrac{\pi}{2(n+1)}  \}.
\end{equation}
$\widetilde{\CC}^*$ is the universal covering of $\CC^*$. Define further Stokes sectors by
\begin{equation} \label{Stokes-sector}
	\Omega^{(0)}_{k + \frac{1}{n+1}} := e^{-\frac{\pi}{n+1}\ii}\Omega_k^{(0)}, \hspace{3mm} k \in \tfrac{1}{n+1}\ZZ.
\end{equation}

Then there exists a unique canonical solution $\widetilde{\Psi}^{(0)}_k$ for each $k \in \tfrac{1}{n+1}\ZZ$, whose asymptotic behaviour on the Stokes sector $\Omega^{(0)}_k$ is determined by the formal solution, 
i.e., $\widetilde{\Psi}^{(0)}_k (\zeta) \sim \tilde{\Psi}_f^{(0)} (\zeta)$ as $\zeta \to 0$ on $\Omega^{(0)}_k$. 
Solutions $\widetilde{\Psi}^{(0)}_k$ extend by analytic continuation to the whole $\widetilde{\CC^*}$.
Stokes matrices and Stokes factors are defined by

	\begin{align}
		& \tilde{\Psi}_{k+1}^{(0)}(\zeta) = \tilde{\Psi}_k^{(0)}(\zeta) \tilde{S}^{(0)}_k \hspace{2mm} \text{on $\Omega^{(0)}_k \cap \Omega^{(0)}_{k+1}$}, \label{Stokes-matrices} \\
		& \tilde{\Psi}_{k+\frac{1}{n+1}}^{(0)}(\zeta) =\tilde{\Psi}_k^{(0)}(\zeta) \tilde{Q}^{(0)}_k \hspace{2mm} \text{on $\Omega^{(0)}_k \cap \Omega^{(0)}_{k+\frac{1}{n+1}}$}. \label{Stokes-factors}
	\end{align}

For $\zeta = \infty$, we introduce $\tilde{\Psi}_k^{(\infty)}(\zeta)$, $\tilde{S}^{(\infty)}_k$ and $\tilde{Q}^{(\infty)}_k$ in the similar way.
The connection matrix is defined by $\tilde{\Psi}_{k}^{(\infty)} = \tilde{\Psi}_{k}^{(0)} E_k$.

The Stokes factors satisfy the following formulae:
\begin{align}
		&\Pi^{-1} \tilQ_{k+\frac{2}{n+1}}\Pi = \tilQ_k \label{cyclicsymmetry-for-Stokesfactors} \\
		& \tilQ_{k+1} = (\tilQ_k)^{-T}. \label{antisymmetry-for-Stokesfactors}
\end{align}
Here  
$\Pi = $
$\left(
\begin{array}{cc}
	0& I_n \\ 
	1 & 0
\end{array}
\right)$ and
$I_n$ is the $n \times n$ identity matrix.
The first and second formula follow from the cyclic symmetry and anti-symmetry conditions. (See (4.3) in \cite{GIL4} and Lemma 3.2 in \cite{GIL3}.)

One can prove that all Stokes matrices and Stokes factors at $\zeta = 0$ and $\zeta = \infty$ are determined by two consecutive Stokes factors $\tilde{Q}^{(0)}_1$ and $\tilde{Q}^{(0)}_{1+\frac{1}{n+1}}$ (see Section 4 in \cite{GIL4}). 

\begin{defi}[\cite{GIL4}] \label{def-of-QQPi}
$\tilde{M}^{(0)} = \tilde{Q}^{(0)}_1\tilde{Q}^{(0)}_{1+\frac{1}{n+1}} \Pi$.
\end{defi}

\begin{theo} [\cite{GIL4}] \label{Thm-of-Stokes-data}
The Stokes data $(s_1, \dots, s_n) $ corresponding to the solution $w$ of the tt*-Toda equations (as in Theorem \ref{asymptotic-data}) are defined as the coefficients of the characteristic polynomial of $\tilde{M}^{(0)}$, i.e.,
\begin{equation*}
	\det( xI_{n+1} - \tilde{M}^{(0)}) = x^{n+1} + \sum_{i=1}^n (-1)^i s_i x^{n+1-i} -1.
\end{equation*}
Explicitly, $s_i$ is the $i$-th symmetric function of 
\begin{equation}\label{asymptotic-stokes}
e^{(2m_0 + n) \frac{\pi \ii}{n+1}}, \,\,\, e^{(2m_1 + n-2) \frac{\pi \ii}{n+1}}, \,\,\,  \dots, \,\,\, e^{(2m_{n-1} -n+2) \frac{\pi \ii}{n+1}}, \,\,\, e^{(2m_n - n) \frac{\pi \ii}{n+1}}.
\end{equation}
\end{theo}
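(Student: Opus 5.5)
The statement has two parts. First, $\tilM$ has determinant one and its characteristic polynomial is "anti-palindromic up to signs", which justifies writing it in the given shape. Second, its eigenvalues are the listed exponentials. My approach is to show that $\tilM$ is conjugate to a matrix built from the monodromy of $\tilde\Psi^{(0)}$ and the cyclic symmetry. The eigenvalues can then be read off from the local behaviour of the ODE near a regular point of the isomonodromic family, namely the $x\to 0$ limit, where the asymptotic data $m_i$ enter through the residue term $-\tfrac1\zeta x w_x \to \tfrac1\zeta m$.

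\emph{Step 1 (algebraic identity).} From the cyclic symmetry $\tau(\hat\alpha(\zeta))=\hat\alpha(\omega\zeta)$, the function $d_{n+1}\tilde\Psi^{(0)}_k(\omega^{-1}\zeta)$ (suitably normalised) is again a solution. Comparing it with the formal solution \eqref{def-of-Psitilde} gives
\[
\tilde\Psi^{(0)}_{k+\frac{2}{n+1}}(e^{-2\pi\ii/(n+1)}\zeta)=d_{n+1}\,\tilde\Psi^{(0)}_k(\zeta)\,\Pi\cdot(\text{constant}).
\]
Iterating the Stokes factor relations \eqref{Stokes-factors} with \eqref{cyclicsymmetry-for-Stokesfactors} then shows that $\tilM=\tilQ_1\tilQ_{1+\frac1{n+1}}\Pi$ represents a ``$1/(n+1)$-th root'' of the monodromy. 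Concretely, $(\tilM)^{n+1}$ is conjugate to the formal monodromy times the product of all Stokes factors over a full turn, which equals the inverse of the actual monodromy of $\Psi$ around $\zeta=0$. So $\tilM$ is, up to conjugacy and a scalar factor $\omega^{\pm d}$ coming from $(d_{n+1})^{-d}$, the monodromy of a twisted local system on $\CC^*/\ZZ_{n+1}$. Its determinant is computed from $\det\tilQ_k=1$ (the Stokes factors are unipotent) and $\det\Pi=(-1)^n=1$ since $n$ is even. This gives the constant term $-1$.

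\emph{Step 2 (eigenvalues).} Since the Stokes data are independent of $x$ by isomonodromy, I can compute them in the limit $x\to0$. In the reduced equation $\Psi_\zeta=(-\zeta^{-2}W-\zeta^{-1}xw_x)\Psi$ with $xw_x\to -m$, the substitution $\zeta\mapsto$ the $(n+1)$-fold cover coordinate conjugates the equation, via $d_{n+1}$ and $e^{w}$, to one whose monodromy on the quotient is controlled by the exponents $m_i$ together with shifts from the diagonal gauge. The eigenvalues of the twisted monodromy then become $e^{2\pi\ii(m_i+\text{shift}_i)/(n+1)}$, with the shifts $(n-2i)/2$ coming from the factor $(d_{n+1})^{-d}$ and the ordering of $\omega^i$. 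Rather than performing this limit analytically, I would instead invoke the explicit formulas of \cite{GIL4} for $\tilQ_1,\tilQ_{1+\frac1{n+1}}$ in terms of $s_i$. Then I would verify that $\tilM$ is a companion-type matrix with characteristic polynomial as stated, and separately quote the identification of $s_i$ with symmetric functions of \eqref{asymptotic-stokes} from the asymptotic analysis of \cite{GIL4}, \cite{Mo2}.

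\emph{Main obstacle.} The main difficulty is the precise bookkeeping of the phases, that is, showing that the shift for $m_i$ is exactly $n-2i$ and not off by a power of $\omega$. I expect to settle this by checking the case $m=0$ directly, where the eigenvalues should be $e^{(n-2i)\pi\ii/(n+1)}$, the roots of $x^{n+1}+1\cdot(\pm)\dots$ up to sign. The anti-symmetry $m_i+m_{n-i}=0$ then confirms that the list is closed under inversion, which is consistent with the palindromic structure.
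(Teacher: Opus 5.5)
There is a genuine gap. The substantive content of the theorem is never established: that the eigenvalues of $\tilM$ are exactly the numbers in \eqref{asymptotic-stokes}, with this specific pairing of $m_i$ with the shift $n-2i$. Step~1 is fine as far as it goes, since $\det\tilQ_k=1$ and $\det\Pi=1$ for $n$ even give the constant term $-1$. Step~2, however, does not work as described. As $x\to 0$, the entries $e^{w_{i+1}-w_i}\sim x^{m_i-m_{i+1}}$ of $W$, and those of $x^2W^T$, in general tend to $0$ or $\infty$. So there is no limiting equation at fixed $\zeta$ without rescaling $\zeta$ and applying a diagonal gauge. Moreover, for $\hat\alpha$ both $\zeta=0$ and $\zeta=\infty$ are irregular, and the formal solution has trivial formal monodromy, so the $m_i$ are not local exponents of $\hat\alpha$ anywhere. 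Note also that $x=0$ is a singular point of the family, not a regular one. Your fallback, quoting from \cite{GIL4}, \cite{Mo2} the identification of $s_i$ with symmetric functions of \eqref{asymptotic-stokes}, is circular, because that identification is the statement. The paper itself only cites \cite{GIL4}. The ingredients it does record that make the computation possible are Theorem~\ref{Iwasawa-Stokes}, which lets you pass to $\hat\omega$ (whose point $\zeta=\infty$ is regular singular with residue $m$), and the cyclic symmetry \eqref{cyclic-symmetry-of-omegahat}.

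Your proposed fix for the ``main obstacle'' also fails. At $m=0$ all $m_i$ coincide, so that check only confirms that the eigenvalues are all the $(n+1)$-th roots of unity (the polynomial is $x^{n+1}-1$). It cannot tell which shift goes with which $m_i$. Knowing the spectrum $\{e^{2\pi\ii m_i}\}$ of $(\tilM)^{n+1}$ leaves exactly this pairing ambiguity, and the anti-symmetry $m_i+m_{n-i}=0$ does not remove it.

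A route that does work is the following. First, \eqref{cyclicsymmetry-for-Stokesfactors} gives $\Pi\,\tilQ_{1-\frac{2}{n+1}}\tilQ_{1-\frac{1}{n+1}}=\tilM$. Combining this with \eqref{cyclic-symmetry-of-omegahat} for $k=1$ yields $\tilde\chi^{(0)}_1(\zeta)=\omega^{-d}d_{n+1}\tilde\chi^{(0)}_1(\omega^{-1}\zeta)\tilM$. On first rows, this says $\tilM$ is the matrix of $\phi(f)(\zeta)=\omega^{d}f(\omega\zeta)$ on $V$. Next, rewrite \eqref{scalar-ODE-omegahat} as $\prod_{i=0}^n(\partial_\zeta-m_i+i)\,x_0=-\zeta^{-(n+1)}x_0$. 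This shows $\zeta=\infty$ is regular singular with Frobenius exponents $m_i-i$. Then $\phi$ acts on the Frobenius solution $\zeta^{m_i-i}h_i(\zeta^{-(n+1)})$ by $\omega^{d}e^{2\pi\ii(m_i-i)/(n+1)}=e^{(2m_i+n-2i)\pi\ii/(n+1)}$. In resonant cases the logarithmic terms only affect the Jordan form, not the eigenvalues. This gives the eigenvalues with the correct pairing, and $\sum_i m_i=0$ gives $\det\tilM=1$ independently.
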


\begin{rem}
When $n+1$ is even, some modifications are needed. 
First, we define $\tilde{\Psi}_f^{(0)}$ by $e^{-w} \Omega ( I + \Sigma_{k \geq 1} \psi^{(0)}_k \zeta^k ) e^{\frac{1}{\zeta} d_{n+1}} d_{n+1}^{\frac{1}{2}} $
The initial Stokes sector is taken as 
\begin{equation*}
		\Omega_1^{(0)} := \{ \zeta \in \widetilde{\CC}^* \,|\, -\tfrac{\pi}{2} - \tfrac{\pi}{n+1} < \arg \zeta < \tfrac{\pi}{2}  \}.
\end{equation*}
A further difference is that  $\tilde{M}^{(0)}$ is defined by  $\tilde{Q}^{(0)}_1\tilde{Q}^{(0)}_{1+\frac{1}{n+1}} \hat{\Pi}$ 
where $\hat{\Pi} = 
\left(
\begin{array}{cc}
	0& I_n \\ 
	-1 & 0
\end{array}
\right)$.
The Stokes data are the coefficients of the characteristic polynomial of $\tilde{M}^{(0)}$ without signs, i.e.,
\begin{equation*}
	\det(xI_{n+1} - \tilde{M}^{(0)}) = x^{n+1} + \sum_{i=1}^n s_i x^{n+1-i} +1.
\end{equation*}
Note that the Stokes data $s_i$ is the $i$-th symmetric function of the complex numbers (\ref{asymptotic-stokes}) whether $n+1$ is odd or even.
\end{rem}

\subsection{Holomorphic data}
We shall use a subsidiary connection form $\omega$ as in \cite{GIL4}.
To reconstruct $\alpha$ from $\omega$, one can use the Iwasawa factorization theorem for loop groups (see \cite{GIL4} and references therein).

Let 
\begin{equation*} 
	\omega  =  \frac{1}{\lambda}
	\begin{pmatrix}
		& & & z^{k_0} \\
		z^{k_1} & & & \\
		& \ddots & &  \\
		& & z^{k_n} &
	\end{pmatrix}dz
\end{equation*}
where $t = \tfrac{n+1}{N}z^{\frac{N}{n+1}}$ and $N =  \Sigma_{i=0}^n (k_i + 1)$.
We call $k_i$ the holomorphic data; we assume that they satisfy $k_i \geq -1$ and $k_i = k_{n+1-i}$.
The connection form $\omega$ satisfies 
\begin{itemize}
	\item Cyclic symmetry: $\tau(\omega (\lambda)) = \omega (e^{2\pi \ii / (n+1)} \lambda)$.
	\item Anti-symmetry: $\sigma(\omega (\lambda)) = \omega (-\lambda)$.
\end{itemize}
The relationship with the asymptotic data $m_i$ is given by
\begin{equation}\label{holomorphic-data-asymptotic-data}
 m_{i-1} - m_i + 1 = \tfrac{n+1}{N}(k_i+1).
\end{equation}
Note that if $m_i$ are fixed, the holomorphic data $k_i+1$ are only determined up to a multiplicative constant. 
However one obtains a one-to-one correspondence between holomorphic data and the asymptotic data for each fixed $N>0$.

As a summary, we have the following theorem.
\begin{theo}[\cite{GIL4}] \label{summaryoftheresult}
For each $N>0$, the global solutions $w$ of the tt*-Toda equations are in one-to-one correspondence with the asymptotic data, the Stokes data, and the holomorphic data. Moreover, the $i$-th Stokes data $s_i$ is the $i$-th symmetric function of the diagonal entries of $e^{(m + \rho) \frac{2 \pi \ii}{n+1}}$ where $\rho = \diag (\frac{n}{2}, \frac{n}{2}-1 , \cdots, -\frac{n}{2})$, and $k_i$ is given by
\begin{equation*}
 m_{i-1} - m_i + 1 = \tfrac{n+1}{N}(k_i+1).
\end{equation*}
\end{theo}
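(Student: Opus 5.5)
The plan is to prove the three correspondences one at a time: solutions $\leftrightarrow$ asymptotic data, then asymptotic data $\leftrightarrow$ holomorphic data for fixed $N$, then asymptotic data $\leftrightarrow$ Stokes data. The first is exactly Theorem \ref{asymptotic-data}: every global solution arises from a unique $m \in \mathcal{A}_{n+1}$, and every $m$ gives one. So it remains to show that, for fixed $N>0$, the maps $m \mapsto k$ and $m \mapsto s$ are injective. Their images are, respectively, the admissible holomorphic data with $\sum_i (k_i+1) = N$ and the set of Stokes data. The formula for $s_i$ is Theorem \ref{Thm-of-Stokes-data}, rewritten using $2m_j + n - 2j = 2(m_j+\rho_j)$.

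\textbf{Holomorphic data.} Given $m \in \mathcal{A}_{n+1}$, define $k$ by (\ref{holomorphic-data-asymptotic-data}), with indices taken cyclically so that $m_{-1}=m_n$.
\begin{itemize}
\item The inequalities $m_{i-1}-m_i+1 \geq 0$ give $k_i \geq -1$.
\item The antisymmetry $m_i+m_{n-i}=0$ gives $m_{i-1}-m_i = m_{n-i}-m_{n+1-i}$, hence $k_i = k_{n+1-i}$.
\item The sum $\sum_{i=0}^n (m_{i-1}-m_i+1)$ telescopes to $n+1$, so $\sum_i (k_i+1) = N$, consistent with the definition of $N$.
\end{itemize}
Conversely, admissible $k$ with $\sum_i(k_i+1)=N$ determine all differences $m_{i-1}-m_i$. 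This fixes $m$ up to an additive constant, and the constant is fixed by $\sum_i m_i = 0$, which follows from antisymmetry. The recovered $m$ lies in $\mathcal{A}_{n+1}$ because $k_i \geq -1$ and $k_i = k_{n+1-i}$. So for fixed $N$ this is a bijection.

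\textbf{Stokes data.} Set $\theta_j = (m_j+\rho_j)/(n+1)$, so that $s$ consists of the elementary symmetric functions of $e^{2\pi\ii\theta_j}$. Knowing $s$ is the same as knowing the characteristic polynomial, hence the multiset $\{e^{2\pi\ii\theta_j}\}$. The task is to recover the ordered real numbers $\theta_j$ from this multiset.
\begin{itemize}
\item We have $\theta_{j-1}-\theta_j = (m_{j-1}-m_j+1)/(n+1) \geq 0$, so $\theta_0 \geq \cdots \geq \theta_n$.
\item Antisymmetry gives $\theta_{n-j} = -\theta_j$.
\item The cyclic inequality $m_n - m_0 + 1 \geq 0$ gives $\theta_0 - \theta_n = (m_0-m_n+n)/(n+1) \leq 1$. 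Combined with $\theta_n=-\theta_0$, this puts every $\theta_j$ in $[-\tfrac12,\tfrac12]$.
\end{itemize}
On this interval, $\theta \mapsto e^{2\pi\ii\theta}$ is injective except that $\pm\tfrac12$ both map to $-1$. By antisymmetry, the number of $j$ with $\theta_j = \tfrac12$ equals the number with $\theta_j=-\tfrac12$. So the multiplicity of the eigenvalue $-1$ determines how to split those values between the two endpoints. Thus the multiset $\{\theta_j\}$ is recovered, and sorting it in decreasing order recovers $(\theta_j)$, hence $m$. Injectivity follows, and the image is by definition the set of Stokes data of global solutions.

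The main obstacle is this last injectivity step: passing from eigenvalues on the unit circle back to real exponents. Its proof relies on the polytope inequalities, including the cyclic one $m_n-m_0+1\ge0$, to confine the $\theta_j$ to a window of length at most $1$. The eigenvalue $-1$ at the boundary of that window needs the separate counting argument above; everything else is linear algebra.
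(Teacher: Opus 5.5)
Your proof is correct. The paper gives no argument of its own: the statement is quoted from \cite{GIL4} as a summary of Theorem~\ref{asymptotic-data}, Theorem~\ref{Thm-of-Stokes-data} and the relation \eqref{holomorphic-data-asymptotic-data}, together with the remark that $m$ determines $k_i+1$ only up to scale. Your three-step decomposition is therefore the one implicit in the text; what you add is an actual verification of the two injectivity claims.

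The holomorphic-data part is routine, as you say. The only line you leave implicit in the converse is antisymmetry of the reconstructed $m$: the vector $m'_i:=-m_{n-i}$ has the same cyclic differences and the same sum as $m$, hence equals it.

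For the Stokes data, the framework the paper recalls immediately afterwards (from \cite{GH1}) suggests a Lie-theoretic justification. The map $m\mapsto\theta=(m+\rho)/(n+1)$ sends $\mathcal{A}_{n+1}$ into the closed fundamental Weyl alcove $\mathfrak{A}_{n+1}$. That alcove is a strict fundamental domain for the affine Weyl group, so $\theta\mapsto e^{2\pi\ii\theta}$ is injective from it to $T/W$ for $SU(n+1)$, i.e.\ to conjugacy classes, and these are detected by characteristic polynomials. Your eigenvalue-lifting argument proves exactly this, restricted to the antisymmetric slice: the cyclic inequality confines $\theta$ to a window of length at most $1$, and antisymmetry both centres the window at $0$ and resolves the $\pm\tfrac12$ ambiguity.

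The Lie-theoretic route does not need antisymmetry and explains why the alcove is the natural parameter space. Yours is self-contained and uses nothing beyond the explicit formula of Theorem~\ref{Thm-of-Stokes-data}.
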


An equivalent result was also obtained by T. Mochizuki in \cite{Mo1}, \cite{Mo2}.

\begin{rem}
As $m \in \mathcal{A}_{n+1}$, the $k_i$ and $s_i$ are restricted by various conditions. In particular the condition $k_i \geq -1$ is equivalent to $m \in \mathcal{A}_{n+1}$.
\end{rem}

\begin{theo} [\cite{GH1}] \label{M-diagonal}
Assume that all $k_i > -1$. Then $\tilde{M}^{(0)}$ is conjugate to the diagonal matrix $e^{\frac{2\pi \ii}{n+1} (m + \rho)}$.
\end{theo}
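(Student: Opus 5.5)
Since $\tilde{M}^{(0)}$ is a complex $(n+1)\times(n+1)$ matrix, it is diagonalizable with eigenvalues $e^{\frac{2\pi\ii}{n+1}(m_i+\rho_i)}$ as soon as these $n+1$ numbers are pairwise distinct. So the plan is: first pin down the characteristic polynomial of $\tilde{M}^{(0)}$, and then prove distinctness of its roots under the hypothesis $k_i>-1$.

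\emph{Characteristic polynomial.} By Theorem \ref{Thm-of-Stokes-data} (and the remark for $n+1$ even), the characteristic polynomial of $\tilde{M}^{(0)}$ is determined by the Stokes data $s_1,\dots,s_n$. Theorem \ref{summaryoftheresult} identifies $s_i$ with the $i$-th elementary symmetric function of the diagonal entries of $e^{(m+\rho)\frac{2\pi\ii}{n+1}}$. The constant term also matches. In the odd case it is $(-1)^{n+1}\cdot(-1)$ times the product of the roots. The product of the roots is $e^{\frac{2\pi\ii}{n+1}\sum(m_i+\rho_i)}=1$, because $\sum m_i=0$ by $m_i+m_{n-i}=0$ and $\sum\rho_i=0$. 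In the even case a similar sign check is needed. Hence
\begin{equation*}
\det(xI_{n+1}-\tilde{M}^{(0)})=\prod_{i=0}^n\bigl(x-e^{\frac{2\pi\ii}{n+1}(m_i+\rho_i)}\bigr).
\end{equation*}

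\emph{Distinct eigenvalues.} Write $\mu_i=m_i+\rho_i=m_i+\frac n2-i$. Then
\begin{equation*}
\mu_{i-1}-\mu_i=m_{i-1}-m_i+1=\tfrac{n+1}{N}(k_i+1),
\end{equation*}
and each such difference is strictly positive because $k_i>-1$. So $\mu_0>\mu_1>\dots>\mu_n$. Moreover
\begin{equation*}
\mu_0-\mu_n=\sum_{i=1}^n(\mu_{i-1}-\mu_i)=\tfrac{n+1}{N}\sum_{i=1}^n(k_i+1)=\tfrac{n+1}{N}\bigl(N-(k_0+1)\bigr)<n+1,
\end{equation*}
where the inequality again uses $k_0+1>0$. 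Thus all $\mu_i$ lie in an interval of length strictly less than $n+1$. Their pairwise differences therefore lie in $(0,n+1)$ and are never multiples of $n+1$. Consequently the numbers $e^{\frac{2\pi\ii}{n+1}\mu_i}$ are pairwise distinct.

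\emph{Conclusion.} A matrix whose $n+1$ eigenvalues are distinct is diagonalizable, so $\tilde{M}^{(0)}$ is conjugate to $e^{\frac{2\pi\ii}{n+1}(m+\rho)}$. For $n+1$ even, I expect the only extra step to be a careful sign bookkeeping with $\hat\Pi$ and $d_{n+1}^{1/2}$, to confirm the constant term $+1$ of the characteristic polynomial.

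The main obstacle is therefore not conceptual. It is making sure that the characteristic polynomial really is $\prod_i(x-e^{\frac{2\pi\ii}{n+1}\mu_i})$ in both parity cases, including the constant term. The strictness of $k_i>-1$ is exactly what excludes repeated eigenvalues. When some $k_i=-1$, repeated eigenvalues can occur and $\tilde{M}^{(0)}$ may fail to be diagonalizable.
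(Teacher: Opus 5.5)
For $n+1$ odd your argument is correct and complete. Write $\lambda_j=e^{\frac{2\pi\ii}{n+1}\mu_j}$. Theorem~\ref{Thm-of-Stokes-data}, the identity $s_i=e_i(\lambda_0,\dots,\lambda_n)$ and $\prod_j\lambda_j=1$ give $\det(xI-\tilde{M}^{(0)})=\prod_j(x-\lambda_j)$. Your sentence on the constant term is garbled, but the point it needs, $-1=(-1)^{n+1}\prod_j\lambda_j$, is right. The strict inequalities $k_i>-1$ then make the $\mu_j$ strictly decreasing in an interval of length $(n+1)-\frac{n+1}{N}(k_0+1)<n+1$, so the eigenvalues are distinct.

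The paper gives no proof of its own and only cites \cite{GH1}, where the argument is Lie-theoretic. There, $\tilde{M}^{(0)}$ is shown to be a regular element; in the odd case it is conjugate by $F$ to the companion-type matrix $M_F$ of \eqref{canonical-M}, which has a cyclic vector. Steinberg's theorem says regular conjugacy classes are determined by the characteristic polynomial, and the open alcove $\{k_i>-1\}$ corresponds to regular semisimple elements. You bypass regularity entirely with ``distinct eigenvalues $\Rightarrow$ diagonalizable'', which is more elementary and suffices for the statement. What the regularity viewpoint adds is the boundary $k_i=-1$. There eigenvalues coincide and, since $\tilde{M}^{(0)}$ is cyclic, each repeated eigenvalue carries a single Jordan block. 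So $\tilde{M}^{(0)}$ is never diagonalizable there, not merely ``may fail to be''.

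The even case is where your proposal has a genuine gap. The step you defer is not a constant-term check, and it does not come out as you expect. With the paper's even-case conventions (the Remark following Theorem~\ref{Thm-of-Stokes-data}) one has $\det(xI-\tilde{M}^{(0)})=x^{n+1}+\sum_{i=1}^n s_i x^{n+1-i}+1=\sum_{i=0}^{n+1}e_i(\lambda)\,x^{n+1-i}=\prod_j(x+\lambda_j)$. The constant term and all even-degree coefficients agree with $\prod_j(x-\lambda_j)$. Every odd-degree coefficient, however, has the opposite sign, so the eigenvalues are $-\lambda_j$, not $\lambda_j$.

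For instance, take $n+1=2$ at the $A_2$ point: $k_0=1$, $k_1=0$, $m=-\frac{1}{3}\rho$, $s_1=1$, which satisfies the hypothesis. One gets $x^2+x+1$, whereas $e^{\pi\ii(m+\rho)}=\diag(e^{\pi\ii/3},e^{-\pi\ii/3})$ has characteristic polynomial $x^2-x+1$. Your distinctness argument is insensitive to an overall sign, so it still proves $\tilde{M}^{(0)}$ is diagonalizable, but conjugate to $-e^{\frac{2\pi\ii}{n+1}(m+\rho)}$. To get the statement as written you must either restrict to $n+1$ odd, which is the only case the paper actually uses (Section~\ref{subsection-Stokes-data} and Section~4). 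Alternatively, normalize $\tilde{M}^{(0)}$ differently in the even case, e.g.\ multiply it by the central element $-I\in SL_{n+1}\mathbb{C}$.
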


We shall mention some examples related to geometry.
The following Lie-theoretic observations are due to \cite{GH1}.

Let $\mathfrak{g} = \mathfrak{sl}_{n+1}\CC$. We choose the standard root system as follows:
\begin{itemize}
	\item Cartan subalgebra: $\mathfrak{h} = \{ \diag(h_0, \cdots, h_n) \, | \, h_i \in \CC, \, \sum_{i=0}^n h_i = 0\}$,
	\item roots: $x_i - x_j$ ($0 \leq i \neq j \leq n$) where $x_i : \diag(h_0, \cdots, h_n) \mapsto h_i$, and
	\item simple roots: $\alpha_i = x_{i-1} - x_i$ ($1 \leq i \leq n$). 
\end{itemize}
By using the Killing form $B(X,Y) = \tr (XY)$, we identify $\mathfrak{h}$ and $\mathfrak{h}^*$.
Then we identify the basic weights with $\epsilon_1, \cdots \epsilon_n \in \mathfrak{h}$ where $\alpha_i (\epsilon_j) = \delta_{ij}$.

In this setting, we have $\rho = \epsilon_1 + \cdots + \epsilon_n$, and then
the set of the asymptotic data $\mathcal{A}_{n+1}$ is identified with the fundamental Weyl alcove by 
\begin{equation*}
	\mathcal{A}_{n+1} \ni m \mapsto \tfrac{1}{n+1}( m + \rho) \in \mathfrak{A}_{n+1} = \left\{ \sum_{i=1}^n v_i \epsilon_i \, \middle| \, v_i \geq 0, \,\, \Sigma_{i=1}^n v_i \leq 1 \right\}.
\end{equation*}

\begin{defi} [\cite{HK}]
We call the $1$-parameter family $\lambda \rho \subset \mathcal{A}_{n+1}$ ($\lambda \in [-1, \tfrac{1}{n}]$) the $\rho$-line.
\end{defi}

Several examples of global solutions corresponding to integer Stokes data $s \in \ZZ^n$ (integer Stokes solutions, briefly) are studied in \cite{GL}.
For certain values, such solutions have geometric origins. For example, when the holomorphic data $k_0=0$, $k_1 = \cdots = k_n = -1$, the connection form $\omega$ can be regarded as the Dubrovin connection of the quantum cohomology of the projective space $QH^*(\mathbb{C}P^n)$.
In the case $m=-\rho$, the Dubrovin connection is also known as the Frenkel-Gross connection. See \cite{GH2} and \cite{FG}.

\begin{exa} \label{integer-Stokes-solution-example}
The following are examples of ``geometric solutions" on the $\rho$-line.
\begin{itemize}
	\item $QH^*(\mathbb{C}P^n)$: asymptotic data $m = -\rho$, holomorphic data $k_0=0$, $k_1 = \cdots = k_n = -1$,
		 Stokes data $s_i = \tiny \begin{pmatrix} n+1 \\ i \end{pmatrix}$. 
		 
	\item $A_{n+1}$-singularity: asymptotic data $m = -\tfrac{1}{n+2}\rho$, holomorphic data $k_0=1$, $k_1 = \cdots = k_n = 0$,
		 Stokes data $s_i = 1$. 
		 
	\item $QH^*(\mathbb{P}^{1,n})$: asymptotic data $m = \tfrac{1}{n}\rho$, holomorphic data $k_0=-1$, $k_1 = \cdots = k_n = \frac{1-n}{n}$,
		 Stokes data $s_1 = -1$, $s_2 = \cdots = s_{n-1}=0$, $s_n = -1$.
		 
	\item Trivial solution: all data $=0$. 
\end{itemize}
where $\rho = \diag(\tfrac{n}{2}, \tfrac{n}{2}-1, \cdots , -\tfrac{n}{2})$.
\end{exa}

It is of interest that these ``geometric solutions" all lie on the same line.
Furthermore, a converse is true:

\begin{theo}[\cite{HK}]
Assume $n + 1 > 3$ and $m \in \lambda \rho$.
Then the corresponding Stokes data $s = (s_1, \cdots, s_n) \in \ZZ^n$ if and only if 
$\lambda = -1, -\frac{1}{n+2}, 0,  \frac{1}{n}$. In other words, there are only the four integer Stokes solutions on the $\rho$-line.
They are the solutions given in Example \ref{integer-Stokes-solution-example}.
\end{theo}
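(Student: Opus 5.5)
The plan is to reduce integrality of the Stokes data to a statement about roots of unity, and then to a combinatorial problem on a cyclic group.

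\emph{Eigenvalues.} By Theorem \ref{summaryoftheresult}, $s_i$ is the $i$-th elementary symmetric function of the diagonal entries of $e^{\frac{2\pi\ii}{n+1}(m+\rho)}$. On the $\rho$-line we have $m+\rho=(\lambda+1)\rho$. Put $\mu=\frac{\lambda+1}{n+1}$; the range $\lambda\in[-1,\tfrac1n]$ becomes $\mu\in[0,\tfrac1n]$. Put $q=e^{2\pi\ii\mu}$. The eigenvalues are then the centred geometric progression $q^{n/2},q^{n/2-1},\dots,q^{-n/2}$, so
\[
x^{n+1}+\textstyle\sum_i(\pm1)s_ix^{n+1-i}\pm1=\prod_{j=0}^n\bigl(x-q^{n/2-j}\bigr).
\]
In particular $s_1=\sin((n+1)\pi\mu)/\sin(\pi\mu)$, and in general the $s_i$ are centred Gaussian binomials in $q$.

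\emph{The four values are integral.} Take the converse direction first, as a direct check on this product.
\begin{itemize}
\item $\mu=0$ ($\lambda=-1$): all eigenvalues equal $1$, so $s_i=\binom{n+1}{i}$.
\item $\mu=\tfrac1{n+2}$ ($\lambda=-\tfrac1{n+2}$): the eigenvalues are the $(n+2)$-th roots of unity (suitably rotated) except one, so the polynomial is $(x^{n+2}-1)/(x-1)$ up to the sign conventions, and $s_i=1$.
\item $\mu=\tfrac1{n+1}$ ($\lambda=0$): the polynomial is $x^{n+1}\mp1$, so all $s_i=0$.
\item $\mu=\tfrac1n$ ($\lambda=\tfrac1n$): the eigenvalues are all $n$-th roots plus one repeated root, giving $(x^n\mp1)(x\mp1)$, hence $s_1=s_n=-1$ and the others vanish.
\end{itemize}
This recovers Example \ref{integer-Stokes-solution-example}.

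\emph{Integral Stokes data force $\mu$ rational.} Suppose $s\in\ZZ^n$. The characteristic polynomial is monic with integer coefficients, constant term $\pm1$, and all roots on the unit circle, and the same holds for each of its Galois conjugates. By Kronecker's theorem every root is a root of unity. The ratio of consecutive eigenvalues is $q$, so $q$ is a root of unity and $\mu=p/N$ is rational. It follows that the multiset $\{q^{n/2-j}\}$ is stable under $\mathrm{Gal}(\mathbb{Q}(\zeta_{2N})/\mathbb{Q})$; the half-integer exponents for $n$ odd are handled by working with $\zeta_{2N}$. Concretely, $s_1$ is fixed by each $\sigma_u\colon e^{\pi\ii\mu}\mapsto e^{\pi\ii u\mu}$, which gives
\[
\frac{\sin((n+1)\pi u\mu)}{\sin(\pi u\mu)}=\frac{\sin((n+1)\pi\mu)}{\sin(\pi\mu)}\quad\text{for all }u\text{ coprime to }2N .
\]

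\emph{Main obstacle: excluding all other rational $\mu$.} First, $\mu\le\tfrac1n$ forces $p/N\le 1/n$. The set of exponents is an arc of $n+1$ consecutive residues, centred at $0$, in $\ZZ/N$ (or $\ZZ/2N$), and it must be a union of full orbits under multiplication by units. For a proper arc of length $n+1$ with $n+1\ge4$ this should force one of the following:
\begin{itemize}
\item $p=0$;
\item $p=1$ with $N\in\{n,n+1,n+2\}$, where the arc is everything, everything minus one point, or everything plus one repeat.
\end{itemize}
To exclude the remaining cases I would first use $s_1$: on $(0,\tfrac1n]$ it decreases from $n+1$ to $-1$. I would then apply a unit $u$, for instance $u=-1+N$ or $u$ near $N/2$ coprime to $N$, which moves a point of the arc to a point well outside it, while the exponent multiset must be preserved. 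This contradicts stability unless the complement of the arc has at most one point. The small exceptional cases, where $N$ is close to $n$ or $n+1\le 3$, need separate inspection, and this is where the hypothesis $n+1>3$ enters. I expect this orbit-versus-arc lemma, including careful bookkeeping of the sign and half-integer conventions in the odd and even cases, to be the hardest step.
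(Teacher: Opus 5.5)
The paper gives no proof of this statement; it is quoted from \cite{HK}, so your proposal has to stand as a self-contained argument. The reduction steps are sound: on the $\rho$-line the eigenvalues form the progression $q^{n/2-j}$, Kronecker's theorem forces $\mu=p/N$ to be rational, and $s\in\ZZ^n$ is equivalent to Galois stability of the root multiset. The four values also check out. One small slip: at $\mu=\frac1{n+2}$ the omitted root is $-1$, so the polynomial is $(x^{n+2}-(-1)^n)/(x+1)$, and that is what gives $s_i=1$.

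The genuine gap is the ``only if'' direction, which is the whole content of the theorem. Your orbit-versus-arc lemma is only asserted (``should force'', ``I would''), and the tools you indicate do not work as stated.
\begin{itemize}
\item In $\ZZ/N$ the unit $N-1$ is just $-1$. It maps the centred arc to itself, so it can never give a contradiction.
\item A unit near $N/2$ lying in the complement need not exist. For $n$ even and $N=n+4\equiv 2\pmod 4$, the complement of $\{-n/2,\dots,n/2\}$ is $\{N/2-1,N/2,N/2+1\}$, and none of these is prime to $N$. This is exactly where $n+1>3$ is needed: for $n=2$, $N=6$ is a genuine extra solution ($s=2$, the Hurwitz space case in the paper's remark).
\item The exponent set is $p\cdot\{-n/2,\dots,n/2\}$, not an arc. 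Rescaling by the unit $p^{-1}$ shows that Galois stability depends only on $N$. So stability can never force $p=1$; that must come from $p/N\le\frac1n$.
\end{itemize}

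The gap can be closed as follows. Let $E$ be the rescaled exponent set. For $N\ge n+3$ it suffices to exhibit a unit $u$ and some $a\in E$ with $ua\notin E$; each $u$ below is prime to the modulus by a one-line gcd check.

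\emph{Case $n$ even.} Here $E=\{-n/2,\dots,n/2\}\subset\ZZ/N$. Take $a=1$ and:
\begin{itemize}
\item $u=(N-1)/2$ if $N$ is odd;
\item $u=N/2-1$ if $4\mid N$;
\item $u=N/2-2$ if $N\equiv2\pmod 4$ and $N\ge n+6$.
\end{itemize}
If $N=n+4\equiv 2\pmod 4$, then $n\ge 6$. Take $a=2$ and let $u$ be the odd one of $(N\mp2)/4$; then $2u\equiv N/2\mp1\notin E$.

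\emph{Case $n$ odd, $p$ odd.} Here $E=\{\pm1,\pm3,\dots,\pm n\}\subset\ZZ/2N$. The unit $N-1$ (for $N$ even) or $N-2$ (for $N$ odd, which forces $N\ge n+4$) is odd and lies strictly between $n$ and $2N-n$, so $a=1$ works. Note that $N-1\not\equiv-1$ in $\ZZ/2N$; this is the version of your idea that does work.

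\emph{Case $n$ odd, $p$ even.} Then $N$ is odd and $N\ge 2n$. The element $2$ is a unit of $\ZZ/N$ and is not congruent to any odd $m$ with $|m|\le n$, so $a=1$, $u=2$ works.

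Hence either $\mu=0$ or $N\in\{n,n+1,n+2\}$. In the latter case $p\le N/n<2$ forces $p=1$, which gives exactly $\lambda\in\{-1,-\frac1{n+2},0,\frac1n\}$.
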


\begin{rem}
When $n + 1 = 2$ and $3$, the set of the asymptotic data $\mathcal{A}_{n+1}$ itself can be regarded as the $\rho$-line because we only have $m_0$, and the range of $m_0$ is $[-\frac{1}{2}, \frac{1}{2}]$ and $[-1, \frac{1}{2}]$ respectively.
In these cases it turns out that there are $5$ integer Stokes solutions.

When $n+1=2$, for the five integer values $s = \pm2, \pm 1, 0$ we have only three inequivalent solutions which correspond to $QH^*(\mathbb{C}P^1)$,
the $A_2$-singularity, and the trivial solution, respectively. 
In \cite{HK}, the authors studied the correspondence between the equivalence classes of global solutions with respect to the symmetry and the braid group orbits of the Stokes data.

When $n + 1 = 3$, for the five integer values $s = 3, 2, 1, 0, -1$, there are five solutions corresponding to $QH^*(\mathbb{C}P^2)$, the Hurwitz space $\mathcal{H}_{1,2}$, the $A_3$-singularity, the trivial solution, and $QH^*(\mathbb{P}^{1,2})$, respectively.

Regarding such geometric solutions, one can find further information in \cite{CV2} and \cite{HL}.

\end{rem}

\subsection{Stokes data of the Dubrovin connection and the tt*-Toda equations}
Using the radial condition in the same way that was done for $\alpha$, we obtain another isomonodromy connection $d + \hat{\omega}$
\begin{equation}
	\hat{\omega}(\zeta) = \left[ -\tfrac{1}{\zeta^2} \Pi^T  + \tfrac{1}{\zeta} m  \right] d \zeta
\end{equation}
 with an irregular singular point at $\zeta = 0$ and a regular singular point at $\zeta = \infty$ where $\zeta = \lambda /t$. 
 This connection form satisfies 
 \begin{itemize}
 	\item Cyclic symmetry: $\tau(\hat{\omega}(\zeta)) = \hat{\omega}(e^{2 \pi \ii / (n+1)} \zeta)$,
	\item Anti-symmetry: $\sigma(\hat{\omega}(\zeta)) = \hat{\omega}(-\zeta)$,
	\item $\theta$-reality: $ \theta(\hat{\omega}(\zeta)) = \hat{\omega}(\bar{\zeta})$.
 \end{itemize}
 Let us consider the ODE for flat sections of the dual connection to $d + \hat{\omega}$:
\begin{equation} \label{matrix-ODE-omegahat}
	\chi_{\zeta} = \left( -\frac{1}{\zeta^2} \Pi + \frac{1}{\zeta} m  \right) \chi,
\end{equation}
where $\Pi$ denotes the matrix defined in Theorem \ref{def-of-QQPi}, and $m$ is defined in (\ref{def-of-w-and-W}).
This ODE has an irregular singular point at $\zeta = 0$ and a regular singular point at $\zeta = \infty$.

As in the case of $\tilde{\Psi}_f^{(0)}$ in Section \ref{subsection-Stokes-data}, we can choose the formal solution $\tilde{\chi}^{(0)}_f $
 \begin{equation*}
 	\tilde{\chi}^{(0)}_f = \Omega \left( I + \sum_{k\geq 1} x^{(0)}_k \zeta^k \right) e^{\frac{1}{\zeta} d_{n+1}}(d_{n+1})^{-d},
 \end{equation*}
 where $\Omega$ is as in (\ref{def-of-Psitilde}).
There exists a canonical solution $\tilde{\chi}^{(0)}_k$ which is holomorphic on the Stokes sectors $\Omega^{(0)}_k$ and is asymptotic to $\tilde{\chi}^{(0)}_f$ as $\zeta \to 0$, as in Section \ref{subsection-Stokes-data}.
Then we define the Stokes matrices and the Stokes factors with respect to the solutions $\tilde{\chi}^{(0)}_k$ in the same manner.

\begin{theo}\cite{GIL4} \label{Iwasawa-Stokes}
The Stokes factors of $\hat{\alpha}$ at $\zeta = 0$ are the same as the Stokes factors of $\hat{\omega}$ at $\zeta = 0$.
\end{theo}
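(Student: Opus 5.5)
The strategy is to connect the two ODEs by a gauge transformation that is holomorphic in $\zeta$ near $0$ and does not change the exponential part of the formal solutions. Such a gauge leaves the canonical solutions on each sector intact, up to the same left factor. The Stokes factors are obtained by comparing canonical solutions on overlapping sectors, so they cancel this left factor and coincide.

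First, produce the gauge. By the Iwasawa factorization set up in the previous subsection, $\alpha$ is obtained from $\omega$ by a loop-group gauge transformation. Take $F$ to be the complex extended solution with $F^{-1}dF=\omega$, normalized at $z=0$, and Iwasawa factorize $F = F_{\RR}B$. Here $F_{\RR}$ satisfies $F_{\RR}^{-1}dF_{\RR}=\alpha$ and $B$ is holomorphic in $\lambda$ near $\lambda=0$. Next impose the radial condition and change variables to $\zeta=\lambda/t$. The compatibility of this construction with $\tau$, $\sigma$ and the radial reduction should give a relation
\[
\Psi = G(\zeta)\,\chi,
\]
relating flat sections of the dual connections of $d+\hat\alpha$ and $d+\hat\omega$. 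In this relation $G(\zeta)=G_0+G_1\zeta+\cdots$ is holomorphic and invertible at $\zeta=0$, with $x$ held fixed. This is the point where the $1/\zeta^2$ terms must match: $W$ should be conjugate to $\Pi$ by the leading term $G_0$, which is essentially $e^{-w}$ up to a diagonal factor.

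Second, compare the formal solutions. Apply $G$ to $\tilde\chi^{(0)}_f$. This gives
\[
G(\zeta)\,\Omega\bigl(I+\textstyle\sum x_k^{(0)}\zeta^k\bigr)e^{\frac1\zeta d_{n+1}}(d_{n+1})^{-d},
\]
which is a formal solution of the $\hat\alpha$ system with the same exponential factor and the same $(d_{n+1})^{-d}$. Since $G_0\Omega$ equals $e^{-w}\Omega$ up to a constant diagonal normalization, uniqueness of the normalized formal solution at an irregular singularity with distinct eigenvalues shows that this is $\tilde\Psi^{(0)}_f$, possibly after right multiplication by a constant diagonal matrix $D$ that commutes with the exponential part. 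By the uniqueness of canonical solutions on each Stokes sector $\Omega^{(0)}_k$, we then get
\[
\tilde\Psi^{(0)}_k = G\,\tilde\chi^{(0)}_k\, D \quad\text{for all } k.
\]
Substituting this into the definitions (\ref{Stokes-factors}) gives
\[
\tilde Q^{(0)}_k(\hat\alpha) = D^{-1}\,\tilde Q^{(0)}_k(\hat\omega)\,D.
\]

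Third, show that $D=I$, or at least that conjugation by $D$ is trivial. The symmetries help here. Both systems satisfy the cyclic symmetry and anti-symmetry, and both formal solutions are normalized by the same $\Omega$ and $d_{n+1}$ factors. Cyclic symmetry forces $D$ to be compatible with $\Pi$, so $D$ is scalar, and a scalar cancels in the conjugation.

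The main obstacle is the first step. One must verify that the Iwasawa gauge, after the radial reduction $\zeta=\lambda/t$, really is holomorphic and invertible at $\zeta=0$ in the $\zeta$-variable, rather than only in $\lambda$ at fixed $t$. One must also check that it introduces no extra $\zeta^{-1}$ term, which would shift the formal monodromy exponent. I would first try to write $B$ explicitly as a power series in $\lambda$ whose coefficients are homogeneous in $t$, and use the radial condition to express it as a function of $\zeta$ alone times a diagonal factor in $x$.
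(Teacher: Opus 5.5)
The paper gives no proof of this statement; it is quoted from \cite{GIL4}. There it is obtained from the Iwasawa factorization essentially as you outline. The positive-loop factor, rewritten in $\zeta=\lambda/t$ at fixed $x$, is a gauge $G$ between the two $\zeta$-systems that is holomorphic and invertible at $\zeta=0$. It therefore carries canonical solutions to canonical solutions and cancels out of the Stokes factors.

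\textbf{The gap is your third step.} Cyclic symmetry cannot force $D$ to be scalar. Since $d_{n+1}\Pi=\omega^{-1}\Pi d_{n+1}$, the relation \eqref{cyclicsymmetry-for-Stokesfactors} is invariant under $\tilQ_k\mapsto d_{n+1}^{-1}\tilQ_k d_{n+1}$. Yet this conjugation multiplies the $(i,j)$ entry by $\omega^{j-i}\neq 1$ for $i\neq j$. Adding anti-symmetry only tells you that $D^2$ commutes with the Stokes factors, which settles the question only for generic $s_i$.

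What pins down $D$ is information about $G$ itself, most simply at leading order:
\begin{enumerate}[(i)]
\item Your factor $B$ lies in the $\tau$-twisted loop group, so $B|_{\lambda=0}$ commutes with $d_{n+1}$ and is diagonal. Hence $G_0$ is diagonal.
\item Let $A_{\hat\alpha}$ and $A_{\hat\omega}$ denote the coefficient matrices of the two $\zeta$-ODEs. The $\zeta^{-2}$ coefficient of $G_\zeta=A_{\hat\alpha}G-GA_{\hat\omega}$ gives $WG_0=G_0\Pi$.
\item Since $W=e^{-w}\Pi e^{w}$, the diagonal matrix $e^{w}G_0$ commutes with $\Pi$, so it is a scalar $c$.
\item Then $G\tilde\chi^{(0)}_f$ and $c\,\tilde\Psi^{(0)}_f$ are formal solutions of the $\hat\alpha$-system, both of the form $(\text{series})\cdot e^{\frac1\zeta d_{n+1}}(d_{n+1})^{-d}$, with the same leading coefficient $c\,e^{-w}\Omega$.
\item Any two such formal solutions differ by a constant diagonal right factor, so these two coincide. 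Thus $D=cI$ already in your second step, and your third step is not needed.
\end{enumerate}
Alternatively, you can apply the equivariance of $G$ under $\zeta\mapsto\omega\zeta$ to $G\tilde\chi^{(0)}_f=\tilde\Psi^{(0)}_fD$, which gives $\Pi^{-1}D\Pi=D$. But that is a property of the gauge, not of the two systems, and you did not invoke it.

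\textbf{Smaller points.}
\begin{enumerate}[(i)]
\item Your worry about an extra $\zeta^{-1}$ term is unnecessary: a gauge holomorphic and invertible at $\zeta=0$ cannot change the formal type.
\item So the real content of your first step is holomorphy and invertibility of $G$ in $\zeta$ at fixed $x$. This follows from $B$ being holomorphic for $|\lambda|<1$, together with the homogeneity implied by the radial condition. That homogeneity holds for the holomorphic frame, its real factor and $B$ under $(t,\lambda)\mapsto(e^{\ii\theta}t,e^{\ii\theta}\lambda)$, up to constant diagonal factors.
\item Normalizing the holomorphic frame at $z=0$ requires all $k_i>-1$. On the boundary of $\mathcal{A}_{n+1}$ this fails. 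An example is $QH^*(\mathbb{C}P^n)$ with $k_1=\cdots=k_n=-1$; for $n=2$ this is the case $s=3$ used in the paper. There $\omega$ has a pole at $z=0$, so that case needs a different normalization or a separate limiting argument.
\end{enumerate}
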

By this theorem the Stokes matrices and factors with respect to the solutions $\tilde{\chi}^{(0)}_k$ may also be denoted by $\tilS_k$ and $\tilQ_{k}$.

Stokes matrices and their braid group orbits for the ODE of flat sections of $d + \hat{\omega}$ were studied by Dubrovin in \cite{Du2} and Guzzetti in \cite{Gu} in the case that arose from the quantum cohomology of the projective space $QH^{*}(\mathbb{C}P^n)$. 
As we saw in Example \ref{integer-Stokes-solution-example}, the tt*-Toda equations have a solution which corresponds to $QH^{*}(\mathbb{C}P^n)$. 
In this paper, we study the properties of the Stokes data using both connection forms $\hat{\alpha}$ and $\hat{\omega}$.

\section{Braid group action and distinguished bases}\label{section-braid}

With the motivation of the previous sections, we shall study in the remainder of this article the braid group action in the context of the tt* equations.
As stated in Theorem \ref{Iwasawa-Stokes}, the Stokes data for $\hat{\alpha}$ at $\zeta = 0$ can be calculated using $\hat{\omega}$.
However, they form a proper subset because of the tt* restriction $m \in \mathcal{A}_{n+1}$.
This is one reason why we do not simply quote the results of \cite{Du2} and \cite{Gu}.
The second reason is that we consider all solutions of the tt*-Toda equations, not just the solution that arises from the quantum cohomology of the projective space $QH^*(\mathbb{C}P^n)$.
A third reason is that we wish to study the braid group action from the perspective of distinguished bases following the framework of [HL], which will be reviewed next.

We shall give some basic definitions following \cite{HL}.
Let $H_{\ZZ}$ be a free $\ZZ$-module of rank $n+1$ and $L$ a nondegenerate bilinear form on $H_{\ZZ}$.
\begin{defi}
The pair $(H_{\ZZ},L)$ is called a unimodular bilinear lattice if there exists a $\ZZ$-basis $\underline{e} = (e_1, \cdots, e_{n+1})$ of $H_{\ZZ}$ such that $\det L(\ee, \ee)=1$.
Let $T_{n+1}(\ZZ)$ be the set of all upper triangular matrices with $1$'s on the diagonal, i.e.,
\begin{equation*}
	T_{n+1}(\ZZ) := \{ S \in M_{n+1}(\ZZ)  \,| \, S_{ij}=0 \,\, \text{for $i>j$}, \,\, S_{ii}=1 \}.
\end{equation*}
A basis $\underline{e}$ is called triangular if $L(\underline{e}, \underline{e})^T \in T_{n+1} (\ZZ)$.
The set of triangular bases is denoted by $\mathcal{B}^{tri}$.
\end{defi}

Let $(H_{\ZZ}, L, \ee)$ be a unimodular bilinear lattice with a triangular basis such that $S = L(\ee, \ee)^T$. Then we have the following objects.
\begin{enumerate}[(i)]
	\item A symmetric bilinear form $I^{(0)} : H_{\ZZ} \times H_{\ZZ} \to \ZZ$ and a skew-symmetric bilinear form $I^{(1)} : H_{\ZZ} \times H_{\ZZ} \to \ZZ$ with
		\begin{align*}
			& I^{(0)} = L^T + L, \hspace{3mm} I^{(0)}(\ee, \ee) = S + S^T \\
			& I^{(1)} = L^T - L, \hspace{3mm} I^{(1)}(\ee, \ee) = S - S^T
		\end{align*}
	\item An automorphism $M : H_{\ZZ} \to H_{\ZZ}$ defined by
		\begin{equation} \label{monodromy-HL}
			L(Mv, w) = L(w, v), \hspace{3mm} M(\ee) = \ee(S^{-1}S^T)
		\end{equation}
		where $v, w \in H_{\ZZ}$.
	\item The set of $L_1$-cycles $R^{(0)} := \{ v \in H_{\ZZ} \, | \, L(v,v) =1 \}$ and the set $R^{(1)} = H_{\ZZ}$.
	\item For $p \in \{0,1\}$ and $v \in R^{(p)}$ the reflection (if $p=0$) respectively transvection (if $p=1$) $s_v^{(p)} \in \Aut(H_{\ZZ}, I^{(p)})$ defined by
		\begin{equation*}
			s^{(p)}_v(w) := w - I^{(p)}(v,w)v
		\end{equation*}
		for $w \in H_{\ZZ}$.
		Here, $\Aut(H_{\ZZ}, I^{(p)})$ is a subgroup of $\Aut(H_{\ZZ})$, the automorphism group of $H_{\ZZ}$, consisting of automorphisms that preserve $I^{(p)}$. That is, $A \in \Aut(H_{\ZZ}, I^{(p)})$ if and only if $I^{(p)}( A(w_1), A(w_2)) = I^{(p)}(w_1, w_2)$ for all $w_1, w_2 \in H_{\ZZ}$.
\end{enumerate}

\begin{theo}[\cite{BH}, \cite{HK2}] \label{product-of-reflections-monodromy}
Let $(H_{\ZZ}, L, \ee)$ be a unimodular bilinear lattice with a triangular basis. Let $p \in \{0,1\}$. Then
\begin{equation}
	s_{e_1}^{(p)} \cdots s_{e_{n+1}}^{(p)} = (-1)^{p+1} M.
\end{equation}
\end{theo}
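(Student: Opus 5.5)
We will prove the identity by writing everything in the triangular basis $\ee$ and proving a matrix identity. With $S = L(\ee,\ee)^T$ we have $I^{(p)}(\ee,\ee) = S + (-1)^{p+1} S^T$. Write this as $S - \epsilon S^T$ with $\epsilon = (-1)^p$. Then $s^{(p)}_{e_k}(w) = w - I^{(p)}(e_k,w)e_k$. Expressing $w = \ee x$ for a column vector $x$, we get
\begin{equation*}
	s^{(p)}_{e_k}(\ee x) = \ee\,(I - E_{kk} J)\,x, \qquad J := I^{(p)}(\ee,\ee) = S - \epsilon S^T ,
\end{equation*}
where $E_{kk}$ is the matrix unit. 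So the matrix of $s^{(p)}_{e_k}$ is $R_k := I - E_{kk}J$, which differs from the identity only in row $k$. The claim becomes $R_1 R_2 \cdots R_{n+1} = -\epsilon\, S^{-1}S^T$, using $M(\ee) = \ee(S^{-1}S^T)$ from (\ref{monodromy-HL}).

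The key step is an inductive formula for the partial products. Let $U$ be the strictly upper triangular part of $S$, so $S = I + U$ and $S^T = I + U^T$. Then row $k$ of $J$ is $(1-\epsilon)$ on the diagonal, $S_{kj}$ for $j>k$, and $-\epsilon S_{jk}$ for $j<k$. I will prove by descending induction on $k$ that
\begin{equation*}
	R_k R_{k+1}\cdots R_{n+1} = I - P_k\, S_k^{-1} J ,
\end{equation*}
where $P_k$ is the projection onto coordinates $k,\dots,n+1$ and $S_k$ is $S$ with its upper-left $(k-1)\times(k-1)$ block replaced by the identity. A cleaner way to organize this is the standard argument for Coxeter elements of a triangular form. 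Write $R_1\cdots R_{n+1} = I - X J$ with $X$ upper triangular and unipotent. Expanding the product gives $X = \sum_{i_1<\dots<i_r} (-1)^{r-1} E_{i_1i_1}JE_{i_2i_2}J\cdots E_{i_ri_r}$. Only the strictly upper part of $J$, namely $U$, contributes to the off-diagonal links, and it enters with a sign pattern. So $X = (I+U)^{-1} = S^{-1}$, which can be verified by checking $X(I+U) = I$ through the telescoping recursion $X = E_{11} + \dots$. Verifying this expansion carefully is the main bookkeeping obstacle. To double-check it I would do the induction $R_k\cdots R_{n+1}$ explicitly, multiplying on the left by $R_k$ at each step.

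Given $X = S^{-1}$, the product is
\begin{equation*}
	I - S^{-1}(S - \epsilon S^T) = I - I + \epsilon S^{-1}S^T = \epsilon\, S^{-1}S^T .
\end{equation*}
This contradicts $-\epsilon$, so the sign depends on convention. The paper's convention that $M$ acts on bases, with $\ee \mapsto \ee\, S^{-1}S^T$ and coordinates transforming accordingly, and the order of composition must be reconciled. I would recheck this carefully: whether the matrix of $M$ on coordinates is $S^{-1}S^T$ or its inverse, and whether $s_{e_1}\cdots s_{e_{n+1}}$ means applying $s_{e_{n+1}}$ first. Fixing these conventions so that the result reads $(-1)^{p+1}M$ is the final step. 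Unimodularity ($\det S = 1$, integrality of $S^{-1}$) ensures everything stays over $\ZZ$. As a sanity check I would verify the identity in the rank-1 and rank-2 cases before finalizing the signs.
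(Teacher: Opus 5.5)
Your strategy is the right one, and it is the standard computation behind the cited result; the paper itself only quotes the theorem. The matrix of $s^{(p)}_{e_k}$ in the basis $\ee$ is $R_k = I - E_{kk}J$ with $J = I^{(p)}(\ee,\ee)$, and $s^{(p)}_{e_1}\circ\cdots\circ s^{(p)}_{e_{n+1}}$ sends $\ee$ to $\ee R_1\cdots R_{n+1}$. Expanding gives $I - XJ$, where only the strictly upper triangular part $U$ of $J$ enters $X$, so $X=\sum_{m\ge 0}(-U)^m=(I+U)^{-1}=S^{-1}$.

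As written, however, the argument does not prove the statement. It ends with a claimed contradiction, which you propose to remove by re-choosing conventions. The discrepancy is not a matter of convention but a sign error at the start. The paper's formulas $I^{(0)}(\ee,\ee)=S+S^T$ and $I^{(1)}(\ee,\ee)=S-S^T$ give $I^{(p)}(\ee,\ee)=S+(-1)^pS^T$, not $S+(-1)^{p+1}S^T$. So $J=S+\epsilon S^T$ with $\epsilon=(-1)^p$: its diagonal entries are $1+\epsilon$ (not $1-\epsilon$), and its entries below the diagonal are $+\epsilon S_{jk}$. A quick check exposes this. Since $I^{(0)}(e_k,e_k)=2L(e_k,e_k)=2$, the map $s^{(0)}_{e_k}$ sends $e_k$ to $-e_k$, a genuine reflection. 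This is the identity $s^{(p)}_v(v)=(-1)^{p+1}v$ used in the proof of Proposition \ref{braid-and-phi}. Your $J$ has zero diagonal for $p=0$ and would make $s^{(0)}_{e_k}$ fix $e_k$.

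With the corrected $J$ your computation closes at once. The strictly upper part of $J$ is still $U$, because the $S^T$ term vanishes above the diagonal, so still $X=S^{-1}$, and
\[
R_1\cdots R_{n+1} = I - S^{-1}\bigl(S+\epsilon S^T\bigr) = -\epsilon\, S^{-1}S^T = (-1)^{p+1}S^{-1}S^T ,
\]
which is the matrix of $(-1)^{p+1}M$. The conventions you wanted to revisit were already correct. The formula $M(\ee)=\ee\,S^{-1}S^T$ is forced by $L(Mv,w)=L(w,v)$ together with $L(\ee,\ee)=S^T$, and the order of composition is as you had it.

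No change of convention could have rescued your sign anyway. In rank one, $S=(1)$, $M=\mathrm{id}$ and $s^{(0)}_{e_1}=-\mathrm{id}$, while $s^{(1)}_{e_1}=\mathrm{id}$, so the sign $(-1)^{p+1}$ is forced. The rank-one sanity check you planned would have located the error for both values of $p$. Finally, unimodularity plays no role beyond $\det S=1$, which is automatic for $S\in T_{n+1}(\ZZ)$.
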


Let $\Br_{n+1}$ be the braid group with $n+1$ strings and $\{ \pm 1 \}^{n+1}$ be the ``sign group" (which is generated by the elements $\delta_j = \text{diag}( (-1)^{\delta_{ij}} )_{i=1, \dots,n+1} \in \{ \pm 1 \}^{n+1}$).
The following relations define an action of $\{ \pm 1 \}^{n+1}$ on $\Br_{n+1}$, and hence a semi-direct product
\begin{align*}
	& \sigma_j \delta_i \sigma_j^{-1} = \delta_i, \hspace{3mm} i \in \{ 1, \cdots, n+1  \} \setminus \{ j, j+1 \},\\
	&  \sigma_j \delta_j \sigma_j^{-1} = \delta_{j+1}, \hspace{3mm}  \sigma_j \delta_{j+1} \sigma_j^{-1} = \delta_{j}.
\end{align*}

\begin{lem}[Lemma 3.10 in \cite{HL}] \label{def-of-braid-action}
Let $(H_{\ZZ},L)$ be a unimodular bilinear lattice of rank $n+1 \geq 2$. Fix $ p \in \{ 0,1 \}$.
The following formulae define an action of $\Br_{n+1} \ltimes \{ \pm 1 \}^{n+1}$ on the set $(R^{(p)})^{n+1}$,
\begin{align*}
	& \sigma_j(\underline{v}) = (v_1, \cdots, v_{j-1}, s_{v_j}^{(p)}(v_{j+1}), v_j, v_{j+2}, \cdots, v_{n+1} ), \\
	& \sigma^{-1}_j(\underline{v}) = (v_1, \cdots, v_{j-1}, v_{j+1}, (s_{v_{j+1}}^{(p)} )^{-1}(v_{j}), v_{j+2}, \cdots, v_{n+1} ), \\
	& \delta_j(\underline{v}) = ( v_1, \cdots, v_{j-1}, -v_j, v_{j+1}, \cdots, v_{n+1} ),
\end{align*}
for $\underline{v} = (v_1, \cdots, v_{n+1}) \in (R^{(p)})^{n+1}$.
\end{lem}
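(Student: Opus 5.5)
We have to check three things: the formulae send $(R^{(p)})^{n+1}$ into itself; $\sigma_j$ and the displayed $\sigma_j^{-1}$ are mutually inverse and satisfy the braid relations; and the sign changes $\delta_j$ satisfy the semidirect product relations. By the presentation of $\Br_{n+1}\ltimes\{\pm1\}^{n+1}$, these checks suffice. Throughout we use only that $s^{(p)}_v$ is an isometry of $I^{(p)}$ and the identity $s^{(p)}_{g v}= g\, s^{(p)}_v\, g^{-1}$ for $g\in \Aut(H_{\ZZ},I^{(p)})$. The identity follows directly from $g s_v g^{-1}(w)=w-I^{(p)}(v,g^{-1}w)\,gv$.

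\textbf{Well-definedness.} For $p=1$ there is nothing to check, since $R^{(1)}=H_{\ZZ}$. For $p=0$ we need $s^{(0)}_{v}(w)\in R^{(0)}$ whenever $v,w\in R^{(0)}$. Note $L(v,v)=1$ means $I^{(0)}(v,v)=2$, and $L(x,x)=\tfrac12 I^{(0)}(x,x)$. Hence $L(x,x)=1$ is the condition $I^{(0)}(x,x)=2$, which is preserved by the isometry $s^{(0)}_v$. The same argument applies to $(s^{(0)}_{v})^{-1}=s^{(0)}_v$. Also $-v\in R^{(p)}$ trivially.

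\textbf{Inverses.} Apply $\sigma_j^{-1}$ to $\sigma_j(\underline v)$. The new pair in positions $j,j+1$ is $(v_j,\,(s_{v_j})^{-1}s_{v_j}(v_{j+1}))=(v_j,v_{j+1})$. The other order is analogous.

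\textbf{Braid relations.} Commutation of $\sigma_i,\sigma_j$ for $|i-j|\ge2$ holds because they act on disjoint slots. For $\sigma_j\sigma_{j+1}\sigma_j=\sigma_{j+1}\sigma_j\sigma_{j+1}$ it suffices to track the triple $(a,b,c)$ in positions $j,j+1,j+2$. The left side gives
\[(a,b,c)\mapsto(s_a b,a,c)\mapsto(s_ab,\,s_a c,\,a)\mapsto(s_{s_ab}s_ac,\,s_ab,\,a).\]
The right side gives
\[(a,b,c)\mapsto(a,s_bc,b)\mapsto(s_as_bc,\,a,\,b)\mapsto(s_as_bc,\,s_ab,\,a).\]
The two results agree because $s_{s_ab}s_a=s_as_bs_a^{-1}s_a=s_as_b$ by the conjugation identity with $g=s_a$. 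This is the only step with content.

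\textbf{Sign relations.} The $\delta_j$ commute and square to the identity. We also need $s_{-v}=s_v$, which holds because $I^{(p)}(v,w)v$ is quadratic in $v$; and $s_v(-w)=-s_v(w)$, which holds by linearity. Then $\sigma_j\delta_j=\delta_{j+1}\sigma_j$ follows: both send the pair to $(s_{v_j}v_{j+1},-v_j)$. Similarly $\sigma_j\delta_{j+1}=\delta_j\sigma_j$, since both give $(-s_{v_j}v_{j+1},v_j)$. Finally $\delta_i$ commutes with $\sigma_j$ for $i\notin\{j,j+1\}$.
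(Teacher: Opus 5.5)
Your proposal is correct. The paper gives no proof of its own and only cites Lemma 3.10 of \cite{HL}. Your argument is the standard direct verification on generators and relations: preservation of $R^{(0)}$ via $I^{(0)}(x,x)=2L(x,x)$, and reduction of the braid relation to $s^{(p)}_{s^{(p)}_a(b)}=s^{(p)}_a\,s^{(p)}_b\,(s^{(p)}_a)^{-1}$, which is the conjugation formula $g\,s^{(p)}_v\,g^{-1}=s^{(p)}_{g(v)}$ applied to the isometry $g=s^{(p)}_a$.
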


\begin{lem}[Lemma 3.11 and Lemma 3.13 in \cite{HL}]
	\label{braid-group-action}
Let $(H_{\ZZ}, L)$ be a unimodular bilinear lattice with a nonempty set $\mathcal{B}^{tri}$ of triangular bases.
The actions in Lemma \ref{def-of-braid-action} of $\Br_{n+1} \ltimes \{ \pm 1 \}^{n+1}$ on $(R^{(p)})^{n+1}$ both restrict to the same action on $\mathcal{B}^{tri}$. The action can also be written as follows:
\begin{align*}
	& \sigma_j(\underline{v}) = (v_1, \cdots, v_{j-1}, v_{j+1} - L(v_{j+1}, v_j)v_j, v_j, v_{j+2}, \cdots, v_{n+1} ), \\
	& \sigma^{-1}_j(\underline{v}) = (v_1, \cdots, v_{j-1}, v_{j+1}, v_j - L(v_{j+1}, v_j)v_{j+1}, v_{j+2}, \cdots, v_{n+1} ), \\
	& \delta_j(\underline{v}) = ( v_1, \cdots, v_{j-1}, -v_j, v_{j+1}, \cdots, v_{n+1} ),
\end{align*}
for $\underline{v} = (v_1, \cdots, v_{n+1}) \in \mathcal{B}^{tri}$.
There exists a compatible action of $\Br_{n+1} \ltimes \{ \pm 1 \}^{n+1}$ on $T_{n+1}(\ZZ)$ induced by the map 
\begin{equation} \label{projection}
	\mathcal{B}^{tri} \to T_{n+1}(\ZZ), \hspace{2mm} \underline{v} \mapsto L(\underline{v}, \underline{v})^T.
\end{equation}
\end{lem}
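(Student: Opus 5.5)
To prove Lemma \ref{braid-group-action}, I would first check that the action of Lemma \ref{def-of-braid-action} sends triangular bases to triangular bases. Then I would show that on such bases the two formulae (for $p=0$ and $p=1$) agree with the $L$-formulae in the statement. Finally I would read off the induced action on $T_{n+1}(\ZZ)$.

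\textbf{Step 1: elements of a triangular basis lie in $R^{(0)}$, and the two formulae agree.} Let $\underline{v}\in\mathcal{B}^{tri}$. Since $L(\underline v,\underline v)^T$ is upper triangular with $1$'s on the diagonal, we have $L(v_i,v_i)=1$ and $L(v_i,v_j)=0$ for $i<j$. So every $v_i$ is an $L_1$-cycle, and $\underline v\in (R^{(0)})^{n+1}\cap(R^{(1)})^{n+1}$. Now compute, using $L(v_j,v_{j+1})=0$:
\[
I^{(0)}(v_j,v_{j+1})=L(v_{j+1},v_j)+L(v_j,v_{j+1})=L(v_{j+1},v_j),
\]
\[
I^{(1)}(v_j,v_{j+1})=L(v_{j+1},v_j)-L(v_j,v_{j+1})=L(v_{j+1},v_j).
\]
Hence $s^{(p)}_{v_j}(v_{j+1})=v_{j+1}-L(v_{j+1},v_j)v_j$ for both $p=0$ and $p=1$.

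For the inverse, I would use that $s^{(0)}_v$ is an involution when $L(v,v)=1$, since then $I^{(0)}(v,v)=2$. I would also use $(s^{(1)}_v)^{-1}(w)=w+I^{(1)}(v,w)v$, which holds because $I^{(1)}(v,v)=0$. With $L(v_j,v_{j+1})=0$, both give
\[
(s^{(p)}_{v_{j+1}})^{-1}(v_j)=v_j-L(v_{j+1},v_j)v_{j+1}.
\]
The sign change $\delta_j$ is the same in both cases.

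\textbf{Step 2: triangularity is preserved.} It suffices to check this for the generators $\sigma_j^{\pm1}$ and $\delta_j$. For $\delta_j$ it is clear, since both $L(v_j,v_j)$ and the vanishing entries are unchanged. For $\sigma_j$, write $a=L(v_{j+1},v_j)$ and $u=v_{j+1}-av_j$. The new basis is $(\dots,u,v_j,\dots)$, and it is still a $\ZZ$-basis because the change of basis is unimodular. Bilinearity gives:
\begin{itemize}
\item $L(u,u)=1-a\cdot 0-a\cdot a+a^2=1$;
\item $L(u,v_j)=a-a=0$, which is exactly the needed vanishing in the swapped positions;
\item for $i<j$, $L(v_i,u)=0$ and $L(v_i,v_j)=0$;
\item for $k>j+1$, $L(u,v_k)=0$ and $L(v_j,v_k)=0$.
\end{itemize}
The computation for $\sigma_j^{-1}$ is symmetric. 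Entries remain integral since $a\in\ZZ$.

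\textbf{Step 3: the induced action on $T_{n+1}(\ZZ)$.} The relations of $\Br_{n+1}\ltimes\{\pm1\}^{n+1}$ already hold on $(R^{(p)})^{n+1}$ by Lemma \ref{def-of-braid-action}, so the restriction to $\mathcal{B}^{tri}$ is an action. For compatibility, observe that the new matrix $L(\sigma_j\underline v,\sigma_j\underline v)$ is determined entrywise by $L(\underline v,\underline v)$ through bilinear expansion. Explicitly, it is $B^TL(\underline v,\underline v)B$, where $B$ is the integer matrix expressing $\sigma_j\underline v=\underline vB$, and $B$ depends only on the entry $a$ of $S$. So the map (\ref{projection}) intertwines the action with a well-defined action on $T_{n+1}(\ZZ)$, namely $S\mapsto B(S)^T S B(S)$.

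I expect the main obstacle to be this well-definedness on all of $T_{n+1}(\ZZ)$, including matrices not coming from a fixed lattice. I would handle it by realizing an arbitrary $S\in T_{n+1}(\ZZ)$ via $H_\ZZ=\ZZ^{n+1}$ with $L(\ee,\ee)=S^T$. This lattice is unimodular because $\det S=1$, so every $S$ arises from some triangular basis.
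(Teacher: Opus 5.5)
The paper gives no proof of its own and simply quotes this lemma from \cite{HL} (Lemmas 3.11 and 3.13). Your argument is correct and is the standard direct verification: triangularity gives $L(v_j,v_{j+1})=0$, hence $I^{(0)}(v_j,v_{j+1})=I^{(1)}(v_j,v_{j+1})=L(v_{j+1},v_j)$; a bilinear computation shows triangularity is preserved; the matrix action is $S\mapsto B(S)^T S B(S)$; and realizing an arbitrary $S\in T_{n+1}(\ZZ)$ on $\ZZ^{n+1}$ with $L(\ee,\ee)=S^T$ is exactly what transfers the group relations from bases to matrices, so the action is well defined on all of $T_{n+1}(\ZZ)$.
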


\begin{defi}
Let $(H_{\ZZ}, L)$ be a unimodular bilinear lattice with a nonempty set $\mathcal{B}^{tri}$ of triangular bases.
Given a triangular basis $\underline{e} \in \mathcal{B}^{tri}$ and $S = L(\ee, \ee)^T \in T_{n+1}(\ZZ)$, we define
\begin{align*}
	& \text{$\mathcal{B}^{dist} := \Br_{n+1} \ltimes \{ \pm 1 \}^{n+1} (\ee)$, and} \\
	& \text{$\mathcal{S}^{dist} := \Br_{n+1} \ltimes \{ \pm 1 \}^{n+1} (S)$.}
\end{align*}
An element of $\mathcal{B}^{dist}$ is called a distinguished basis and an element of  $\mathcal{S}^{dist}$ a distinguished matrix.
\end{defi}

\begin{defi}[Definition 3.24 in \cite{HL}] \label{the-map-Z}
Let $(H_{\ZZ}, L, \underline{e})$ be a unimodular bilinear lattice of rank $n+1 \geq 2$ with a triangular basis $\underline{e}$.
For $(\alpha, \varepsilon) \in \Br_{n+1} \ltimes \{ \pm 1 \}^{n+1}$ define an automorphism $Z((\alpha, \varepsilon)) \in \Aut(H_{\ZZ})$ as follows:
\begin{align*}
	Z : \Br_{n+1} \ltimes \{ \pm 1 \}^{n+1} &\to \Aut(H_{\ZZ}), \\
	(\alpha, \varepsilon) &\mapsto Z((\alpha, \varepsilon)) = (\underline{e} \to (\alpha, \varepsilon)(\underline{e})).
\end{align*}
\end{defi}

\begin{lem}[Lemma 3.25 (c) in \cite{HL}] \label{lem:anti-homZ}
Let $(\Br_{n+1} \ltimes \{ \pm 1 \}^{n+1} )_S$ be the stabilizer of $S = L(\ee, \ee)^T \in T_{n+1}(\ZZ)$.  The restriction of the map $Z$ to $(\Br_{n+1} \ltimes \{ \pm 1 \}^{n+1} )_S$ is also denoted $Z$,
\begin{align*}
	Z : (\Br_{n+1} \ltimes \{ \pm 1 \}^{n+1})_S \to \Aut(H_{\ZZ},L).
\end{align*}
It is a group antihomomorphism.
\end{lem}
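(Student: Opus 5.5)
The plan is to prove two things about the restricted map $Z$: that it lands in $\Aut(H_{\ZZ},L)$, and that it reverses the order of composition. The key observation is that the braid group action of Lemma \ref{braid-group-action} on $\mathcal{B}^{tri}$ is given by universal formulas in $L$. Each generator replaces the tuple $\underline{v}$ by a new tuple whose entries are $\ZZ$-linear combinations of the $v_i$, with coefficients that are entries of $L(\underline{v},\underline{v})$. So for $g \in \Br_{n+1}\ltimes\{\pm1\}^{n+1}$ we can write $g(\underline{v}) = \underline{v}\, C_g(S_{\underline v})$, where $S_{\underline v} = L(\underline v,\underline v)^T$ and $C_g$ is an integer matrix depending only on $g$ and on $S_{\underline v}$. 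By induction on word length, using compatibility of the action with the projection (\ref{projection}), we get the cocycle rule
\[
C_{gh}(S) = C_h(S)\, C_g(h(S)).
\]
For elements of the stabilizer this simplifies to $C_{gh}(S)=C_h(S)C_g(S)$.

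Next we show $Z(g)$ preserves $L$ when $g$ stabilizes $S$. By definition $Z(g)$ sends $\ee$ to $g(\ee)$, so
\[
L(Z(g)\ee, Z(g)\ee) = L(g(\ee), g(\ee)) = (g(S))^T = S^T = L(\ee,\ee).
\]
Since $\ee$ is a basis, $Z(g)$ is an automorphism preserving $L$. Invertibility holds because $g^{-1}$ also stabilizes $S$.

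For the antihomomorphism property, take $g,h$ in the stabilizer. Since $Z(g)$ is $\ZZ$-linear, $Z(g)(\ee) = \ee\, C_g(S)$ gives $Z(g)$ the matrix $C_g(S)$ in the basis $\ee$. We then compute
\[
Z(gh)(\ee) = (gh)(\ee) = g(h(\ee)).
\]
We have $h(\ee) = \ee C_h(S)$, and this is a triangular basis with Gram matrix $S$. Applying the universal formula for $g$ to it gives
\[
g(h(\ee)) = h(\ee)\, C_g(S) = \ee\, C_h(S)\, C_g(S).
\]
In other words, the matrix of $Z(gh)$ is the matrix of $Z(h)$ times the matrix of $Z(g)$, which is exactly the matrix of $Z(h)\circ Z(g)$. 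Hence $Z(gh)=Z(h)Z(g)$.

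The main obstacle is justifying that the action on a basis is "equivariant" under change of basis with the same Gram matrix. Concretely, we need: if $\underline v$ and $\underline w$ are triangular with the same matrix $S$, and $\underline w = \underline v\, A$, then $g(\underline w)$ is the image of $g(\underline v)$ under the linear map sending $\underline v$ to $\underline w$. This holds because $L$-preserving linear maps commute with the generator formulas; it suffices to check this on the generators $\sigma_j,\sigma_j^{-1},\delta_j$, where it is immediate from Lemma \ref{braid-group-action}. A careful induction then handles the passage from generators to arbitrary words.
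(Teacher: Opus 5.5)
Your argument is correct. The paper gives no proof of this lemma; it simply imports Lemma 3.25(c) from \cite{HL}, so the relevant comparison is with the standard argument there.

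That argument is coordinate-free. Every $\psi\in\Aut(H_{\ZZ},L)$ commutes with the braid group action on $\mathcal{B}^{tri}$, because the generator formulas involve only values of $L$ (equivalently, $\psi\, s^{(p)}_v\,\psi^{-1}=s^{(p)}_{\psi(v)}$). For $g,h$ in the stabilizer one has $Z(h)\in\Aut(H_{\ZZ},L)$, exactly as you show. Then, in one line, $Z(gh)(\ee)=g(h(\ee))=g(Z(h)(\ee))=Z(h)(g(\ee))=Z(h)Z(g)(\ee)$.

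Your matrices $C_g(S)$ and the cocycle rule $C_{gh}(S)=C_h(S)\,C_g(h(S))$ are the coordinate form of this same equivariance. They cost a bit more bookkeeping, but they make the matrix of $Z(g)$ in the basis $\ee$ explicit. That is how the lemma is used later in the paper, to get $(\sigma_1\cdots\sigma_n)^m\hat{\chi}_g^{row}=\hat{\chi}_g^{row}M_F^m$.

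Your ``main obstacle'' is not an additional gap; it follows at once from your universal formula. If $\underline{w}=\psi(\underline{v})$ with the same Gram matrix $S$, then
$g(\underline{w})=\underline{w}\,C_g(S)=\psi(\underline{v}\,C_g(S))=\psi(g(\underline{v}))$.

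One small point: invertibility of $Z(g)$ does not need the remark about $g^{-1}$. It holds because $g(\ee)\in\mathcal{B}^{tri}$ is a $\ZZ$-basis, which is why $Z$ lands in $\Aut(H_{\ZZ})$ at all. Alternatively, $A^T S^T A=S^T$ with $\det S=1$ forces $\det A=\pm 1$.
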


\begin{rem} \label{rmk:lattice-to-solution-space}
In the preceding section, we defined the braid group action on unimodular bilinear lattices. 
In the next section, we consider an analogous structure in the complex vector space $V$ of solutions of a certain scalar ODE 
In this setting, the matrix $S$ representing the bilinear form $L$ with respect to a suitable basis of $V$ is a Stokes matrix of the ODE at an irregular singular point. 
A special feature of this ODE is that $S$ can have integer entries with respect to an appropriate basis. 
The free abelian group generated by this basis therefore naturally defines a unimodular bilinear lattice in $V$.
\end{rem}

\section{Main example}

\subsection{A bilinear form on the solution space} \label{subsec:A bilinear vector space}
In this section we assume $n+1$ is odd and we write $n+1 = 2d+1$.
If $n+1$ is even a similar analysis applies but we focus on the odd case, as the case $n+1=3$ will be our main example.
We now carry out the construction anticipated in Remark \ref{rmk:lattice-to-solution-space}. 
We consider the scalar ODE associated with the system \eqref{matrix-ODE-omegahat} and construct a nondegenerate bilinear form on its solution space.

We shall find a basis $X^{(0)}, \cdots, X^{(n)}$ of solutions of the system
\begin{equation*}
	X_{\zeta} = \left( -\frac{1}{\zeta^2} \Pi + \frac{1}{\zeta} m  \right) X, \quad X = (x_0, \cdots, x_n)^T,
\end{equation*}
which is the system in (\ref{matrix-ODE-omegahat}).
Then the first entry $x_0$ satisfies the following scalar ODE
 \begin{equation} \label{scalar-ODE-omegahat}
 	\zeta( \partial_{\zeta} -m_{n} ) \cdots \zeta( \partial_{\zeta} -m_{0}) x_0 = -x_0
 \end{equation}
 where $\partial_{\zeta} = \zeta \tfrac{d}{d\zeta}$.
 
\begin{defi}
Let $V$ denote the vector space consisting of (multi-valued) solutions of the scalar ODE (\ref{scalar-ODE-omegahat}).
\end{defi}

\begin{prop} \label{prop:rotation}
The scalar ODE (\ref{scalar-ODE-omegahat}) is invariant under the rotation
\begin{equation*}
	\zeta \mapsto \omega \zeta, \hspace{2mm} \omega = e^{\frac{1}{n+1}2 \pi \ii}.
\end{equation*}
Consequently, if $x_0(\zeta)$ is a solution of \eqref{scalar-ODE-omegahat} then $x_0(\omega \zeta)$ is also a solution.
 \end{prop}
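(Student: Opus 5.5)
The plan is to check directly that the operator in \eqref{scalar-ODE-omegahat} is compatible with the substitution $\zeta \mapsto \omega\zeta$. Throughout, $\partial_\zeta = \zeta \tfrac{d}{d\zeta}$ denotes the Euler operator. Set $D_i := \zeta(\partial_\zeta - m_i)$, so the equation reads $D_n \cdots D_0\, x_0 = -x_0$. For a function $f$, let $R f(\zeta) := f(\omega\zeta)$ be the rotation operator.

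The first step is to record how the two ingredients of $D_i$ commute with $R$. The Euler operator is scale invariant: by the chain rule, $\zeta \tfrac{d}{d\zeta}[f(\omega\zeta)] = (\omega\zeta) f'(\omega\zeta)$, so $\partial_\zeta \circ R = R\circ \partial_\zeta$. Multiplication by a constant $m_i$ clearly commutes with $R$. Multiplication by $\zeta$ does not commute: $\zeta\cdot (Rf)(\zeta) = \omega^{-1} R(\zeta f)(\zeta)$.

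Combining these gives $D_i \circ R = \omega^{-1} R \circ D_i$ for each $i$. Composing the $n+1$ factors yields
\[
D_n\cdots D_0 \circ R = \omega^{-(n+1)}\, R\circ D_n \cdots D_0 = R\circ D_n\cdots D_0 ,
\]
since $\omega^{n+1}=1$.

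Now suppose $x_0$ is a solution. Then
\[
D_n\cdots D_0 (Rx_0) = R(D_n\cdots D_0 x_0) = R(-x_0) = -Rx_0,
\]
so $x_0(\omega\zeta)$ is again a solution. Equivalently, the whole equation is invariant under the change of variables $\zeta \mapsto \omega\zeta$.

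The only point needing care, and the one I expect to be the main (mild) obstacle, is that solutions are multivalued. I would interpret $x_0(\omega\zeta)$ on the universal cover $\widetilde{\CC}^*$ as $\zeta \mapsto x_0(e^{2\pi\ii/(n+1)}\zeta)$, i.e.\ as shifting $\arg\zeta$ by $2\pi/(n+1)$. The identities above are local and hold on any branch, so the conclusion is unaffected.

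As a consistency check, the statement can also be seen at the level of the system \eqref{matrix-ODE-omegahat}. Using the cyclic symmetry of $\hat\omega$ together with $\tau(X)=d_{n+1}^{-1}Xd_{n+1}$, the function $d_{n+1}^{\pm1}X(\omega\zeta)$ solves the system. Its first entry is unchanged, because the $(0,0)$ entry of $d_{n+1}$ is $1$. This reproduces the scalar statement, but the direct operator computation is shorter, so that is what I would present.
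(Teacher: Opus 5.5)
Your proof is correct. The paper states this proposition without proof, and your identity $D_i\circ R=\omega^{-1}R\circ D_i$, composed over the $n+1$ factors and combined with $\omega^{n+1}=1$, is exactly the direct verification it leaves implicit. In your side remark, only $d_{n+1}^{-1}X(\omega\zeta)$ solves the system, not $d_{n+1}X(\omega\zeta)$; equivalently, $d_{n+1}X(\omega^{-1}\zeta)$ solves it, as in \eqref{cyclic-symmetry-of-omegahat}. This does not affect your main argument.
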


Following \cite{GIL4}, we introduce a solution $g$ of the scalar ODE \eqref{scalar-ODE-omegahat} to construct a basis of $V$.
Related constructions were used in \cite{Du2} and \cite{Gu} to compute Stokes data.
 
 \begin{lem}[cf. Lemma 6.14 \cite{GIL4}] \label{prop-of-g}
 Let $c$ be a real number such that $\tfrac{-m_i + i}{n+1} > c$ for all $i$. Let $g(\zeta) = $
 \begin{equation} \label{function-g}
 	\int^{c + \ii \infty}_{c - \ii \infty} 
	\prod_{i=0}^n \Gamma \left( \tfrac{-m_i +i}{n+1} -t \right) (n+1)^{-(n+1)t} e^{-\ii (n+1)\pi t} \zeta^{-(n+1)t}dt.
 \end{equation}
 This defines a holomorphic function of $\zeta$ on the sector $-\tfrac{3\pi}{2} < \arg \zeta < -\tfrac{\pi}{2}$. Furthermore
 \begin{enumerate}[(1)]
 	\item $g$ satisfies the scalar ODE (\ref{scalar-ODE-omegahat}).
	\item $g(\zeta) \sim \ii (2 \pi)^{\frac{1}{2}(n+2)} (n+1)^{-\frac{1}{2}} e^{\frac{1}{\zeta}} $ as $\zeta \to 0$ in the sector $-\tfrac{3\pi}{2} < \arg \zeta < -\tfrac{\pi}{2}$. \label{behaviour-at-infty}
 \end{enumerate}
 \end{lem}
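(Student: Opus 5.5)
The plan has three parts: show the integral converges and is holomorphic on the stated sector, verify the ODE by a Mellin-type shift argument, and extract the asymptotics by a steepest-descent computation.

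\emph{Convergence and holomorphy.} Write $s = t$ on the vertical line $\Real t = c$, $t = c + \ii y$. By Stirling's formula, each factor satisfies $|\Gamma(a_i - c - \ii y)| \sim \sqrt{2\pi}\, |y|^{a_i - c - 1/2} e^{-\pi |y|/2}$ as $|y|\to\infty$, where $a_i = \tfrac{-m_i+i}{n+1}$. So the product of the $n+1$ Gamma factors decays like $|y|^{\text{const}} e^{-(n+1)\pi|y|/2}$. The remaining factor $e^{-\ii(n+1)\pi t}\zeta^{-(n+1)t}$ has modulus $e^{(n+1)y(\pi + \arg\zeta)}$ up to a factor independent of $y$. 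Hence the integrand decays exponentially provided $|\pi + \arg \zeta| < \tfrac{\pi}{2}$, i.e.\ $-\tfrac{3\pi}{2} < \arg\zeta < -\tfrac{\pi}{2}$, and uniformly on compact subsectors. This gives holomorphy by differentiation under the integral sign. The hypothesis $a_i > c$ ensures that the contour avoids the poles of the Gamma factors.

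\emph{The ODE (1).} Set $\vartheta = \zeta\tfrac{d}{d\zeta}$ (the paper's $\partial_\zeta$). On the integrand, $\vartheta$ acts as multiplication by $-(n+1)t$. Rewrite the left side of \eqref{scalar-ODE-omegahat} as $\zeta^{n+1}\prod_{i}(\vartheta + i - m_i)$ by commuting the factors of $\zeta$ past the $\vartheta$'s, using $\vartheta\zeta = \zeta(\vartheta+1)$; I expect the ordering to give exactly the shifts $i$ that appear in $a_i$. Acting on the integrand, this gives
\[
\zeta^{n+1}\prod_i (n+1)(a_i - t)\cdot(\text{integrand}).
\]
The identity $(a_i-t)\Gamma(a_i-t) = \Gamma(a_i-t+1)$ turns this into the integrand with $t$ replaced by $t-1$, multiplied by $(n+1)^{n+1}\zeta^{n+1}\cdot(n+1)^{-(n+1)}e^{-\ii(n+1)\pi}\zeta^{-(n+1)}$. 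This extra factor equals $e^{-\ii(n+1)\pi} = -1$, since $n+1$ is odd. After substituting $t \mapsto t+1$, the contour moves to $\Real t = c-1$. Shifting it back to $\Real t = c$ is legitimate because no poles are crossed (the poles sit at $t = a_i + k$ with $k \geq 0$, to the right of $c$) and the horizontal segments vanish by the same Stirling decay. This yields $-g$, as required.

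\emph{Asymptotics (2).} This is the main obstacle. I would use the Gauss multiplication formula to combine the $n+1$ Gamma factors, whose arguments $a_i - t$ are (up to the shifts $-m_i/(n+1)$) spaced by $1/(n+1)$, into something comparable to a single $\Gamma((n+1)(\cdot))$. Alternatively, one can apply a saddle-point analysis directly with Stirling's formula.

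Setting $u = -(n+1)t$, the integrand behaves like $\exp\big(u\log u - u - u\log(-\zeta^{-1}) \cdot\big)$ to leading order. The saddle point is at $u \approx -1/\zeta$ (with the branch fixed by $e^{-\ii\pi}$), and the phase there is $e^{1/\zeta}$. The constant comes from the Gaussian integral at the saddle, $\sqrt{2\pi/(n+1)}$-type factors, combined with the $(2\pi)^{(n+1)/2}$ from Stirling. The exponents of $\zeta$ should cancel because $\sum_i(a_i - \tfrac12) = \tfrac{-\sum m_i + n(n+1)/2}{n+1} - \tfrac{n+1}{2} = \tfrac{n}{2}-\tfrac{n+1}{2}$ (using $\sum m_i = 0$), which combines with the Gaussian factor to leave a pure constant.

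Finally, I would track the factor $\ii$ and the orientation of the contour carefully, and deform the vertical contour through the saddle, which is permitted within the sector of decay. Alternatively, I would cite the Meijer $G$-function asymptotics, as in \cite{GIL4}.
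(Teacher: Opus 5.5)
The paper gives no proof of this lemma. It imports the result from \cite{GIL4} (hence the ``cf.''), so your argument is an independent, self-contained route, and it is correct.

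\textbf{Convergence and the ODE.} These two parts are complete as written. The operator normal-orders to $\zeta^{n+1}\prod_{i=0}^{n}(\vartheta+i-m_i)$, which acts on the integrand as $\zeta^{n+1}\prod_i(n+1)(a_i-t)$. After the shift $t\mapsto t+1$ the leftover factor is $e^{-(n+1)\pi\ii}$. This equals $-1$ precisely because of the standing assumption that $n+1$ is odd; for even $n+1$ the same integral would give $+x_0$ on the right. Moving the line from $\Real t=c-1$ back to $\Real t=c$ crosses no poles, as you say.

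\textbf{Comparison with the citation.} Your route explains transparently where the sector, the sign in the ODE, and the constant $C_{asymp}$ come from. The citation gives the full asymptotic expansion with no work.

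\textbf{Part (2).} Here you have a correct sketch, not yet a proof. The saddle $u^*=-1/\zeta$, the value $e^{1/\zeta}$, and the cancellation of the $\zeta$-powers via $\sum_i m_i=0$ are all right. Two remarks would let you finish quickly.

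First, the constant needs no saddle-point bookkeeping. For $m=0$ the Gauss multiplication formula turns the lemma into an exact identity:
\begin{equation*}
g(\zeta)=(2\pi)^{\frac{n}{2}}(n+1)^{-\frac{1}{2}}\int_{u_0-\ii\infty}^{u_0+\ii\infty}\Gamma(u)\,(e^{\pi\ii}\zeta)^{u}\,du=C_{asymp}\,e^{1/\zeta}.
\end{equation*}
Here $u=-(n+1)t$ and $u_0=-(n+1)c>0$ (since $c<a_0=0$). The second equality is Mellin inversion of $e^{-x}$ at $x=e^{-\pi\ii}\zeta^{-1}$, which satisfies $|\arg x|<\tfrac{\pi}{2}$ exactly on the given sector. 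For general $m$, the integrand differs from this one by the factor $\prod_i\bigl(\Gamma(a_i-t)/\Gamma(\tfrac{i}{n+1}-t)\bigr)=1+O(t^{-1})$, again because $\sum_i m_i=0$.

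Second, $\Real(1/\zeta)<0$ throughout the sector, so the main term $e^{1/\zeta}$ is exponentially small. An error bound on a fixed vertical line is only $O(|\zeta|^{u_0})$ and therefore says nothing. You genuinely have to move the line to $\Real u=\Real(-1/\zeta)\to+\infty$. This is legitimate, since all poles lie to the left of the original line. Then apply Stirling uniformly there. Alternatively, quote the standard asymptotics of the Meijer function $G^{n+1,0}_{0,n+1}$, as you propose.
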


Let $C_{asymp} :=\ii (2 \pi)^{\frac{1}{2}(n+2)} (n+1)^{-\frac{1}{2}} $ be the coefficient of the asymptotic formula.
Let $g^{[1]}(\zeta) = \zeta(\partial_{\zeta} - m_0)g(\zeta)$ and $g^{[i+1]} = \zeta(\partial_{\zeta} -m_i)g^{[i]}$, $i=1, \cdots, n$.
\begin{defi} [\cite{GIL4}] \label{def:chig}
Set
\begin{equation}
 	\chi_g := 
	\begin{pmatrix}
		g(\zeta) & \cdots & g(\omega^{-d} \zeta) & \cdots & g(\omega^{-n} \zeta) \\
		-g^{[1]}(\zeta) & \cdots & -\omega^{d}g^{[1]}(\omega^{-d} \zeta) & \cdots & -\omega^{n}g^{[1]}(\omega^{-n} \zeta) \\
		g^{[2]}(\zeta) & \cdots & \omega^{2d}g^{[2]}(\omega^{-d} \zeta) & \cdots & \omega^{2n}g^{[2]}(\omega^{-n} \zeta) \\
			\vdots & & \vdots & & \vdots \\
	\end{pmatrix}.
\end{equation}
Define the normalized fundamental solution by
\begin{equation*}
\hat{\chi}_g := (C_{asymp})^{-1}\chi_g(d_{n+1})^{-d}.
\end{equation*}
For a fundamental solution $\chi$, write $\chi^{row}$ for its first row.
Define
\begin{equation*}
g_0(\zeta) := (C_{asymp})^{-1}g(\zeta), \quad \text{and} \quad
g_\ell(\zeta)
:= \omega^{\ell d}g_0(\omega^\ell\zeta),
\quad \ell\in\ZZ.
\end{equation*}
Then, $ \hat{\chi}_g^{row} = (g_0,g_{-1},\ldots,g_{-n})$.
\end{defi}

Then we define the following associated linear map on the vector space $V$.
\begin{defi} \label{def-of-phi}
$\phi(f)(\zeta) = \omega^{d}f(\omega \zeta)$.
Note that $\phi(g_{\ell})(\zeta) = g_{\ell+1}(\zeta)$ and therefore, $g_0$ is a cyclic vector for $\phi$.
\end{defi}

By Proposition \ref{prop:rotation}, the entries of $\chi_g^{row}$ are solutions of the scalar ODE (\ref{scalar-ODE-omegahat}).
It follows from (\ref{behaviour-at-infty}) in Lemma \ref{prop-of-g} that $\chi_g^{row}$ is a basis of $V$.

The following matrix $F$ plays an important role in our calculations.

\begin{defi}[\cite{Ho}]
When $n+1 = 2d+1$, the matrix $F$ is defined as follows:
\begin{equation}
	F := 
	\left(
	\begin{array}{c|c}
		L^{[d+1]} & \bm{0} \\ \hline
		\bm{0} & U^{[d]}
	\end{array}
	\right).
\end{equation}
Here, $L^{[m]}$ and $U^{[m]}$ are matrices of size $m$ defined as follows: 
\begin{equation*}
	\begin{aligned}
		&L^{[m]} := I_{m} - \sum_{k=0}^{\ell_1 -1}  p_{m -2k -1} \sum_{j=0}^k E_{m - 2k+j, j+1} - \sum_{k=0}^{\ell_2 -1} p_{m -2k -2} \sum_{j=0}^k E_{m - 2k-1, j+1}, \\
		&U^{[m]} := 
		\left(
		\begin{array}{c|c}
			1 & \bm{0} \\ \hline
			\bm{0} & \Delta L^{[m-1]} \Delta
		\end{array}
		\right).
		\end{aligned}
\end{equation*}
Here, $E_{a,b}$ denotes the $m \times m$ matrix unit whose $(a,b)$-entry is $1$ and whose other entries are $0$. 
In the definition of $U^{[m]}$, $\Delta$ denotes the matrix introduced in (\ref{automorphisms}), taken here to be of size $m-1$.
Moreover, $\ell_1 = [\tfrac{m}{2}]$, $\ell_2 = [\tfrac{m-1}{2}]$, and $p_m = (-1)^{n+1-m} s_{n+1-m}$.
Note that $L^{[1]}$ and $U^{[1]}$ are defined as the $(1 \times 1)$ identity matrix.
\end{defi}

\begin{prop}[Proposition 6.16 in \cite{GIL4}] 
The cyclic symmetry of the ODE (\ref{matrix-ODE-omegahat})
\begin{equation}\label{cyclic-symmetry-of-omegahat}
	\omega^{-d} d_{n+1} \tilde{\chi}^{(0)}_k(\omega^{-1}\zeta) \Pi = \tilde{\chi}_{k-\frac{2}{n+1}}^{(0)} (\zeta)
\end{equation}
implies that the canonical solution $\tilde{\chi}_1^{(0)}$ is related to $\hat{\chi}_g$ by the following formula.
\begin{equation} \label{chione-to-chig}
\tilde{\chi}_1^{(0)} (\zeta) = \hat{\chi}_g(\zeta) F.
\end{equation}
\end{prop}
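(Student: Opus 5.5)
Both $\tilde{\chi}^{(0)}_1$ and $\hat{\chi}_g$ are fundamental solutions of (\ref{matrix-ODE-omegahat}). Hence there is a constant invertible matrix $C$ with $\tilde{\chi}^{(0)}_1 = \hat{\chi}_g C$, and the task is to show $C=F$. The plan is to determine $C$ one column at a time, in two stages: use asymptotics to fix the "diagonal" part of $C$ and force many entries to vanish, then use the cyclic symmetry (\ref{cyclic-symmetry-of-omegahat}) to propagate and fix the remaining entries, which turn out to be the Stokes data $p_m$.

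\emph{Stage 1: asymptotics.} By Lemma \ref{prop-of-g}(\ref{behaviour-at-infty}), $g_0$ is asymptotic to $e^{1/\zeta}$ (after normalization by $C_{asymp}$) on $-\tfrac{3\pi}{2}<\arg\zeta<-\tfrac{\pi}{2}$. Consequently $g_{-\ell}$ is asymptotic to $\omega^{-\ell d}e^{\omega^{\ell}/\zeta}$ on the rotated sector $-\tfrac{3\pi}{2}+\tfrac{2\pi\ell}{n+1}<\arg\zeta<-\tfrac{\pi}{2}+\tfrac{2\pi \ell}{n+1}$. This is exactly the $\ell$-th entry of the first row of $\tilde{\chi}^{(0)}_f$, including the factor $(d_{n+1})^{-d}$.

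\begin{itemize}
\item Each column $j$ of $\tilde{\chi}^{(0)}_1$ is characterized by the asymptotics $e^{\omega^j/\zeta}\cdot(\text{const})$ on the whole of $\Omega^{(0)}_1$.
\item Compare the two sides on the overlap of $\Omega^{(0)}_1$ with the sector where $g_{-j}$ has its asymptotics.
\item Where $e^{\omega^j/\zeta}$ is the most recessive exponential, uniqueness gives that column $j$ of $\tilde{\chi}^{(0)}_1$ equals $g_{-j}$ plus a combination of $g_{-k}$ whose exponentials are dominated there.
\item Order the exponentials $\mathrm{Re}(\omega^k/\zeta)$ along the boundary rays of $\Omega^{(0)}_1$ (near $\arg\zeta=\pm\tfrac{\pi}{2}$). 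This forces the lower-triangular shape of the block $L^{[d+1]}$ for columns $0,\dots,d$, and the shape of the block $U^{[d]}$ for columns $d+1,\dots,n$, with $1$'s on the diagonal.
\end{itemize}

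This step amounts to bookkeeping of which exponentials are dominant on which sectors.

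\emph{Stage 2: cyclic symmetry.} The remaining unknown off-diagonal entries are determined from (\ref{cyclic-symmetry-of-omegahat}).

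\begin{itemize}
\item From Definition \ref{def-of-phi}, $\phi g_\ell=g_{\ell+1}$, which gives $\hat{\chi}_g(\omega^{-1}\zeta)$ in terms of $\hat{\chi}_g(\zeta)$ times a companion-type matrix $P$. Since $g_0$ is a cyclic vector, $g_{-n-1}$ is a combination of $g_0,\dots,g_{-n}$.
\item The coefficients of that combination come from the characteristic polynomial of $\phi$, i.e. of the monodromy of $g$ under $\zeta\mapsto\omega^{n+1}\zeta$. Via the Mellin–Barnes integral (\ref{function-g}), this polynomial is $\prod_i(x-e^{2\pi\ii(m_i+\rho_i)/(n+1)})$ up to sign, so its coefficients are the $s_i$ of Theorem \ref{summaryoftheresult}.
\item Substituting $\tilde{\chi}^{(0)}_1=\hat{\chi}_g C$ into (\ref{cyclic-symmetry-of-omegahat}) yields $\tilde{\chi}^{(0)}_{1-\frac{2}{n+1}}=\hat{\chi}_g\, P\, C\,\Pi$, up to the diagonal factor.
\item $\tilde{\chi}^{(0)}_{1-\frac{2}{n+1}}$ also has the triangular form from Stage 1 on its own sector. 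Comparing the two gives a recursion on the columns of $C$, solved successively for the entries $p_{m-2k-1}$ and $p_{m-2k-2}$, reproducing $L^{[m]}$ and $\Delta L^{[m-1]}\Delta$.
\end{itemize}

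\emph{Main obstacle.} I expect the hard part to be this final matching: tracking the interleaved index pattern ($E_{m-2k+j,j+1}$ versus $E_{m-2k-1,j+1}$) and the signs $p_m=(-1)^{n+1-m}s_{n+1-m}$. I would first verify the pattern by hand for $n+1=3$ and $n+1=5$, and then induct on $k$.
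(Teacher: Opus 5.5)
\textbf{There is a genuine gap: Stage 2 produces no equations, so nothing in your plan fixes the off-diagonal entries of $C$.}

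The map $\phi^{-1}$ sends $g_\ell\mapsto g_{\ell-1}$ and rotates the sector of validity of $g_\ell$ by $\frac{2\pi}{n+1}$. The symmetry \eqref{cyclic-symmetry-of-omegahat} rotates $\Omega^{(0)}_1$ onto $\Omega^{(0)}_{1-\frac{2}{n+1}}$ by the same angle, with columns relabelled by $\Pi$. So every Stage-1 constraint on $\Omega^{(0)}_{1-\frac{2}{n+1}}$ is the $\phi^{-1}$-image of a Stage-1 constraint on $\Omega^{(0)}_1$. Comparing it with $\hat\chi_g P C \Pi$ therefore only returns identities.

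This already happens for $n+1=3$:
\begin{itemize}
\item[(a)] Stage 1 gives $(\tilde\chi^{(0)}_1)^{row}=(g_0+c\,g_{-1},\,g_{-1},\,g_{-2})$. On $-\frac{2\pi}{3}<\arg\zeta<-\frac{\pi}{2}$ only $e^{\omega/\zeta}$ lies below $e^{1/\zeta}$, so $c$ is left open.
\item[(b)] The cyclic symmetry then gives $(\tilde\chi^{(0)}_{1/3})^{row}=(g_{-3},\,g_{-1}+c\,g_{-2},\,g_{-2})$.
\item[(c)] On $\Omega^{(0)}_{1/3}=\{0<\arg\zeta<\frac{4\pi}{3}\}$, asymptotics give second column $g_{-1}+c'g_{-2}$ and third column $g_{-2}$. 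For the first column they give nothing, because the sector of $g_0$ misses $\Omega^{(0)}_{1/3}$. If you also allow $g_{-3}$, they give $g_{-3}$, which the symmetry already told you.
\end{itemize}
So you learn only $c'=c$, and $c$ stays free. The value $c=-s$ is a genuine Stokes multiplier and cannot come from matching shapes of rotated copies. The index and sign bookkeeping you flag as the main obstacle is not where the difficulty lies.

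\textbf{What is missing} is a non-equivariant input that ties two different windows of $g$'s together through the recurrence. Within your framework you can impose the asymptotics of one column of $\tilde\chi^{(0)}_1$ at both ends of $\Omega^{(0)}_1$. For $n+1=3$, take $\frac{\pi}{2}<\arg\zeta<\frac{2\pi}{3}$. This lies inside the sector of $g_{-3}$, and there $\Real(\omega^2/\zeta)<\Real(1/\zeta)<\Real(\omega/\zeta)$. So the first column must equal $g_{-3}+c''g_{-2}$. Substituting $g_{-3}=g_0-s\,g_{-1}+s\,g_{-2}$ forces $c=-s$. Alternatively, run Stage 1 for $\tilde\chi^{(0)}_{1+\frac{2}{n+1}}$ against the fixed basis $g_0,\dots,g_{-n}$. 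For general $n$, organizing such two-ended comparisons is the real content of the proof, and your proposal does not say how they determine every entry.

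\textbf{Comparison with the cited argument.} The route the statement points to (cyclic symmetry, as in \cite{GIL4}) avoids this.
\begin{itemize}
\item[(i)] $g_0$ is the unique solution asymptotic to $e^{1/\zeta}$ near $\arg\zeta=-\pi$, where that exponential is the most recessive. Hence the $0$-th column of $\hat\chi_g$ equals the $0$-th column of $\tilde\chi^{(0)}_2$.
\item[(ii)] Iterating \eqref{cyclic-symmetry-of-omegahat}, the $j$-th column of $\hat\chi_g$ equals the $j$-th column of $\tilde\chi^{(0)}_{2-\frac{2j}{n+1}}$.
\item[(iii)] Connecting these to $\tilde\chi^{(0)}_1$ through products of the Stokes factors $\tilQ_k$ yields $F^{-1}$ column by column. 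These factors are generated from $\tilQ_1,\tilQ_{1+\frac{1}{n+1}}$ by \eqref{cyclicsymmetry-for-Stokesfactors} and \eqref{antisymmetry-for-Stokesfactors}, and are expressed through the $s_i$.
\end{itemize}

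\textbf{Possible circularity.} In the paper, the recurrence \eqref{monodromy-of-g} is deduced from this proposition. In your route it is an input, so it must genuinely be derived from the Mellin--Barnes residue expansion. That derivation must also cover resonant cases such as $m=-\rho$, where poles of the Gamma factors collide.
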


 \begin{prop}[Proposition 3.3 and Proposition 3.4 in \cite{Ho}] \leavevmode

 \begin{align} \label{canonical-M}
 	& F  \tilde{M}^{(0)} F^{-1} =
\left(
	\begin{array}{c|ccccc}
		s_1 &	 	1 & & & & \\
		-s_2 &	 	 & \ddots  & &\bm{0} & \\
		\vdots &	 	 &  & \ddots &  &   \\ 
		s_{n-1} &	 & \bm{0} & & 1 &  \\ 
		-s_n & & & & & 1 \\ \hline
		1 & 0 & \cdots & \cdots & 0 & 0
	\end{array}
	\right) =: M_F. \\
 \label{canonical-Stokes}
	&F \tilS_1 F^T = 
	\left(
	\begin{array}{ccccccc}
		1 &  -s_n &  s_{n-1} & \cdots & \cdots & -s_2 &  s_1\\
		0& 1  &  -s_n & s_{n-1} &\dots & \dots  & -s_2 \\
		\vdots& \ddots& \ddots  & \ddots &  \ddots &   & \vdots \\
		\vdots& &\ddots & \ddots  & \ddots & \ddots & \vdots \\
		\vdots&  \bm{0}  & &\ddots  & \ddots  & \ddots & s_{n-1}\\
		\vdots& & &  &\ddots  & \ddots & -s_{n}\\
		0 & \dots & \dots & \dots & \dots & 0 & 1
	\end{array}
	\right) =: S_F.
\end{align}
The diagonal entries of $S_F$ are $1$ and $(i,j)$-th entry of $S_F$ is $(-1)^{j-i} s_{n+1-(j-i)}$ when $0 \leq i < j \leq n$. The other entries are $0$.
 \end{prop}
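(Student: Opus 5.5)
The plan is to derive both formulas from the definition $\tilde{M}^{(0)} = \tilQ_1 \tilQ_{1+\frac{1}{n+1}}\Pi$, the symmetries \eqref{cyclicsymmetry-for-Stokesfactors}, \eqref{antisymmetry-for-Stokesfactors}, and the relation \eqref{chione-to-chig}, $\tilde{\chi}_1^{(0)} = \hat{\chi}_g F$. Throughout we work with the first rows, i.e. with solutions of the scalar ODE \eqref{scalar-ODE-omegahat} in $V$. The basic dictionary is this: the map $\phi$ of Definition \ref{def-of-phi} satisfies $\phi(g_\ell)=g_{\ell+1}$. Hence, in the basis $\hat{\chi}_g^{row} = (g_0, g_{-1}, \dots, g_{-n})$, the matrix of $\phi$ is a companion-type matrix, and its last column is determined by the unique linear relation expressing $g_1$ (equivalently $g_{-n-1}$) in terms of $g_0,\dots,g_{-n}$. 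Since $\phi^{n+1}(f)(\zeta)=\omega^{(n+1)d}f(\omega^{n+1}\zeta)=f(e^{2\pi\ii}\zeta)$, $\phi^{n+1}$ is the monodromy. Therefore that relation is governed by the characteristic polynomial of the monodromy, whose coefficients are the $s_i$ by Theorem \ref{Thm-of-Stokes-data}.

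Step 1 establishes \eqref{canonical-M}. Rewrite the cyclic symmetry \eqref{cyclic-symmetry-of-omegahat} in terms of the first row. The map $\zeta\mapsto\omega^{-1}\zeta$ together with the factor $\omega^{-d}$ is exactly $\phi^{-1}$ acting on row vectors, and $\tilde{\chi}_1^{(0)}$ is transported to $\tilde{\chi}^{(0)}_{1-\frac{2}{n+1}} = \tilde{\chi}_1^{(0)}(\tilQ_{1-\frac{2}{n+1}}\tilQ_{1-\frac{1}{n+1}})^{-1}$. Using \eqref{cyclicsymmetry-for-Stokesfactors} to shift the indices to $1,1+\frac{1}{n+1}$, we get that $\phi$ acts on $(\tilde{\chi}_1^{(0)})^{row}$ by right multiplication by a conjugate of $\tilde{M}^{(0)}$ (possibly its inverse, depending on orientation). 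Substituting $\tilde{\chi}_1^{(0)} = \hat{\chi}_g F$ converts this into: $\phi$ acts on $\hat{\chi}_g^{row}$ by $F\tilde{M}^{(0)}F^{-1}$, up to the direction convention. On the other hand, the direct companion computation from $\phi(g_\ell)=g_{\ell+1}$ gives the matrix $M_F$, with first column $(s_1,-s_2,\dots,-s_n,1)^T$ read off from the characteristic polynomial $x^{n+1}-s_1x^n+\dots-1$. Comparing the two expressions yields \eqref{canonical-M}.

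Step 2 establishes \eqref{canonical-Stokes}. Write $\tilS_1 = \tilQ_1\tilQ_{1+\frac{1}{n+1}}\cdots\tilQ_{1+\frac{n}{n+1}}$. Using \eqref{cyclicsymmetry-for-Stokesfactors}, this product can be expressed through powers of $\tilde{M}^{(0)}\Pi^{-1}$-type blocks. Combining with \eqref{antisymmetry-for-Stokesfactors}, which gives $\tilS_{1}$ versus $\tilS_{2}$ as transposes-inverse, we obtain the relation $\tilS_1^{-1}\tilS_1^{T} \sim (\tilde{M}^{(0)})^{n+1}$ (the monodromy, as in \eqref{monodromy-HL}). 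I would then verify directly that $S_F$ is upper triangular with unit diagonal and Toeplitz entries $(-1)^{j-i}s_{n+1-(j-i)}$, and that $S_F^{-1}S_F^T = M_F^{\,n+1}$ or its inverse. Doing so amounts to observing that $S_F$ encodes the same linear recursion as $M_F$. Next, I would show that $F\tilS_1F^T$ is upper triangular with unit diagonal. This is where $F$ must be shown to respect triangularity: $\tilS_1$ is triangular with respect to the ordering of exponentials $e^{\omega^j/\zeta}$ on the sector, and $F$ is block lower/upper triangular compatible with the permutation relating the two orderings. Since a unipotent triangular $S$ is determined by $S^{-1}S^T$ together with the Stokes-sector structure, this forces equality.

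The main obstacle is Step 2. There, the uniqueness argument "triangular $S$ determined by $S^{-1}S^T$" is false in general, so one must instead compute the entries of $F\tilS_1F^T$ more concretely. The fallback is the explicit description of $\tilQ_k$ from \cite{GIL4}: each Stokes factor is a unipotent matrix with a few entries equal to $\pm s_i$ at positions dictated by which pairs $\omega^i,\omega^j$ swap dominance across the ray. One then multiplies out the factors, with $F$ conjugating the result, and tracks the entries by induction on $d$ using the block structure $L^{[d+1]}\oplus U^{[d]}$.
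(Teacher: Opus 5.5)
The paper does not prove this proposition; it imports both identities from Ho (Propositions 3.3 and 3.4), so I assess your argument on its own merits. Step 1 is correct, and it can be made exact. Take first rows in \eqref{cyclic-symmetry-of-omegahat} with $k=1$ and use $\tilQ_{1-\frac{2}{n+1}}\tilQ_{1-\frac{1}{n+1}}=\Pi^{-1}\tilM$. This gives $\phi\bigl((\tilde{\chi}^{(0)}_1)^{row}\bigr)=(\tilde{\chi}^{(0)}_1)^{row}\tilM$, with no conjugate and no inverse, and hence $\phi(\hat{\chi}_g^{row})=\hat{\chi}_g^{row}F\tilM F^{-1}$. The companion shape comes from $\phi(g_{-j})=g_{-j+1}$. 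The first column comes from the characteristic polynomial of $\tilM$ in Theorem \ref{Thm-of-Stokes-data}. Your remark that it is governed by the characteristic polynomial of the monodromy is wrong, because $\phi^{n+1}$ has a different characteristic polynomial. Note also that this step uses only that $F$ is the change of basis in \eqref{chione-to-chig}, not its explicit form.

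Step 2 has a genuine gap, and it is not where you locate it.

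\textbf{The exact identity.} Multiplying $2(n+1)$ consecutive Stokes factors, using \eqref{cyclicsymmetry-for-Stokesfactors} and $\tilS_2=(\tilS_1)^{-T}$, gives $\tilS_1(\tilS_1)^{-T}=(\tilM)^{n+1}$. So $S':=F\tilS_1F^T$ satisfies $S'(S')^{-T}=M_F^{n+1}$. The identity you plan to check, $S_F^{-1}S_F^T=M_F^{\pm(n+1)}$, is false: for $n=2$, $s=1$ neither sign holds. The correct identity is $S_FS_F^{-T}=M_F^{n+1}$.

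\textbf{Uniqueness is fine.} An upper unipotent $S$ is determined by $SS^{-T}$ (or by $S^{-1}S^T$), because these are $UL$-factorizations and those are unique. Only the version up to conjugacy fails.

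\textbf{What is missing is triangularity.} You need $F\tilS_1F^T$ to be upper unipotent, and this does not follow from the block shape of $F$ and the dominance order. Already for $n=2$, write $\tilS_1=\Sigma_1\uppS_1\Sigma_1^{-1}$ with $\Sigma_1$ from Table \ref{table-of-matrices}. For an upper unipotent $U$, the matrix $(F\Sigma_1)U(F\Sigma_1)^T$ is upper triangular if and only if the $(0,1)$-entry of $U$ equals $s$ (indices from $0$). So triangularity is equivalent to knowing an actual Stokes multiplier. All the analytic content of \eqref{canonical-Stokes} sits in this step.

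Your fallback, multiplying out explicit Stokes factors by induction on $d$, only restates the task. You would need one of the following:
\begin{enumerate}
\item An explicit formula for $\tilQ_1$ relative to $F$. By $\tilS_1=(\tilM)^d\tilQ_1\Pi^{-d}$ and Step 1, the claim is equivalent to $F\tilQ_1\Pi^{-d}F^T=M_F^{-d}S_F$; for $n=2$ this is exactly $F\tilQ_1=I$.
\item An intrinsic analytic pairing on $V$ for which you can prove, from the asymptotics of $g$, that $L(g_{-i},g_{-j})$ vanishes for $i<j$ and equals $1$ for $i=j$. Uniqueness under $\phi$-invariance then finishes the argument.
\end{enumerate}
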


\begin{prop}
The solution $g(\zeta)$ of (\ref{scalar-ODE-omegahat}) satisfies
\begin{equation*}
	g(\zeta) = \sum_{\ell =0}^{n} (-1)^{\ell} s_{\ell + 1} \omega^{-(\ell+1) d}g(\omega^{-(\ell+1)}\zeta).
\end{equation*}
where $s_{n+1} = 1$.
In particular, the functions $g_k$ satisfy the recurrence relation
\begin{equation}  \label{monodromy-of-g} 
	g_k(\zeta)= \sum_{\ell=0}^n (-1)^{\ell}s_{\ell +1}g_{k-(\ell+1)} (\zeta).
\end{equation}
\end{prop}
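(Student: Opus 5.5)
The plan is to read the identity off the first row of the cyclic symmetry \eqref{cyclic-symmetry-of-omegahat}, combined with \eqref{chione-to-chig} and the companion form $M_F$ of \eqref{canonical-M}. The statement is equivalent to saying that the matrix of $\phi$ in the basis $\hat{\chi}_g^{row}=(g_0,g_{-1},\dots,g_{-n})$ is a companion matrix with characteristic polynomial $x^{n+1}+\sum_{i=1}^n(-1)^i s_i x^{n+1-i}-1$. So I would aim to show that $\phi$ corresponds to $M_F$ in this basis.

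\textbf{Step 1 (first row of the cyclic symmetry).} Take \eqref{cyclic-symmetry-of-omegahat} with $k=1$. The first row of $d_{n+1}$ is $(1,0,\dots,0)$, so the first row reads
\[
\omega^{-d}\,\tilde{\chi}_1^{(0),row}(\omega^{-1}\zeta)\,\Pi=\tilde{\chi}_{1-\frac{2}{n+1}}^{(0),row}(\zeta).
\]
By \eqref{Stokes-factors}, the right-hand side equals $\tilde{\chi}_1^{(0),row}(\zeta)\,(\tilQ_{1-\frac{1}{n+1}}\tilQ_{1-\frac{2}{n+1}})^{-1}$. Then \eqref{cyclicsymmetry-for-Stokesfactors} rewrites $\tilQ_{1-\frac{2}{n+1}}=\Pi\tilQ_1\Pi^{-1}$ and $\tilQ_{1-\frac{1}{n+1}}=\Pi\tilQ_{1+\frac{1}{n+1}}\Pi^{-1}$. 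Their product is $\Pi\tilQ_{1+\frac1{n+1}}\tilQ_1\Pi^{-1}$, which has to be compared with $\tilM=\tilQ_1\tilQ_{1+\frac{1}{n+1}}\Pi$. Here the index bookkeeping must be done carefully, in the order dictated by the sector conventions \eqref{Stokes-sector}. The goal is the relation
\[
\tilde{\chi}_1^{(0),row}(\omega^{-1}\zeta)=\omega^{d}\,\tilde{\chi}_1^{(0),row}(\zeta)\,\tilM^{\pm1}
\]
or its analogue with $\phi^{-1}$.

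\textbf{Step 2 (transport to the $g$-basis).} Substitute \eqref{chione-to-chig}, i.e.\ $\tilde\chi^{(0),row}_1=\hat\chi_g^{row}F$, and use $F\tilM F^{-1}=M_F$. This gives $\phi^{\mp1}(\hat\chi_g^{row})=\hat\chi_g^{row}M_F^{\pm1}$ (the sign is fixed in Step 1). The left-hand side can alternatively be computed directly from Definition \ref{def-of-phi}: $\phi^{-1}$ shifts $(g_0,\dots,g_{-n})$ to $(g_{-1},\dots,g_{-n-1})$. Comparing the two expressions, only the column of $M_F$ (or of $M_F^{-1}$) that is not a shift column carries information. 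For $M_F$ the first column is $(s_1,-s_2,\dots,-s_n,1)^T$. Reading it off gives
\[
g_1=\sum_{\ell=0}^{n}(-1)^\ell s_{\ell+1}g_{-\ell},
\]
which is \eqref{monodromy-of-g} at $k=1$. Applying $\phi^{k-1}$, which is linear and satisfies $\phi(g_\ell)=g_{\ell+1}$, gives the recurrence for all $k$.

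\textbf{Step 3 (recovering the identity for $g$).} Unwind the definitions $g_\ell(\zeta)=\omega^{\ell d}C_{asymp}^{-1}g(\omega^\ell\zeta)$ and replace $\zeta$ by $\omega^{-1}\zeta$. This turns $g_1=\sum_\ell(-1)^\ell s_{\ell+1}g_{-\ell}$ into
\[
g(\zeta)=\sum_\ell (-1)^\ell s_{\ell+1}\,\omega^{-(\ell+1)d}\,g(\omega^{-(\ell+1)}\zeta),
\]
which is the displayed formula.

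\textbf{Main obstacle and fallback.} I expect the hard part to be Step 1, that is, getting the sign and the inverse right when converting $\tilde\chi_{1-\frac{2}{n+1}}$ to $\tilde\chi_1$ times Stokes factors. If this bookkeeping resists, I would bypass it with a more robust argument. The operator $\phi$ preserves the $(n+1)$-dimensional space $V$. Its matrix in the cyclic basis $g_0,g_{-1},\dots$ is a companion matrix, whose characteristic polynomial is the minimal relation $g_1=\sum c_\ell g_{-\ell}$. This characteristic polynomial equals that of the monodromy root $\tilM$, hence is given by Theorem \ref{Thm-of-Stokes-data} up to inversion. Since that polynomial is palindromic up to sign ($s_i=s_{n+1-i}$ by the symmetry of \eqref{asymptotic-stokes}), the inversion ambiguity does not matter, and the coefficients come out as stated.
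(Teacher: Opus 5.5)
\textbf{Gap: Step 1 is never actually established.} Your overall route is the paper's. The paper derives $\omega^{-d}d_{n+1}\hat{\chi}_g(\omega^{-1}\zeta)M_F=\hat{\chi}_g(\zeta)$ from \eqref{cyclic-symmetry-of-omegahat}, together with \eqref{chione-to-chig} and \eqref{canonical-M}. It then takes the first row and reads $g_0=\sum_\ell(-1)^\ell s_{\ell+1}g_{-(\ell+1)}$ off the first column of $M_F$; your $g_1$-version plus Step 3 is equivalent. But the one computation everything rests on, your Step 1, is left open, and your sketch of it is wrong in two places.

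\begin{itemize}
\item By \eqref{Stokes-factors}, $\tilde{\chi}^{(0)}_1=\tilde{\chi}^{(0)}_{1-\frac{2}{n+1}}\tilQ_{1-\frac{2}{n+1}}\tilQ_{1-\frac{1}{n+1}}$. You reversed the order of the two factors.
\item \eqref{cyclicsymmetry-for-Stokesfactors} gives $\tilQ_{1-\frac{2}{n+1}}=\Pi^{-1}\tilQ_1\Pi$ and $\tilQ_{1-\frac{1}{n+1}}=\Pi^{-1}\tilQ_{1+\frac{1}{n+1}}\Pi$. You conjugated the wrong way.
\end{itemize}
With your versions the product is $\Pi\tilQ_{1+\frac{1}{n+1}}\tilQ_1\Pi^{-1}$, which does not produce $\tilM^{\pm1}$; already for $n=2$, $s\neq 0$ the resulting matrix differs from both. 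With the correct versions the product is $\Pi^{-1}\tilQ_1\tilQ_{1+\frac{1}{n+1}}\Pi=\Pi^{-1}\tilM$. Hence $\tilde{\chi}^{(0)}_{1-\frac{2}{n+1}}=\tilde{\chi}^{(0)}_1\tilM^{-1}\Pi$, and the cyclic symmetry at $k=1$ becomes
\[
\omega^{-d}d_{n+1}\tilde{\chi}^{(0)}_1(\omega^{-1}\zeta)\,\tilM=\tilde{\chi}^{(0)}_1(\zeta).
\]
After substituting $\tilde{\chi}^{(0)}_1=\hat{\chi}_gF$ this is exactly the paper's identity. Its first row says $\phi^{-1}(\hat{\chi}_g^{row})M_F=\hat{\chi}_g^{row}$, i.e.\ $\phi(\hat{\chi}_g^{row})=\hat{\chi}_g^{row}M_F$. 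This is the lower sign in your Step 1.

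\textbf{Step 2 and the fallback do not repair this.} Your display $\phi^{\mp1}(\hat{\chi}_g^{row})=\hat{\chi}_g^{row}M_F^{\pm1}$ is, for either choice of sign, the same statement $\phi(\hat{\chi}_g^{row})=\hat{\chi}_g^{row}M_F^{-1}$, and that statement is false. Your first-column reading is valid only for $\phi(\hat{\chi}_g^{row})=\hat{\chi}_g^{row}M_F$. Once you know $\phi$ is represented by $M_F^{\pm1}$, the shape of the matrix settles the sign. Since $\phi(g_{-j})=g_{-j+1}$, columns $1,\dots,n$ of its matrix must be the standard vectors with indices $0,\dots,n-1$, as in $M_F$; $M_F^{-1}$ shifts the other way.

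The fallback does not bypass Step 1. The claim that the characteristic polynomial of $\phi$ equals that of $\tilM$ is exactly what Step 1 proves, and nothing else in your argument links $\phi$ to $\tilM$. Calling $\tilM$ a root of the monodromy does not help: $\phi^{n+1}$ is the monodromy, but that fixes the eigenvalues of $\phi$ only up to $(n+1)$-th roots of unity. To make the fallback independent, compute the spectrum of $\phi$ directly from the exponents $m_i-i$ at the regular singular point $\zeta=\infty$, visible from the poles of the integrand in \eqref{function-g}. There $\phi$ multiplies $\zeta^{m_i-i}h(\zeta^{-(n+1)})$ by $e^{\frac{2\pi\ii}{n+1}(m_i+\frac{n}{2}-i)}$, which are the numbers in \eqref{asymptotic-stokes}. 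In resonant cases $\phi$ is instead triangular on the logarithmic solutions, with the same diagonal.

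Your companion-matrix and palindromy observations are correct. Either repair — the corrected Step 1 or the independent spectrum computation — completes the proof.
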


\begin{proof}
The cyclic symmetry formula \eqref{cyclic-symmetry-of-omegahat} gives
\begin{equation*}
    \omega^{-d} d_{n+1} \hat{\chi}_g(\omega^{-1}\zeta) M_F
    = \hat{\chi}_g(\zeta).
\end{equation*}
Taking the first row, we obtain
\begin{equation*}
    \omega^{-d} \hat{\chi}^{row}_g(\omega^{-1}\zeta) M_F
    = \hat{\chi}^{row}_g(\zeta).
\end{equation*}
Comparing the first components and using the definition of $g_\ell$, we obtain
\begin{equation*}
    g_0(\zeta)
    = \sum_{\ell=0}^n (-1)^\ell s_{\ell+1}
      g_{-(\ell+1)}(\zeta).
\end{equation*}
The first identity follows from the definition of $g_\ell$.
Applying $\phi^k$ and using $\phi^k(g_j)=g_{j+k}$ gives the recurrence
relation for every $k\in\mathbb{Z}$.
\end{proof}
 
By (\ref{canonical-Stokes}) we can define a nondegenerate bilinear form on $V$ for which $\hat{\chi}_g^{row}$ is a triangular basis, as follows.

\begin{defi} \label{the-bilinear-form}
Define the bilinear form $L$ on $V$ by $L(\hat{\chi}_g^{row},\hat{\chi}_g^{row}) = (S_F)^{-T}$.
Since $(S_F)^{-T}$ is unitriangular, $L$ is nondegenerate.
\end{defi}

\begin{prop} \label{phi-preserves-L}
The bilinear form $L$ is preserved by $\phi$, i.e.,
\begin{equation*}
	L(f_1(\zeta), f_2(\zeta)) = L(\phi(f_1(\zeta)), \phi(f_2(\zeta)) )
\end{equation*}
for any $f_1(\zeta)$, $f_2(\zeta) \in V$.
\end{prop}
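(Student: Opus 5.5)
The plan is to reduce the statement to a matrix identity in the basis $\hat{\chi}_g^{row}=(g_0,g_{-1},\dots,g_{-n})$ and then check that identity directly. Write $\underline{b}:=\hat{\chi}_g^{row}$.

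First I will express $\phi$ in this basis. By Definition \ref{def-of-phi}, $\phi(g_{-j})=g_{-j+1}$. Hence for $j\ge 1$ the $j$-th vector $g_{-j}$ is sent to the $(j-1)$-th vector of $\underline{b}$. The only new vector is $\phi(g_0)=g_1$. By the recurrence \eqref{monodromy-of-g} with $k=1$,
\begin{equation*}
g_1=\sum_{\ell=0}^n(-1)^\ell s_{\ell+1}g_{-\ell}.
\end{equation*}
Since $n$ is even and $s_{n+1}=1$, the last coefficient is $(-1)^n s_{n+1}=1$. So the coefficient column is $(s_1,-s_2,\dots,-s_n,1)^T$, and the other columns form the shifted identity. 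This is exactly the matrix $M_F$ of \eqref{canonical-M}, so
\begin{equation*}
\phi(\underline{b})=\underline{b}\,M_F.
\end{equation*}

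Next I use bilinearity. For any vectors $f_1=\underline{b}u$ and $f_2=\underline{b}v$ we have $L(f_1,f_2)=u^TGv$ with $G:=L(\underline{b},\underline{b})=S_F^{-T}$. Therefore $\phi$ preserves $L$ if and only if
\begin{equation*}
M_F^T S_F^{-T} M_F = S_F^{-T}.
\end{equation*}
Inverting and transposing, this is equivalent to
\begin{equation*}
S_F = M_F S_F M_F^T .
\end{equation*}
This is the identity I will verify.

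For the verification, write $S_F=\sum_{k=0}^n c_k N^k$, where $N$ is the upper shift matrix, $c_0=1$ and $c_k=(-1)^k s_{n+1-k}$. Write $M_F=N+e\,\sigma^T$-type decomposition: $M_F = N + v e_1^T$ with $v=(s_1,-s_2,\dots,-s_n,1)^T$. Here $N e_{j}=e_{j-1}$ handles the $1$'s above the diagonal in columns $2,\dots,n+1$.

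Expanding $M_F S_F M_F^T$ gives four terms:
\begin{equation*}
N S_F N^T,\quad N S_F e_1 v^T,\quad v e_1^T S_F N^T,\quad (e_1^T S_F e_1)\, v v^T .
\end{equation*}
The first term is $S_F$ with its last row and column truncated, because Toeplitz matrices commute with the shift up to boundary terms. In the second term, $S_Fe_1=e_1$, so $NS_Fe_1=Ne_1=0$. In the third term, $e_1^TS_FN^T$ is the first row of $S_F$ shifted. The fourth term is $vv^T$, since $e_1^TS_Fe_1=1$.

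So the identity reduces to checking that these boundary contributions reconstruct the missing last row and column of $S_F$ and cancel elsewhere. I expect the $vv^T$ term and the cross term $v e_1^T S_F N^T$ to cancel off the boundary through a pairing of products $s_a s_b$.

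That cancellation is the main obstacle. It appears to need a palindromic relation between $v$ (built from $s_1,\dots,s_n$) and the first row of $S_F$ (built from $s_n,\dots,s_1$ in reversed order). I will try first to use $s_{n+1-i}=s_i$. This should hold because the numbers in \eqref{asymptotic-stokes} lie on the unit circle, have product $1$, and are closed under complex conjugation by the anti-symmetry $m_i=-m_{n-i}$. Then $s_{n+1-i}=\bar{s}_i$, and $\bar s_i=s_i$ by the $\theta$-reality.

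If the direct cancellation becomes messy, the fallback is conceptual. The relation $\tilde{M}^{(0)}$-cyclic symmetry \eqref{cyclicsymmetry-for-Stokesfactors}, \eqref{antisymmetry-for-Stokesfactors} gives $\tilS_1=\tilQ_1\tilQ_{1+\frac1{n+1}}\cdots$. Combining this with Definition \ref{def-of-QQPi} should show that $\tilde{M}^{(0)}\tilS_1(\tilde{M}^{(0)})^T=\tilS_1$ at the level of Stokes factors. Conjugating by $F$, using \eqref{canonical-M} and \eqref{canonical-Stokes}, then transports this to $M_F S_F M_F^T = S_F$.
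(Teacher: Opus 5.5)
Your proposal is correct. The step you left as an expectation closes in one line. Put $w:=(c_1,\dots,c_n,0)^T$, the shifted first row of $S_F$, and $u:=(c_n,\dots,c_1,1)^T$, the last column of $S_F$. Then $NS_FN^T=S_F-u\,e_{n+1}^T$. Because $n$ is even, $c_{n+1-k}=(-1)^{n+1-k}s_k=(-1)^{k-1}s_k$, so $u=v$ and
\[
M_FS_FM_F^T=S_F+v\,(v+w-e_{n+1})^T,\qquad (v+w)_k=(-1)^k\bigl(s_{n+1-k}-s_k\bigr)\quad(k\le n).
\]
Since $v\neq 0$, the identity holds exactly when $s_{n+1-k}=s_k$ for all $k$. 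For this palindromy, take the numbers $\lambda_i$ in \eqref{asymptotic-stokes}. They satisfy $\lambda_{n-i}=\lambda_i^{-1}$ and $\prod_i\lambda_i=1$. Hence the $(n+1-k)$-th symmetric function of the $\lambda_i$ equals the $k$-th symmetric function of the $\lambda_i^{-1}=\lambda_{n-i}$, which is $s_k$. The appeal to $\theta$-reality is unnecessary.

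Your main route differs from the paper's, which is essentially your fallback. The paper never uses the entries of $M_F$ and $S_F$. It transports the claim to $(\tilM)^T(\tilS_1)^{-T}\tilM=(\tilS_1)^{-T}$. It then writes $\tilS_1={\tilM}^{d}\tilQ_1\Pi^{-d}$ via the cyclic symmetry \eqref{cyclicsymmetry-for-Stokesfactors}; the exponent printed there as $\frac{n-2}{2}$ should be $d$. Finally it cancels factors using the anti-symmetry \eqref{antisymmetry-for-Stokesfactors}.

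Your computation gives a sharper statement. Within these normal forms, $\phi$-invariance of $L$ is equivalent to $s_{n+1-k}=s_k$, which isolates the anti-symmetry $m_i+m_{n-i}=0$ as the only input. The cost is that it depends on the explicit normal forms \eqref{canonical-M}, \eqref{canonical-Stokes} and on Theorem \ref{Thm-of-Stokes-data}. The paper's argument needs only the symmetries of the Stokes factors and conjugation by $F$. It therefore derives the invariance directly from the symmetries of the ODE and does not depend on the chosen normal form.
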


\begin{proof}
Since $\hat{\chi}_g^{row}$ is a basis of $V$, it suffices to show that 
\begin{equation*}
	L(\hat{\chi}_g^{row}(\zeta), \hat{\chi}_g^{row} (\zeta)) = L (\phi( \hat{\chi}_g^{row}(\zeta) ), \phi( \hat{\chi}_g^{row}(\zeta)) ).
\end{equation*}
By the definition of $L$ and the linear map $\phi$, this formula is equivalent to $(S_F)^{-T} = M_F^T (S_F)^{-T} M_F$ and $(\tilS_1)^{-T} = (\tilde{M}^{(0)})^T ( \tilS_1)^{-T} \tilM$.
The Stokes matrix $\tilS_1$ can be written in terms of the Stokes factors:
\begin{align*}
	\tilS_1 &=  \left( \tilQ_{1} \tilQ_{1\frac{1}{n+1}} \Pi \right)^{\frac{n-2}{2}} \Pi^{-d} \tilQ_{1\frac{n}{n+1}} 
		 =  \left( \tilQ_{1} \tilQ_{1\frac{1}{n+1}} \Pi \right)^{\frac{n-2}{2}}  \tilQ_{1} \Pi^{-d}.
\end{align*}
Therefore, we have
\begin{align*}
	& (\tilde{M}^{(0)})^T ( \tilS_1)^{-T} \tilM \\
	&= (\tilQ_{1} \tilQ_{1\frac{1}{n+1}} \Pi)^T  \left\{   \left( \tilQ_{1} \tilQ_{1\frac{1}{n+1}} \Pi \right)^{\frac{n-2}{2}}  \tilQ_{1} \Pi^{-d}\right\}^{-T}   (\tilQ_{1} \tilQ_{1\frac{1}{n+1}} \Pi) \\
	& =  \left\{ \left( \tilQ_{1} \tilQ_{1\frac{1}{n+1}} \tilde{\Pi} \right)^{\frac{n-2}{2}-1} \tilQ_1 \right\}^{-T}   ( \tilQ_{2\frac{1}{n+1}} \Pi) \tilQ_2  \Pi^{d+1}  \\
	& =  \left\{ \left( \tilQ_{1} \tilQ_{1\frac{1}{n+1}} \tilde{\Pi} \right)^{\frac{n-2}{2}-1}  (\tilQ_1 \tilQ_{1\frac{1}{n+1}} \Pi) \tilQ_1  \Pi^{-d}\right\}^{-T}   \\
	& = (\tilS_1)^{-T}.
\end{align*}
Note that $\Pi^{-d} = \Pi^{d+1}$ and $\Pi^T = \Pi^{-1}$.
In this computation, we use \eqref{cyclicsymmetry-for-Stokesfactors} and \eqref{antisymmetry-for-Stokesfactors}.
\end{proof}

\subsection{Properties of $L$: Characterization} \label{subsec:Characterization}
In this section, we explain why the above definition of the bilinear form $L$ is natural.

Let $(\alpha_0, \alpha_1, \dots, \alpha_n)$ be the diagonal entries of $d_{n+1}$, that is, $\alpha_i = \omega^{i}$ ($i = 0, \dots, n$).
On $\Omega_k^{(0)} \cap \Omega_{k+1}^{(0)}$ ($k \in \tfrac{1}{n+1}\ZZ$), there exists a unique permutation $\sigma_k$ such that
\begin{equation*}
	\Real(\alpha_{\sigma_k(0)}\zeta^{-1}) < \Real(\alpha_{\sigma_k(1)}\zeta^{-1}) < \cdots < \Real(\alpha_{\sigma_k(n)}\zeta^{-1}).
\end{equation*}
Then the associated Stokes matrix $\uppS_k$ (explained below) is upper-triangular with respect to the ordering $\sigma_k$ (see \cite{FIKN}, Chapter $1$, Section $1.4$).
We call this order the dominance order on $\Omega_k^{(0)} \cap \Omega_{k+1}^{(0)}$.
The corresponding permutation matrix is denoted by $\Sigma_k = (\delta_{\sigma_k(i),j})_{0 \leq i,j \leq n}$.
We shall give a concrete example in the case $n=2$ in the next section.

\begin{rem}
We define the dominance order to be the reverse of that in \cite{FIKN} so that the associated Stokes matrices become upper triangular. Our definition of the order agrees with the lexicographical order in \cite{Gu} and \cite{CDG3}. 
\end{rem}

\begin{defi}
For each $k \in \tfrac{1}{n+1}\ZZ$, let $\hat{\chi}_k$ be the canonical solution on $\Omega_k^{(0)}$ associated with the formal solution $\tilde{\chi}^{(0)}_f\Sigma_k$, characterized by the asymptotic behaviour
\begin{equation}
    \hat{\chi}_k(\zeta)
    \sim
    \tilde{\chi}^{(0)}_f(\zeta)\Sigma_k,
    \quad
    \text{as} \quad\zeta \to 0,\ \zeta \in \Omega_k^{(0)}.
\end{equation}
By the uniqueness of the canonical solution, we have $\hat{\chi}_k = \tilde{\chi}^{(0)}_k \Sigma_k$.
\end{defi}

\begin{defi}
We define the matrices $B_k$ and $\uppS_k$ as follows:
\begin{align}
	&\hat{\chi}_{k+\frac{1}{n+1}} = \hat{\chi}_k B_k, \\
	&\tilde{\chi}^{(0)}_{k+1} \Sigma_k = \tilde{\chi}^{(0)}_k \Sigma_k \uppS_k.
\end{align}
\end{defi}

\begin{prop} \label{modified-Stokes} \leavevmode
	\begin{enumerate}
		\item $B_k = \Sigma_k^{-1} \tilQ_k \Sigma_{k+\frac{1}{n+1}}$.
		\item $\uppS_k = \Sigma_k^{-1} \tilS_k \Sigma_k$, and $\uppS_k$ is upper-triangular.
		\item $\uppS_{k+\frac{1}{n+1}} = B_k^{-1} \uppS_k B_k^{-T}$. \label{basis-change-of-bilinear}
		\item The matrices $\Sigma_k$, $B_k$ and $\uppS_k$ are periodic in $k$, namely
			$\Sigma_{k+2} = \Sigma_k$, $B_{k+2} = B_k$ and $\uppS_{k+2} = \uppS_k$. \label{Bk-periodic}
	\end{enumerate}
\end{prop}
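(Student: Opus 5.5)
The four parts follow directly from the definitions $\hat{\chi}_k = \tilde{\chi}^{(0)}_k\Sigma_k$, the defining relations \eqref{Stokes-matrices} and \eqref{Stokes-factors}, and the periodicity of the sectors under $k \mapsto k+2$. The plan is to prove them in order.

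\textbf{Parts (1) and (2).} For (1), substitute $\hat{\chi}_k = \tilde{\chi}^{(0)}_k\Sigma_k$ and $\hat{\chi}_{k+\frac{1}{n+1}} = \tilde{\chi}^{(0)}_{k+\frac{1}{n+1}}\Sigma_{k+\frac{1}{n+1}}$ into $\hat{\chi}_{k+\frac{1}{n+1}} = \hat{\chi}_k B_k$. Using $\tilde{\chi}^{(0)}_{k+\frac{1}{n+1}} = \tilde{\chi}^{(0)}_k \tilQ_k$, this becomes
\[
\tilde{\chi}^{(0)}_k \tilQ_k \Sigma_{k+\frac{1}{n+1}} = \tilde{\chi}^{(0)}_k \Sigma_k B_k .
\]
Cancelling the fundamental solution $\tilde{\chi}^{(0)}_k$ gives $B_k=\Sigma_k^{-1}\tilQ_k\Sigma_{k+\frac{1}{n+1}}$. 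For (2), insert $\tilde{\chi}^{(0)}_{k+1}=\tilde{\chi}^{(0)}_k\tilS_k$ into the definition of $\uppS_k$ and cancel in the same way, obtaining $\uppS_k=\Sigma_k^{-1}\tilS_k\Sigma_k$.

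Upper-triangularity is the standard Stokes phenomenon (\cite{FIKN}, Ch.~1, \S1.4). On $\Omega^{(0)}_k\cap\Omega^{(0)}_{k+1}$, the columns $i$ and $j$ of $\tilde{\chi}^{(0)}_{k+1}\Sigma_k$ and $\tilde{\chi}^{(0)}_k\Sigma_k$ are both asymptotic to $\tilde{\chi}^{(0)}_f\Sigma_k$. The $(i,j)$ entry of $\uppS_k$ multiplies a solution behaving like $e^{\alpha_{\sigma_k(i)}/\zeta}$ relative to one behaving like $e^{\alpha_{\sigma_k(j)}/\zeta}$. Such a term can be absorbed without spoiling the asymptotics only when $e^{\alpha_{\sigma_k(i)}/\zeta}$ is recessive relative to $e^{\alpha_{\sigma_k(j)}/\zeta}$ throughout the whole overlap region, i.e.\ only when $\Real(\alpha_{\sigma_k(i)}\zeta^{-1})<\Real(\alpha_{\sigma_k(j)}\zeta^{-1})$ there, which means $i<j$. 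Hence $\uppS_k$ is unipotent upper triangular. The one point to check is that the ordering on the overlap is constant, which holds by the choice of sector widths and is exactly what makes $\sigma_k$ well defined.

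\textbf{Part (3).} Compute
\[
\tilde{\chi}^{(0)}_{k+1+\frac{1}{n+1}}\Sigma_{k+\frac{1}{n+1}} = \hat{\chi}_{k+\frac{1}{n+1}}\,\uppS_{k+\frac{1}{n+1}} = \hat{\chi}_k B_k\, \uppS_{k+\frac{1}{n+1}} .
\]
The same left-hand side can also be reached via $k+1$:
\[
\tilde{\chi}^{(0)}_{k+1+\frac{1}{n+1}} = \tilde{\chi}^{(0)}_{k+1}\tilQ_{k+1} = \tilde{\chi}^{(0)}_k\tilS_k\tilQ_{k+1}.
\]
Now use \eqref{antisymmetry-for-Stokesfactors}, $\tilQ_{k+1}=\tilQ_k^{-T}$, together with the fact that permutation matrices satisfy $\Sigma^{-1}=\Sigma^T$. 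This gives
\[
\Sigma_k^{-1}\tilQ_{k+1}\Sigma_{k+\frac{1}{n+1}} = (\Sigma_k^{-1}\tilQ_k\Sigma_{k+\frac{1}{n+1}})^{-T} = B_k^{-T}.
\]
Comparing the two expressions yields $B_k\uppS_{k+\frac{1}{n+1}}=\uppS_kB_k^{-T}$, which rearranges to the claimed formula $\uppS_{k+\frac{1}{n+1}} = B_k^{-1}\uppS_kB_k^{-T}$. The main thing to get right here is the transpose bookkeeping in the step $\tilQ_{k+1}=\tilQ_k^{-T}$.

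\textbf{Part (4).} The sector $\Omega^{(0)}_{k+2}=e^{-2\pi\ii}\Omega^{(0)}_k$ has the same projection to $\CC^*$. Since $\Real(\alpha_i\zeta^{-1})$ depends only on that projection, the dominance order repeats, so $\Sigma_{k+2}=\Sigma_k$.

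For $\tilQ_{k+2}$, first use \eqref{antisymmetry-for-Stokesfactors} twice: $\tilQ_{k+2}=(\tilQ_{k+1})^{-T}=\tilQ_k$. (Alternatively, apply \eqref{cyclicsymmetry-for-Stokesfactors} $n+1$ times, since $\Pi^{n+1}=I$.) Then parts (1) and (2) give $B_{k+2}=B_k$. Finally, $\tilS_k$ is a product of the $n+1$ consecutive factors $\tilQ_k\tilQ_{k+\frac{1}{n+1}}\cdots$, so it is also $2$-periodic, and hence so is $\uppS_k$.

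The main obstacle is the triangularity argument in (2), which relies on the analytic Stokes theory and on correctly tracking the reversed dominance order of the Remark. Everything else is formal algebra.
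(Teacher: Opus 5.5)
Your proposal is correct and follows the same route as the paper. The paper's proof only says that everything follows from the definitions of $B_k$, $\uppS_k$ and the dominance order, plus $\tilQ_{k+1}=(\tilQ_k)^{-T}$ and $\Sigma_k^T=\Sigma_k^{-1}$ for part (3); you have written out exactly those computations, with the standard recessive/dominant argument for triangularity. One small point: that argument takes column $j$ of $\hat{\chi}_k$ to be column $\sigma_k(j)$ of $\tilde{\chi}^{(0)}_k$, i.e.\ $\Sigma_k=(\delta_{i,\sigma_k(j)})$ as in Table~\ref{table-of-matrices}; this is the transpose of the paper's displayed formula $(\delta_{\sigma_k(i),j})$, and it is the convention that upper-triangularity actually requires when $\sigma_k$ is not an involution.
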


\begin{proof}
The proof follows directly from the definitions of $B_k$ and $\uppS_k$, and that of the dominance order. 
We also use the anti-symmetry condition for Stokes factors in (\ref{antisymmetry-for-Stokesfactors}) and $(\Sigma_k)^T = (\Sigma_k)^{-1}$ to prove (\ref{basis-change-of-bilinear}).
\end{proof}

\begin{theo}
$L( \hat{\chi}^{row}_k, \hat{\chi}^{row}_k ) = (\tilde{S}^{\Delta}_k)^{-T}$.
In particular, $\hat{\chi}^{row}_k$ is a triangular basis of $(V, L)$.
\end{theo}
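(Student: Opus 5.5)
Since $L$ is defined through the basis $\hat{\chi}_g^{row}$, the plan is to express each $\hat{\chi}^{row}_k$ in terms of $\hat{\chi}_g^{row}$ by an explicit change-of-basis matrix and then transform the Gram matrix. The natural base point is $k=1$. By \eqref{chione-to-chig} we have $\tilde{\chi}^{(0)}_1 = \hat{\chi}_g F$, so taking first rows gives $\hat{\chi}^{row}_1 = \hat{\chi}_g^{row} F \Sigma_1$. Bilinearity of $L$ then gives
\begin{equation*}
L(\hat{\chi}^{row}_1,\hat{\chi}^{row}_1) = \Sigma_1^T F^T (S_F)^{-T} F \Sigma_1 .
\end{equation*}
By \eqref{canonical-Stokes}, $S_F = F \tilS_1 F^T$, hence $(S_F)^{-T} = F^{-T}(\tilS_1)^{-T}F^{-1}$. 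The $F$'s cancel and we are left with $\Sigma_1^T(\tilS_1)^{-T}\Sigma_1$. Since $\Sigma_1^T=\Sigma_1^{-1}$, this equals $(\Sigma_1^{-1}\tilS_1\Sigma_1)^{-T}=(\uppS_1)^{-T}$ by Proposition \ref{modified-Stokes}(2). This settles $k=1$.

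Next I would induct in steps of $\tfrac{1}{n+1}$ in both directions. Assume $L(\hat{\chi}^{row}_k,\hat{\chi}^{row}_k)=(\uppS_k)^{-T}$. The defining relation $\hat{\chi}_{k+\frac{1}{n+1}}=\hat{\chi}_k B_k$ passes to first rows, so
\begin{equation*}
L(\hat{\chi}^{row}_{k+\frac{1}{n+1}},\hat{\chi}^{row}_{k+\frac{1}{n+1}}) = B_k^T(\uppS_k)^{-T}B_k .
\end{equation*}
Proposition \ref{modified-Stokes}(\ref{basis-change-of-bilinear}) gives $\uppS_{k+\frac{1}{n+1}}=B_k^{-1}\uppS_kB_k^{-T}$. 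Inverting and transposing yields $(\uppS_{k+\frac{1}{n+1}})^{-T}=B_k^T(\uppS_k)^{-T}B_k$, which closes the induction step. The backward step is the same identity read in reverse. Because $k$ ranges over $\tfrac{1}{n+1}\ZZ$, this covers all $k$; alternatively one can use the periodicity in Proposition \ref{modified-Stokes}(\ref{Bk-periodic}).

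For the final assertion, $\uppS_k$ is upper-triangular by Proposition \ref{modified-Stokes}(2). Its diagonal entries are $1$, since Stokes matrices are unipotent: $\tilS_k$ has unit diagonal and conjugation by a permutation preserves this. Hence $(\uppS_k)^{-T}$ is lower unitriangular, so $L(\hat{\chi}^{row}_k,\hat{\chi}^{row}_k)^T=(\uppS_k)^{-1}$ is upper unitriangular, which is exactly the triangularity condition.

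The main obstacle is bookkeeping rather than a conceptual step. One must check that the row conventions are consistent: $\hat{\chi}^{row}_k$ is the first row of $\hat{\chi}_k$, and right-multiplication by a constant matrix commutes with taking the first row. One must also make sure the analytic continuations are compared on the common overlap sectors, so that $\hat{\chi}_k$ and $\hat{\chi}_g$ define genuine elements of the same solution space $V$ of \eqref{scalar-ODE-omegahat}. Finally, one must justify that the entries of $\tilS_k$ are integers only where needed, namely for the unimodular-lattice interpretation; over $V$ triangularity makes sense with real entries.
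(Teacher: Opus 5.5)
Your proof is correct and follows the paper's argument: the base case $k=1$ via $\tilde{\chi}^{(0)}_1=\hat{\chi}_gF$ and $S_F=F\tilS_1F^T$, then induction in steps of $\tfrac{1}{n+1}$ using $\hat{\chi}_{k+\frac{1}{n+1}}=\hat{\chi}_kB_k$ and Proposition \ref{modified-Stokes}(3). You also handle the backward direction (or periodicity) and the unit diagonal of $\uppS_k$ explicitly, two small points the paper leaves implicit.
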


\begin{proof}
By the definition of $L$, we have $L(\hat{\chi}_g^{row}, \hat{\chi}_g^{row}) = (S_F)^{-T}$.
Moreover \eqref{chione-to-chig} gives $\hat{\chi}_1 =  \hat{\chi}_g (F \Sigma_1) $.
Then, it follows that
\begin{equation*}
\begin{aligned}
	&L(\hat{\chi}^{row}_1, \hat{\chi}^{row}_1) = L(  \hat{\chi}^{row}_g (F \Sigma_1) ,  \hat{\chi}^{row}_g (F \Sigma_1) ) =(F \Sigma_1)^T L(\hat{\chi}^{row}_g, \hat{\chi}^{row}_g) (F \Sigma_1)\\
	 &= (F \Sigma_1)^T (S_F)^{-T}(F \Sigma_1).
\end{aligned}
\end{equation*}
By the definition of $S_F$ in \eqref{canonical-Stokes}, $(\tilS_1)^{-T} = (F^{-1} S_F F^{-T})^{-T}$.
Therefore, 
\begin{equation*}
	(F \Sigma_1)^T (S_F)^{-T}(F \Sigma_1) = (\Sigma_1^{-1} \tilS_1 \Sigma_1)^{-T} = (\uppS_1)^{-T}.
\end{equation*}

Suppose that $L( \hat{\chi}^{row}_k, \hat{\chi}^{row}_k ) = (\tilde{S}^{\Delta}_k)^{-T}$.
Then, by (\ref{basis-change-of-bilinear}) in Proposition \ref{modified-Stokes},
\begin{equation*}
	\begin{aligned}
	&L(\hat{\chi}^{row}_{k+\frac{1}{n+1}}, \hat{\chi}^{row}_{k+\frac{1}{n+1}}) = L(\hat{\chi}^{row}_{k}B_k, \hat{\chi}^{row}_{k}B_k) =B_k^T L(\hat{\chi}^{row}_{k}, \hat{\chi}^{row}_{k}) B_k \\
	&= B_k^T (\uppS_{k})^{-T} B_k= (\tilde{S}^{\Delta}_{k+\frac{1}{n+1}})^{-T}.
	\end{aligned}
\end{equation*}
The result follows by induction.
\end{proof}

Thus the canonical solutions of the system \eqref{matrix-ODE-omegahat}, ordered by dominance, give triangular bases of $(V,L)$ through their first rows. In the case $n=2$, we shall further show in Proposition \ref{prop:distinguished-basis-3times3}  that these are distinguished bases with respect to the initial triangular basis $\hat{\chi}^{row}_{1\frac{1}{3}}$. This gives an explicit relation between the canonical solutions and the braid group action in this case, as illustrated in Table \ref{table-of-matrices}. 
The changes of basis described by the matrices $B_k$ in Table \ref{table-of-matrices}
are given alternately by the actions of the braid group generators
$\sigma_1$ and $\sigma_2$.
This correspondence is made explicit in Proposition~4.21.


\begin{table}[htbp]

\renewcommand{\arraystretch}{1.2}
\caption{$\Sigma_k$, $B_k$ and $\tilde{S}^{\Delta}_k$ for $n=2$.}
\label{table-of-matrices}
\begin{tabular}{c|c|c|c|c} \hline
	$k$ & dominance order on $\Omega_k^{(0)} \cap \Omega_{k+1}^{(0)}$ & $\Sigma_k$ & $B_k$ & $\tilde{S}^{\Delta}_k = L(\hat{\chi}^{row}_k, \hat{\chi}^{row}_k)^{-T}$ \\ \hline \hline
	$1$ & $\Real( \omega\zeta^{-1}) < \Real( \zeta^{-1}) < \Real(\omega^2 \zeta^{-1})$ &
	$\begin{pmatrix}
		0 & 1 & 0 \\
		1 & 0 & 0 \\
		0 & 0 & 1
	\end{pmatrix}$ &
	$\begin{pmatrix}
			s & 1 & 0 \\
			1 & 0 & 0 \\
			0 & 0 & 1
		\end{pmatrix}$ &
	$\begin{pmatrix}
			1 & s & s^2-s \\
			0 & 1 & s \\
			0 & 0 & 1
	\end{pmatrix}$ \\ \hline 
	$1\frac{1}{3}$ & $\Real( \zeta^{-1}) < \Real( \omega \zeta^{-1}) < \Real( \omega^2 \zeta^{-1})$ &
	$	\begin{pmatrix}
			1 & 0 & 0 \\
			0 & 1 & 0 \\
			0 & 0 & 1
		\end{pmatrix}$ &
		$\begin{pmatrix}
			1 & 0 & 0 \\
			0 & -s & 1 \\
			0 & 1 & 0
		\end{pmatrix}$ &
	$\begin{pmatrix}
			1 & -s & s \\
			0 & 1 & -s \\
			0 & 0 & 1
		\end{pmatrix}$ \\ \hline
		$1\frac{2}{3}$ & $\Real( \zeta^{-1}) < \Real( \omega^2 \zeta^{-1}) < \Real( \omega\zeta^{-1})$ &
	$\begin{pmatrix}
			1 & 0  & 0 \\
			0 & 0 & 1 \\
			0 & 1 & 0
		\end{pmatrix}$ &
	$\begin{pmatrix}
			s & 1 & 0 \\
			1 & 0 & 0 \\
			0 & 0 & 1
		\end{pmatrix}$ &
	$\begin{pmatrix}
			1 & s & s^2-s \\
			0 & 1 & s \\
			0 & 0 & 1
	\end{pmatrix}$ \\ \hline
		$2$ & $\Real( \omega^2 \zeta^{-1}) < \Real( \zeta^{-1}) < \Real(\omega \zeta^{-1})$ &
	$	\begin{pmatrix}
			0 & 1 & 0  \\
			0 & 0 & 1 \\
			1 & 0 & 0
		\end{pmatrix}$ &
		$\begin{pmatrix}
			1 & 0 & 0 \\
			0 & -s & 1 \\
			0 & 1 & 0
		\end{pmatrix}$ &
	$\begin{pmatrix}
			1 & -s & s \\
			0 & 1 & -s \\
			0 & 0 & 1
		\end{pmatrix}$ \\ \hline
		$2\frac{1}{3}$ & $\Real( \omega^2 \zeta^{-1}) < \Real( \omega \zeta^{-1}) < \Real( \zeta^{-1})$ &
	$\begin{pmatrix}
			0 & 0 & 1 \\
			0 & 1 & 0 \\
			1 & 0 & 0
		\end{pmatrix}$ &
	$\begin{pmatrix}
			s & 1 & 0 \\
			1 & 0 & 0 \\
			0 & 0 & 1
		\end{pmatrix}$ &
	$\begin{pmatrix}
			1 & s & s^2-s \\
			0 & 1 & s \\
			0 & 0 & 1
	\end{pmatrix}$ \\ \hline
			$2\frac{2}{3}$ & $\Real( \omega \zeta^{-1}) < \Real(\omega^2 \zeta^{-1}) < \Real( \zeta^{-1})$ &
	$	\begin{pmatrix}
			0 & 0 & 1 \\
			1 & 0 & 0 \\
			0 & 1 & 0
		\end{pmatrix}$ &
		$\begin{pmatrix}
			1 & 0 & 0 \\
			0 & -s & 1 \\
			0 & 1 & 0
		\end{pmatrix}$ &
	$\begin{pmatrix}
			1 & -s & s \\
			0 & 1 & -s \\
			0 & 0 & 1
		\end{pmatrix}$ \\ \hline
\end{tabular}

\end{table}


\begin{theo}
The bilinear form $L$ on $V$ is uniquely characterized
by the following conditions:
\begin{equation}\label{prop:conditions-for-L}
\left\{
\begin{aligned}
&L(g_{-i},g_{-j})=0,
&&0\le i<j\le n,\\
&L(g_{-i},g_{-i})=1,
&&0\le i\le n,\\
&L(\phi(v),\phi(w))=L(v,w),
&&v,w\in V.
\end{aligned}
\right.
\end{equation}
Here,
$\hat{\chi}_g^{row}=(g_0,g_{-1},\ldots,g_{-n})$.
In particular,
\begin{equation*}
L(\hat{\chi}_k^{row},
  \hat{\chi}_k^{row})
=(\tilde{S}_k^\Delta)^{-T}
\end{equation*}
for every $k\in\tfrac{1}{n+1}\ZZ$.
\end{theo}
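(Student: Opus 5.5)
The plan is to prove existence and uniqueness separately; the final assertion about $\hat{\chi}_k^{row}$ then follows from the preceding theorem.

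\emph{Existence.} I check that the form $L$ of Definition \ref{the-bilinear-form} satisfies \eqref{prop:conditions-for-L}. By definition the Gram matrix $\bigl(L(g_{-i},g_{-j})\bigr)_{0\le i,j\le n}$ equals $(S_F)^{-T}$. Since $S_F$ is upper unitriangular, $(S_F)^{-T}$ is lower unitriangular. Hence its $(i,j)$-entries vanish for $i<j$ and its diagonal entries equal $1$, which gives the first two conditions. The third condition is exactly Proposition \ref{phi-preserves-L}. The identity $L(\hat{\chi}_k^{row},\hat{\chi}_k^{row})=(\tilde{S}^\Delta_k)^{-T}$ for all $k$ was proved in the preceding theorem, by induction starting from $\hat{\chi}_1=\hat{\chi}_g F\Sigma_1$ and Proposition \ref{modified-Stokes}.

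\emph{Uniqueness.} Let $L'$ be any bilinear form on $V$ satisfying \eqref{prop:conditions-for-L}. Since $(g_0,\dots,g_{-n})$ is a basis, it suffices to show that the values $L'(g_{-i},g_{-j})$ with $i>j$ are forced. By $\phi$-invariance and $\phi(g_\ell)=g_{\ell+1}$, we have
\[
L'(g_{-i},g_{-j})=L'(g_{-r},g_0), \qquad r=i-j\in\{1,\dots,n\}.
\]
So the task is to show that the numbers $c_r:=L'(g_{-r},g_0)$ are determined, and I will do this by induction on $r$.

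\emph{The recursion.} Expand the second argument using the recurrence \eqref{monodromy-of-g} with $k=0$, namely $g_0=\sum_{\ell=0}^n(-1)^\ell s_{\ell+1}g_{-(\ell+1)}$. This gives
\[
c_r=\sum_{\ell=0}^{n}(-1)^\ell s_{\ell+1}\,L'(g_{-r},g_{-(\ell+1)}).
\]
I sort the terms into four cases.
\begin{itemize}
\item If $\ell+1=r$, the term is $(-1)^{r-1}s_r$, by the normalization $L'(g_{-r},g_{-r})=1$.
\item If $\ell+1<r$, then $\phi$-invariance gives $L'(g_{-r},g_{-(\ell+1)})=c_{r-\ell-1}$, which is known by the induction hypothesis.
\item If $r<\ell+1\le n$, the term vanishes by the first condition.
\item The remaining term $\ell=n$ involves $g_{-(n+1)}$, which lies outside the basis. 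This is the step needing care. Applying $\phi^{r}$ gives $L'(g_{-r},g_{-(n+1)})=L'(g_0,g_{-(n+1-r)})$. Since $1\le n+1-r\le n$, this vanishes by the first condition.
\end{itemize}
Therefore
\[
c_r=(-1)^{r-1}s_r+\sum_{\ell+1<r}(-1)^\ell s_{\ell+1}\,c_{r-\ell-1},
\]
which determines every $c_r$ recursively from the Stokes data alone. Consequently the Gram matrix of $L'$ in the basis $\hat{\chi}_g^{row}$ is uniquely determined, and it must coincide with that of $L$, namely $(S_F)^{-T}$, because $L$ satisfies the same conditions.

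The main obstacle I expect is the boundary term $g_{-(n+1)}$, and it is resolved by the $\phi$-shift argument in the last case above. Everything else is bookkeeping with the recurrence.
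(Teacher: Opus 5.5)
Your proof is correct. For uniqueness, though, it takes a longer route than the paper.

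You pin down the first column $c_r=L'(g_{-r},g_0)$ by induction on $r$. You expand $g_0$ with the recurrence \eqref{monodromy-of-g} and treat the boundary term $g_{-(n+1)}$ separately.

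The paper works with the first row instead. The conditions with $i=0$ already prescribe $L'(g_0,g_{-j})=\delta_{0j}$ for every $0\le j\le n$. So $L'(g_0,\cdot)$ is a completely determined linear functional on all of $V$. Then $L'(g_{-i},v)=L'(g_0,\phi^i(v))$ determines $L'$ at once, with no recurrence, no induction and no boundary case.

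Your fourth case is exactly this trick applied to a single vector. Applying it to $c_r=L'(g_0,g_r)$ directly would have finished the argument immediately.

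What your route buys is an explicit formula. Since $c_r=L(g_0,g_r)=a_r$, your recursion is the general-$n$ version of \eqref{recursion-formula}, namely $a_r=\sum_{\ell=0}^{n}(-1)^\ell s_{\ell+1}a_{r-\ell-1}$ with $a_0=1$ and $a_{-1}=\cdots=a_{-n}=0$. It therefore computes the entries of $(S_F)^{-T}$ directly from the Stokes data.
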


\begin{proof}
The form $L$ satisfies these conditions by its definition
and $\phi$-invariance, and the formula for the canonical
bases was established above.

For uniqueness, let $\tilde{L}$ be another bilinear form
satisfying these conditions.
Since
\begin{equation*}
\tilde{L}(g_0,g_{-j})=L(g_0,g_{-j})=\delta_{0j},
\qquad 0\le j\le n,
\end{equation*}
and $(g_0,g_{-1},\ldots,g_{-n})$ is a basis of $V$,
we have $\tilde{L}(g_0,v)=L(g_0,v)$ for every $v\in V$.
Using $\phi^i(g_{-i})=g_0$ and $\phi$-invariance, we obtain
\begin{equation*}
\tilde{L}(g_{-i},v)
=\tilde{L}(g_0,\phi^i(v))
=L(g_0,\phi^i(v))
=L(g_{-i},v)
\end{equation*}
for every $0\le i\le n$ and $v\in V$.
Thus $\tilde{L}=L$.
\end{proof}

\subsection{Properties of $L$: Periodicity} \label{subsec:Periodicity}
In this section, we discuss the relation between the automorphism $\phi$ and the braid group action.

 \begin{prop} \label{braid-and-phi}
$Z(\sigma_1 \sigma_2 \cdots \sigma_n) = \phi \in \Aut(V, L)$. That is,
\begin{equation*}
(\sigma_1 \sigma_2 \cdots \sigma_n) \, \hat{\chi}_g^{row} =  \phi(\hat{\chi}_g^{row}).
\end{equation*}
\end{prop}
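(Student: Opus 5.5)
We compute both sides on the triangular basis $\ee=(e_1,\dots,e_{n+1}):=\hat{\chi}_g^{row}=(g_0,g_{-1},\dots,g_{-n})$ and compare. The left action is $(\sigma_1\cdots\sigma_n)(\ee)=\sigma_1(\sigma_2(\cdots\sigma_n(\ee)))$, with the explicit formulas of Lemma \ref{braid-group-action} transported to the complex vector space $(V,L)$. We use only the triangularity $L(e_i,e_j)=0$ for $i<j$ and $L(e_i,e_i)=1$, which holds because $L(\ee,\ee)=(S_F)^{-T}$ is lower unitriangular (Definition \ref{the-bilinear-form}). The right-hand side is immediate from Definition \ref{def-of-phi}:
\begin{equation*}
\phi(\hat{\chi}_g^{row})=(g_1,g_0,g_{-1},\dots,g_{-(n-1)}).
\end{equation*}

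First we compute the left-hand side. Applying $\sigma_n$ replaces positions $n,n+1$ by $(u_n,e_n)$ with $u_n=e_{n+1}-L(e_{n+1},e_n)e_n$. Then $\sigma_{n-1}$ acts on positions $n-1,n$ and gives $(u_{n-1},e_{n-1})$, and so on. Inductively, set $u_{n+1}=e_{n+1}$ and $u_j=u_{j+1}-L(u_{j+1},e_j)e_j$ for $j=n,\dots,1$. The final tuple is
\begin{equation*}
(\sigma_1\cdots\sigma_n)(\ee)=(u_1,e_1,e_2,\dots,e_n)=(u_1,g_0,\dots,g_{-(n-1)}).
\end{equation*}
So everything reduces to proving $u_1=g_1$.

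Next we characterize $u_1$. Clearly $u_1\in e_{n+1}+\mathrm{span}(e_1,\dots,e_n)$. We claim that $L(u_j,e_i)=0$ for all $j\le i\le n$, proved by downward induction on $j$. From $L(e_j,e_j)=1$ we get $L(u_j,e_j)=L(u_{j+1},e_j)(1-1)=0$. For $i>j$,
\begin{equation*}
L(u_j,e_i)=L(u_{j+1},e_i)-L(u_{j+1},e_j)L(e_j,e_i)=0,
\end{equation*}
using the induction hypothesis and $L(e_j,e_i)=0$ for $j<i$. Hence $L(u_1,e_i)=0$ for $i=1,\dots,n$. Since $L$ restricted to $\mathrm{span}(e_1,\dots,e_n)$ has a unitriangular Gram matrix, it is nondegenerate. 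Therefore $u_1$ is the unique vector in $e_{n+1}+\mathrm{span}(e_1,\dots,e_n)$ that is $L$-orthogonal (in the first slot) to $e_1,\dots,e_n$.

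Then we verify that $g_1$ has the same two properties. The recurrence \eqref{monodromy-of-g} with $k=1$ gives
\begin{equation*}
g_1=\sum_{\ell=0}^n(-1)^\ell s_{\ell+1}g_{-\ell}.
\end{equation*}
Here $n=2d$ is even and $s_{n+1}=1$, so the coefficient of $g_{-n}=e_{n+1}$ is $1$. Hence $g_1\in e_{n+1}+\mathrm{span}(e_1,\dots,e_n)$. By Proposition \ref{phi-preserves-L}, for $0\le i\le n-1$ we have $L(g_1,g_{-i})=L(g_0,g_{-i-1})$, and this vanishes by triangularity of $\ee$. Uniqueness then gives $u_1=g_1$. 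This proves $(\sigma_1\cdots\sigma_n)\hat{\chi}_g^{row}=\phi(\hat{\chi}_g^{row})$, so $Z(\sigma_1\cdots\sigma_n)=\phi$ by Definition \ref{the-map-Z}. Moreover $\phi\in\Aut(V,L)$ by Proposition \ref{phi-preserves-L}.

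The main point of care is the composition convention: which generator acts first, and that the moved vector travels to the front. With the opposite order one would instead obtain $\phi^{-1}$-type behaviour, namely $g_0$ moving to the back. Beyond that, the only real input is the recurrence \eqref{monodromy-of-g}, including the sign check that the coefficient of $g_{-n}$ equals $+1$ in the odd case $n+1=2d+1$.
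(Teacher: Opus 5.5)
Your proof is correct, but it reaches the identity by a different route from the paper's.

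The paper writes the new first vector as $(s^{(p)}_{g_0}\circ\cdots\circ s^{(p)}_{g_{-n+1}})(g_{-n})$. It inserts $s^{(p)}_{g_{-n}}$ via $s^{(p)}_{g_{-n}}(g_{-n})=(-1)^{p+1}g_{-n}$ and uses the Coxeter-element identity of Theorem~\ref{product-of-reflections-monodromy} to recognise this vector as $M(g_{-n})$. It then reads $M(g_{-n})$ off as the last column of $S_F(S_F)^{-T}$ (i.e.\ the last column of $S_F$) and matches it coefficient by coefficient with the full recurrence \eqref{monodromy-of-g} for $g_1$. $\phi$-invariance enters only afterwards, to place $\phi$ in $\Aut(V,L)$.

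You instead iterate the triangular-basis formulas by hand and characterise $u_1$ as the unique vector of $e_{n+1}+\mathrm{span}(e_1,\dots,e_n)$ that is left-orthogonal to $e_1,\dots,e_n$. Since $L(Me_{n+1},e_i)=L(e_i,e_{n+1})=0$ for $i\le n$, this is in effect a hands-on proof that $u_1=M(e_{n+1})$, which is exactly the piece of the Coxeter identity the paper needs. You then get the same orthogonality for $g_1$ from Proposition~\ref{phi-preserves-L}, and use the recurrence only for the coefficient of $g_{-n}$. That sign check is genuinely necessary: $\phi$-invariance and triangularity alone give only $c^2=L(g_1,g_1)=1$ for this coefficient $c$.

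What your route buys:
\begin{itemize}
\item It is self-contained. From the lattice theory it needs only the formula for $\sigma_j$ on triangular bases, not a theorem stated for unimodular $\ZZ$-lattices (here $V$ is complex and the $s_i$ need not be integers).
\item It shows the identity is essentially forced by $\phi$-invariance plus triangularity of the orbit segment $(g_0,\dots,g_{-n})$.
\end{itemize}
The price is that the identity now depends on the Stokes-factor computation behind Proposition~\ref{phi-preserves-L}. The paper's proof of the identity is independent of that computation. It also exhibits the new vector explicitly as the lattice monodromy $M(g_{-n})$, which is the conceptual link to the Hertling--Larabi framework.

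Your closing aside is loose: applying $\sigma_1$ first gives $(s^{(p)}_{e_1}(e_2),\dots,s^{(p)}_{e_1}(e_{n+1}),e_1)$, which is not $\phi^{-1}(\ee)$. It plays no role in the argument, though.
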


\begin{proof}
By the definition of the braid group action in Lemma \ref{braid-group-action}, we have
\begin{equation*}
	\begin{aligned}
		&(\sigma_1 \sigma_2 \cdots \sigma_n )(g_0, \cdots, g_{-n+1}, g_{-n}) = (( s_{g_0}^{(p)} \circ \cdots \circ  s_{g_{-n+1}}^{(p)} ) (g_{-n}), g_0, \cdots, g_{-n+1} )   \\
		= & (( s_{g_0}^{(p)} \circ \cdots \circ  s_{g_{-n+1}}^{(p)} \circ s_{g_{-n}}^{(p)} ) ( (-1)^{p+1}g_{-n}), g_0, \cdots, g_{-n+1} ).
	\end{aligned}
\end{equation*}
Here we use $s_{g_{-n}}^{(p)}(g_{-n}) = (-1)^{p+1} g_{-n}$. 
By Theorem \ref{product-of-reflections-monodromy}, 
\begin{equation*}
( s_{g_0}^{(p)} \circ \cdots \circ  s_{g_{-n+1}}^{(p)} \circ s_{g_{-n}}^{(p)} ) ( (-1)^{p+1}g_{-n})= M(g_{-n})
\end{equation*}
where $M$ is the automorphism defined in \eqref{monodromy-HL}. 
By the definition of $M \in \Aut(V)$, we have 
\begin{align*}
	&M(\hat{\chi}_g^{row}) = M(g_0, \cdots, g_{-n}) = (g_0, \cdots, g_{-n}) S_F (S_F)^{-T} \\
	&= (*, \cdots, *, s_1 g_0 - s_2 g_{-1} + \cdots + s_{n-1} g_{-n+2} - s_n g_{-n+1} + g_{-n} )
\end{align*}
where $*$ is a linear combination of $g_0, \cdots, g_{-n}$.
In particular, $M(g_{-n}) = s_1 g_0 - s_2 g_{-1} + \cdots + s_{n-1} g_{-n+2} - s_n g_{-n+1} + g_{-n} $. 
Therefore, we obtain $(\sigma_1 \sigma_2 \cdots \sigma_n) \, \hat{\chi}_g^{row} =\hat{\chi}_g^{row} M_F$.
On the other hand, applying $\phi$ to $g_0$ and using \eqref{monodromy-of-g}, we obtain $\hat{\chi}_g^{row} M_F = \phi (\hat{\chi}_g^{row} )$.
Finally, using $\phi$-invariance of $L$, we obtain $Z(\sigma_1 \sigma_2 \cdots \sigma_n) \in \Aut(V,L)$.
\end{proof}

\begin{theo} \label{periodicity-of-the-braid}
Consider the solution of the tt$^*$-Toda equations corresponding to holomorphic data $k_0,\dots,k_n \in \mathbb{Z}_{\ge 0}$.
Then,
\begin{equation*}
(\sigma_1 \sigma_2 \cdots \sigma_n )^{\tilde{N}}  \hat{\chi}_g^{row} \, = \hat{\chi}_g^{row},
\end{equation*}
where $\tilde{N} = \frac{N}{\gcd(\alpha_0, \alpha_1, \cdots, \alpha_d)}$, and $\alpha_i = k_i +1$ for $i=0,1,\cdots,n$.
In particular, $\ord(M_F) = \tilde{N}$.
\end{theo}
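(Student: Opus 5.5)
The plan is to reduce the statement to computing the order of the single matrix $M_F$, and then read that order off from the eigenvalues supplied by Theorem \ref{M-diagonal}.

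\emph{Step 1: reduction to $M_F$.} Write $\beta=\sigma_1\sigma_2\cdots\sigma_n$ and $\ee=\hat{\chi}_g^{row}$. By Proposition \ref{braid-and-phi}, $\beta(\ee)=\phi(\ee)=\ee M_F$, and $\phi\in\Aut(V,L)$ by Proposition \ref{phi-preserves-L}. The formulas of Lemma \ref{braid-group-action} only involve $L$ and linear combinations. Hence for any $\psi\in\Aut(V,L)$ and any triangular basis $\underline{v}$ we have $\sigma_j(\psi(\underline{v}))=\psi(\sigma_j(\underline{v}))$. Applying this with $\psi=\phi^{k}$, induction on $k$ gives
\begin{equation*}
\beta^k(\ee)=\phi^k(\ee)=\ee M_F^k .
\end{equation*}
(Alternatively, one can use that $Z$ is an antihomomorphism on the stabilizer, Lemma \ref{lem:anti-homZ}.) Since $\ee$ is a basis, $\beta^k(\ee)=\ee$ if and only if $M_F^k=I$. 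So it suffices to prove $\ord(M_F)=\tilde N$.

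\emph{Step 2: eigenvalues of $M_F$.} The hypothesis $k_i\ge 0$ gives $k_i>-1$. By \eqref{canonical-M}, $M_F$ is conjugate to $\tilde M^{(0)}$. By Theorem \ref{M-diagonal}, $\tilde M^{(0)}$ is diagonalizable with eigenvalues $e^{2\pi\ii\theta_j}$, where $\theta_j=(m_j+\rho_j)/(n+1)$. Therefore $\ord(M_F)$ is the lcm of the orders of the numbers $e^{2\pi\ii\theta_j}$.

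\emph{Step 3: computing the $\theta_j$.} Since $\rho_{i-1}-\rho_i=1$, relation \eqref{holomorphic-data-asymptotic-data} gives
\begin{equation*}
\theta_{i-1}-\theta_i=\frac{\alpha_i}{N}.
\end{equation*}
The conditions $m_i=-m_{n-i}$ and $\rho_i=-\rho_{n-i}$ give $\theta_j=-\theta_{n-j}$. Since $n=2d$, this forces $\theta_d=0$, and therefore
\begin{equation*}
\theta_j=\frac{a_j}{N},\qquad a_j:=\alpha_{j+1}+\cdots+\alpha_d\quad (0\le j<d),
\end{equation*}
with $\theta_{n-j}=-\theta_j$ and the middle eigenvalue equal to $1$. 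All eigenvalues are therefore $N$-th roots of unity, and the order of $e^{2\pi\ii a_j/N}$ is $N/\gcd(N,a_j)$.

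\emph{Step 4: the gcd computation.} The lcm of the numbers $N/\gcd(N,a_j)$ is $N/\gcd(N,a_0,\dots,a_{d-1})$. Because the $a_j$ are successive partial sums from the right, consecutive differences recover the $\alpha_i$, so $\gcd(a_0,\dots,a_{d-1})=\gcd(\alpha_1,\dots,\alpha_d)$. Using $\alpha_i=\alpha_{n+1-i}$ we get $N=\alpha_0+2(\alpha_1+\cdots+\alpha_d)$. Hence $\gcd(N,\alpha_1,\dots,\alpha_d)=\gcd(\alpha_0,\dots,\alpha_d)$, and so $\ord(M_F)=\tilde N$.

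The only delicate points are the two bookkeeping facts: the vanishing of the middle exponent $\theta_d$, and the sign and index conventions relating $m_j+\rho_j$ to the $\alpha_i$. I expect verifying these conventions carefully to be the main obstacle. The rest is formal.
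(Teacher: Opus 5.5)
Your proposal is correct and follows essentially the same route as the paper: reduce to $\ord(M_F)$ via $(\sigma_1\cdots\sigma_n)^k\hat{\chi}_g^{row}=\hat{\chi}_g^{row}M_F^k$, use the diagonalizability from Theorem \ref{M-diagonal} with eigenvalues $e^{2\pi\ii a_j/N}$ (your $a_j$ are the paper's $\ell_j$), and reduce $\gcd(N,a_0,\dots,a_{d-1})$ to $\gcd(\alpha_0,\dots,\alpha_d)$ using $\alpha_i=\alpha_{n+1-i}$. The only differences are cosmetic: you justify Step 1 by the fact that the braid action commutes with $\Aut(V,L)$ rather than only citing Lemma \ref{lem:anti-homZ}, and you derive $\theta_d=0$ explicitly from antisymmetry, whereas the paper uses it implicitly.
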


\begin{proof}
Recall that $Z(\sigma_1\sigma_2\cdots\sigma_n)\in\Aut(V,L)$.
By Lemma \ref{lem:anti-homZ}, we have
\begin{equation*}
(\sigma_1\sigma_2\cdots\sigma_n)^m\hat{\chi}_g^{row}
=
Z((\sigma_1\sigma_2\cdots\sigma_n)^m)\hat{\chi}_g^{row}
=
Z(\sigma_1\sigma_2\cdots\sigma_n)^m\hat{\chi}_g^{row}
=
\hat{\chi}_g^{row}(M_F)^m
\end{equation*}
for $m\in\ZZ$.
It suffices to prove that $\operatorname{ord}(M_F)=\tilde{N}$.

Let $\lambda_i=e^{\frac{\pi\ii}{n+1}(2m_i+n-2i)}$ and $\zeta_N=e^{\frac{2\pi\ii}{N}}$.
Then
$\lambda_{i-1}/\lambda_i
=\exp\bigl(\frac{2\pi\ii}{n+1}(m_{i-1}-m_i+1)\bigr)
=\zeta_N^{\alpha_i}$
by \eqref{holomorphic-data-asymptotic-data}.
By \eqref{holomorphic-data-asymptotic-data} again, we have
\begin{equation*}
\lambda_i=
\begin{cases}
e^{\frac{2\pi\ii}{N}\sum_{j=i+1}^{d}\alpha_j},
&0\le i\le d-1,\\
1,
&i=d,\\
e^{-\frac{2\pi\ii}{N}\sum_{j=d+1}^{i}\alpha_j},
&d+1\le i\le n.
\end{cases}
\end{equation*}
Recall also that $\alpha_{n+1-j}=\alpha_j$.
This implies $\lambda_{d+r}=\lambda_{d-r}^{-1}$,
$1\le r\le d$.
Thus, the eigenvalues in the second half have the same orders as those in the first half.

Let $h:=\gcd(N,\ell_0,\ldots,\ell_{d-1})$, where $\ell_i=\sum_{j=i+1}^{d}\alpha_j$.
Then $(\zeta_N^{\ell_i})^{N/h}=1$ for all $i$.
Since $M_F$ is diagonalizable, it follows that $(M_F)^{N/h}=I$.
On the other hand, suppose $(M_F)^p=I$ for $p\in\ZZ_{>0}$.
This implies $N\mid\ell_i p$ for all $i$.
Since $h$ is the greatest common divisor of $N,\ell_0,\ldots,\ell_{d-1}$, 
there exist integers $a_0,a_1,\ldots,a_{d-1}, b$ such that $h=a_0\ell_0+\cdots+a_{d-1}\ell_{d-1} + bN$.
Multiplying this identity by $p$ and using $N\mid\ell_i p$ for all $i$, we obtain $N\mid hp$.
Since $h\mid N$, it follows that $(N/h)\mid p$.
Thus, $\ord(M_F) =\frac{N}{\gcd(N,\ell_0,\ldots,\ell_{d-1})}$.

Since $\ell_{d-1}=\alpha_d$ and
$\ell_{i-1}-\ell_i=\alpha_i$ for $1\le i\le d-1$,
we have
$\gcd(N,\ell_0,\ldots,\ell_{d-1})
=\gcd(N,\alpha_1,\ldots,\alpha_d)$.
By the definition of $N$,
we have
$\gcd(N,\alpha_1,\ldots,\alpha_d)
=\gcd(\alpha_0,\alpha_1,\ldots,\alpha_d)$.
Consequently,
$\ord(M_F)
=\frac{N}{\gcd(\alpha_0,\alpha_1,\ldots,\alpha_d)}
=\tilde{N}$.
\end{proof}

\subsection{$3 \times 3$ Example} \label{3times3-example}

In this section, we discuss in more detail the case $n=2$.
The condition $m \in \mathcal{A}_{3}$ is just $-1 \leq m_0 \leq \tfrac{1}{2}$. The parameter $m_0$ is related to the holomorphic data $(k_0,k_1,k_2)$ by
\begin{equation*}
    m_0 = \tfrac{k_1 - k_0}{k_0 + k_1 + k_2 + 3}, 
    \qquad 
    t = \tfrac{3}{k_0 + k_1 + k_2 + 3} z^{\frac{k_0 + k_1 + k_2 + 3}{3}}.
\end{equation*}
We consider the scalar ODE
\begin{equation*}
    \zeta(\partial_\zeta + m_0)\,
    \zeta \partial_\zeta\,
    \zeta(\partial_\zeta - m_0) x_0
    = -x_0,
    \qquad
    \partial_\zeta = \zeta \frac{d}{d\zeta}.
\end{equation*}
Its solution
\begin{equation*}
\small
g(\zeta)
=
\int_{c-i\infty}^{c+i\infty}
\Gamma\!\left(\frac{-m_0}{3} - t\right)
\Gamma\!\left(\frac{1}{3} - t\right)
\Gamma\!\left(\frac{m_0+2}{3} - t\right)
\,3^{-3t} e^{-3\pi i t} \zeta^{-3t}\, dt.
\end{equation*}
is holomorphic on the sector $-\tfrac{3\pi}{2} < \arg \zeta < -\tfrac{\pi}{2}$, with asymptotic behaviour
\begin{equation*} 
    g(\zeta) \sim \tfrac{4\pi^2 \ii}{\sqrt{3}} e^{\frac{1}{\zeta}} = C_{asymp} e^{\frac{1}{\zeta}}
\end{equation*}
as $\zeta \to 0$ in that sector.
Recall that $g_0(\zeta)=C_{asymp}^{-1}g(\zeta)$ and $g_k(\zeta) = \omega^{k} g_0(\omega^k \zeta)$.
By (\ref{monodromy-of-g}), we have
\begin{equation}\label{recursion-3times3} 
	g_k = sg_{k-1} -s g_{k-2}+g_{k-3}, \quad k \in \ZZ
\end{equation}
The normalized fundamental solution is
\begin{equation} 
	\hat{\chi}_g^{row} = (g_0, g_{-1}, g_{-2}),
\end{equation}
and
\begin{equation*}
	F = \begin{pmatrix}
		1 & 0 & 0 \\
		-s & 1 & 0 \\
		0 & 0 & 1
	\end{pmatrix} 
\end{equation*}
where $s = s_1 = 2\cos\!\left(\frac{2\pi}{3}(m_0+1)\right) + 1$ is the Stokes data.
In this case $s_1 = s_2$.
Then $M_F$ and $S_F$ are given by

\begin{equation*}
	M_F = 
	\begin{pmatrix}
	s & 1 & 0 \\
	 -s & 0 & 1 \\
	 1 & 0 & 0 \\
	\end{pmatrix}, \quad
	S_F =
	\begin{pmatrix}
		1 & -s & s \\
		0 & 1 & -s \\
		0 & 0 & 1
	\end{pmatrix} \quad \text{and} \quad
	(S_F)^{-T} =
	\begin{pmatrix}
		1 & 0 & 0 \\
		s & 1 & 0 \\
		s^2-s & s & 1
	\end{pmatrix}. 
\end{equation*}

A special feature of the $3 \times 3$ case is that the matrix $F$ coincides with the Stokes factor $(\tilQ_1)^{-1}$.
Therefore the formula (\ref{chione-to-chig}) implies $\hat{\chi}_g = \hat{\chi}_{1\frac{1}{3}}$. (See Table \ref{table-of-matrices}.)
We take the first row of the canonical solution, that is
$
\hat{\chi}_{1\frac{1}{3}}^{row}
=\hat{\chi}_g^{row}
=(g_0,g_{-1},g_{-2}),
$
as the initial basis of $(V,L)$,
where $L$ is defined in
Definition~\ref{the-bilinear-form}.
The set of distinguished bases is then its orbit
$
\mathcal{B}^{\mathrm{dist}}
=
\bigl(\Br_3\ltimes\{\pm1\}^3\bigr) \hat{\chi}_{1\frac{1}{3}}^{row}.
$

\begin{prop}\label{prop:distinguished-basis-3times3}
For $n=2$, the first rows of the canonical solutions
are distinguished bases with respect to the initial triangular basis
$\hat{\chi}_{1\frac{1}{3}}^{row}$:
\begin{equation*}
\{\hat{\chi}_r^{row}\}_{r\in\frac{1}{3}\ZZ}
\subset\mathcal{B}^{dist}.
\end{equation*}
More precisely, they form two $\phi$-orbits:
\begin{equation*}
	\begin{aligned}
		&\hat{\chi}_{1\frac{1}{3}+\frac{2}{3}k}^{row} =\phi^k(\hat{\chi}_{1\frac{1}{3}}^{row}) =(g_k,g_{k-1},g_{k-2}),\\
		&\hat{\chi}_{1+\frac{2}{3}k}^{row} =\phi^k(\hat{\chi}_1^{row}) =(g_{k-1},g_k-sg_{k-1},g_{k-2}),
\end{aligned}
\quad k \in \ZZ.
\end{equation*}
\end{prop}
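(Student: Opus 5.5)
We would start from the initial basis $\hat{\chi}_{1\frac{1}{3}}^{row}=\hat{\chi}_g^{row}=(g_0,g_{-1},g_{-2})$. We would then use the table data ($B_k$ with period $2$ by Proposition \ref{modified-Stokes}(\ref{Bk-periodic}), alternating between the two forms listed in Table \ref{table-of-matrices}) to compute all $\hat{\chi}_r^{row}$ explicitly. Afterwards we would identify each step with a braid generator acting on triangular bases via Lemma \ref{braid-group-action}.

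The first step is to compute forward and backward. Since $\hat{\chi}_{k+\frac13}=\hat{\chi}_kB_k$, we get $\hat{\chi}_{1\frac23}^{row}=\hat{\chi}_{1\frac13}^{row}B_{1\frac13}=(g_0,\,-sg_{-1}+g_{-2},\,g_{-1})$. Using the recursion \eqref{recursion-3times3} in the form $g_1=sg_0-sg_{-1}+g_{-2}$, this equals $(g_0,\,g_1-sg_0,\,g_{-1})$. Next, $\hat{\chi}_2^{row}=\hat{\chi}_{1\frac23}^{row}B_{1\frac23}$ has first entry $sg_0+g_1-sg_0=g_1$, so $\hat{\chi}_2^{row}=(g_1,g_0,g_{-1})$. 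Going backward, $\hat{\chi}_1^{row}=\hat{\chi}_{1\frac13}^{row}B_1^{-1}$; inverting $B_1$ gives $(g_{-1},\,g_0-sg_{-1},\,g_{-2})$.

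This shows $\hat{\chi}_{2}^{row}=\phi(\hat{\chi}_{1\frac13}^{row})$ and $\hat{\chi}_{1\frac23}^{row}=\phi(\hat{\chi}_1^{row})$. For general $k$ we would argue that the one-step transition matrices depend only on the parity of $3r$, by Proposition \ref{modified-Stokes}(\ref{Bk-periodic}) together with the table, which shows $B_{k+\frac23}=B_k$. Hence $\hat{\chi}_{r+\frac23}^{row}=\hat{\chi}_r^{row}P$ for a fixed matrix $P$ when $r\in 1\frac13+\frac23\ZZ$. Since $\phi$ is linear and commutes with right multiplication by constant matrices, induction in both directions then gives the two stated $\phi$-orbit formulas. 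The intermediate basis also fits the pattern: applying $\phi^k$ to $\hat{\chi}_1^{row}$ gives $(g_{k-1},g_k-sg_{k-1},g_{k-2})$, and this matches $\hat{\chi}_{1+\frac23k}^{row}$ by the same induction.

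For membership in $\mathcal{B}^{dist}$, we would compute with Lemma \ref{braid-group-action} and $L(\hat{\chi}_g^{row},\hat{\chi}_g^{row})=(S_F)^{-T}$. We have $L(g_{-2},g_{-1})=s$, so
\[
\sigma_2^{-1}(g_0,g_{-1},g_{-2})=(g_0,g_{-2},g_{-1}-sg_{-2}),
\]
and similarly $\sigma_1(g_0,g_{-1},g_{-2})=(g_{-1}-sg_0,g_0,g_{-2})$. We would compare these with the explicit bases above, which may require inserting signs $\delta_j$ or choosing $\sigma_j$ versus $\sigma_j^{-1}$. The cleanest route is to show $\hat{\chi}_1^{row}=\sigma_1^{-1}\bigl(\hat{\chi}_{1\frac13}^{row}\bigr)$ directly: since $L(g_{-1},g_0)=s$, we get $\sigma_1^{-1}(g_0,g_{-1},g_{-2})=(g_{-1},g_0-sg_{-1},g_{-2})$, which is exactly $\hat{\chi}_1^{row}$.

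Then Proposition \ref{braid-and-phi} gives $\phi=Z(\sigma_1\sigma_2)$, and $\phi\in\Aut(V,L)$ commutes with the braid action on bases because the action is defined through $L$ alone. Therefore $\phi^k(\hat{\chi}_{1\frac13}^{row})=(\sigma_1\sigma_2)^k\hat{\chi}_{1\frac13}^{row}$ and $\phi^k(\hat{\chi}_1^{row})=(\sigma_1\sigma_2)^k\sigma_1^{-1}\hat{\chi}_{1\frac13}^{row}$ both lie in $\mathcal{B}^{dist}$, for negative $k$ as well.

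The main obstacle is the bookkeeping. The table only lists $B_k$ for $k$ up to $2\frac23$ and relies on period-$2$ behavior, so we must check carefully that the alternation of $B_k$ with step $\frac13$ is consistent. We also need to verify that $\phi$ commutes with the braid action, that is, $\sigma_j(\phi\underline v)=\phi(\sigma_j\underline v)$, which holds by $\phi$-invariance of $L$ (Proposition \ref{phi-preserves-L}).
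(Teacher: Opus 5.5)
Your derivation of the two $\phi$-orbit formulas is the paper's argument. The paper also uses $B_{r+\frac{2}{3}}=B_r$ from Table~\ref{table-of-matrices} and phrases the two-step transition as $B_{1\frac{1}{3}}B_1=M_F$, which is what your computation $\hat{\chi}_2^{row}=(g_1,g_0,g_{-1})=\phi(\hat{\chi}_{1\frac{1}{3}}^{row})$ amounts to.

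For membership in $\mathcal{B}^{dist}$ you take a different route. The paper uses the Gram matrices $(\tilde{S}^{\Delta}_r)^{-T}$ of Table~\ref{table-of-matrices} to check that every single step $\hat{\chi}_r^{row}\mapsto\hat{\chi}_r^{row}B_r$ is $\sigma_1$ (for $r\in 1+\frac{2}{3}\ZZ$) or $\sigma_2$ (for $r\in 1\frac{1}{3}+\frac{2}{3}\ZZ$), and then inducts in both directions. You instead combine $\phi=Z(\sigma_1\sigma_2)$ (Proposition~\ref{braid-and-phi}) with the $\phi$-equivariance $\sigma_j(\phi\underline{v})=\phi(\sigma_j\underline{v})$. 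This leaves only one explicit braid computation, $\hat{\chi}_1^{row}=\sigma_1^{-1}(\hat{\chi}_{1\frac{1}{3}}^{row})$, which is correct. Your route saves the per-step Gram bookkeeping but leans on Proposition~\ref{braid-and-phi}, which is proved via Theorem~\ref{product-of-reflections-monodromy}. For $n=2$ you can instead check $(\sigma_1\sigma_2)(g_0,g_{-1},g_{-2})=(g_1,g_0,g_{-1})$ directly from \eqref{recursion-3times3}. The paper's route is self-contained given the table and exhibits the alternation of generators directly.

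One displayed identity is wrong, however: in general $\phi^k(\hat{\chi}_1^{row})\neq(\sigma_1\sigma_2)^k\sigma_1^{-1}\hat{\chi}_{1\frac{1}{3}}^{row}$.
\begin{itemize}
\item Already for $k=1$ the right-hand side is $(\sigma_1\sigma_2)(g_{-1},g_0-sg_{-1},g_{-2})=(g_1,g_{-1},g_0-sg_{-1})$.
\item On the other hand, $\phi(\hat{\chi}_1^{row})=(g_0,g_1-sg_0,g_{-1})$.
\end{itemize}
The element $\sigma_1\sigma_2$ acts as $\phi$ on $\underline{e}=\hat{\chi}_{1\frac{1}{3}}^{row}$ and, by equivariance, on its $\phi$-translates, but not on $\hat{\chi}_1^{row}$. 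The equivariance you invoke actually gives
\[
\phi^k(\sigma_1^{-1}\underline{e})=\sigma_1^{-1}\phi^k(\underline{e})=\sigma_1^{-1}(\sigma_1\sigma_2)^k\underline{e}=\sigma_2(\sigma_1\sigma_2)^{k-1}\underline{e}.
\]
This is still in the orbit, so your membership conclusion survives. With the corrected formula your two families read $\hat{\chi}_{1+\frac{2}{3}k}^{row}=\sigma_2(\hat{\chi}^{row}_{1\frac{1}{3}+\frac{2}{3}(k-1)})$ and $\hat{\chi}^{row}_{1\frac{1}{3}+\frac{2}{3}k}=\sigma_1(\hat{\chi}^{row}_{1+\frac{2}{3}k})$. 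That is exactly the paper's alternating description.
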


\begin{proof}
The explicit matrices $B_r$ and the Gram matrices
in Table~1 give
\begin{equation*}
\hat{\chi}_{r+\frac{1}{3}}^{row}
=\hat{\chi}_r^{row}B_r
=
\begin{cases}
\sigma_1(\hat{\chi}_r^{row}),
&r\in1+\frac{2}{3}\ZZ,\\
\sigma_2(\hat{\chi}_r^{row}),
&r\in1\frac{1}{3}+\frac{2}{3}\ZZ.
\end{cases}
\end{equation*}
Since $\hat{\chi}_{1\frac{1}{3}}^{row}$ is the initial distinguished
basis, these relations show that every
$\hat{\chi}_r^{row}$ belongs to
$\mathcal{B}^{dist}$.

Moreover, the same explicit matrices satisfy
\begin{equation*}
B_{r+\frac{2}{3}}=B_r,
\quad
B_{1\frac{1}{3}}B_1=M_F.
\end{equation*}
Hence, for every $k\in\ZZ$,
\begin{equation*}
\hat{\chi}_{1\frac{1}{3}+\frac{2}{3}k}^{row}
=\hat{\chi}^{row}_{1\frac{1}{3}}M_F^k
=\phi^k(\hat{\chi}^{row}_{1\frac{1}{3}}).
\end{equation*}
The other family is obtained by the change of basis $B_1^{-1}$:
\begin{equation*}
\hat{\chi}_{1+\frac{2}{3}k}^{row}
=\hat{\chi}_{1\frac{1}{3}+\frac{2}{3}k}^{row}B_1^{-1}
=\phi^k(\hat{\chi}^{row}_{1\frac{1}{3}}B_1^{-1})
=\phi^k(\hat{\chi}_1^{row}).
\end{equation*}
The explicit expressions follow from
$\phi^k(g_j)=g_{j+k}$.
\end{proof}

Thus increasing the index $r$ by $\frac{1}{3}$ corresponds to
applying $\sigma_1$ or $\sigma_2$ alternately to the basis
$\hat{\chi}^{row}_r$.


\begin{table}[htbp] 
\renewcommand{\arraystretch}{1.2}
\caption{Examples of distinguished bases $\hat{\chi}_k^{row}$ and distinguished matrices $ L(\hat{\chi}^{row}_k, \hat{\chi}^{row}_k)$ for $n=2$ with respect to $(V,L,\hat{\chi}^{row}_{1\frac{1}{3}})$.}
\label{example-of-distinguished-bases/matrices}
\begin{tabular}{c|c|c} \hline
	$k$ & $\hat{\chi}_k^{row}$   & $ L(\hat{\chi}^{row}_k, \hat{\chi}^{row}_k)$ \\ \hline \hline
	$1$ & $(g_{-1}, g_0 -s g_{-1}, g_{-2})$ &
	$\begin{pmatrix}
			1 & 0 & 0 \\
			-s & 1 & 0 \\
			s & -s & 1
	\end{pmatrix}$ \\ \hline 
	$1\frac{1}{3}$ & $(g_0, g_{-1}, g_{-2} )$ &
	$\begin{pmatrix}
			1 & 0 & 0 \\
			s & 1 & 0 \\
			s^2-s & s & 1
	\end{pmatrix}$ \\ \hline 
	$1\frac{2}{3}$ &   $( g_0, g_1 -sg_{0}, g_{-1})$ &
	$\begin{pmatrix}
			1 & 0 & 0 \\
			-s & 1 & 0 \\
			s & -s & 1
	\end{pmatrix}$ \\ \hline 
	$2$ & $( g_1, g_0, g_{-1} )$ &
	$\begin{pmatrix}
			1 & 0 & 0 \\
			s & 1 & 0 \\
			s^2-s & s & 1
	\end{pmatrix}$ \\ \hline 
	$2\frac{1}{3}$ & $(g_1, g_2 - sg_1 , g_0 )$ &
	$\begin{pmatrix}
			1 & 0 & 0 \\
			-s & 1 & 0 \\
			s & -s & 1
	\end{pmatrix}$ \\ \hline 
	$2\frac{2}{3}$ & $(g_2,g_1,g_0 )$ &
	$\begin{pmatrix}
			1 & 0 & 0 \\
			s & 1 & 0 \\
			s^2-s & s & 1
	\end{pmatrix}$ \\ \hline 
\end{tabular}
\end{table}


\begin{rem}
In general, the upper triangular matrix $S_F$ is not a Stokes matrix. 
In the case of the quantum cohomology of projective spaces, this special feature can be seen in the tables 6.1 of \cite{CDG3}. 
The authors of \cite{CDG3} point out that it occurs only in the cases $n=1$ and $n=2$.
\end{rem}

\subsection{A closed formula for $L$} \label{subsec:A closed formula for L}

Throughout this subsection, we work in the case $n=2$.
We express the values of $L$ on the solutions $g_i$ in terms
of a single sequence $(a_i)_{i\in\mathbb{Z}}$.
The same sequence determines the powers of $\phi$, allowing us to
study the boundedness and periodicity of its ``cyclic orbit" through
$\hat{\chi}^{row}_{1\frac{1}{3}}=(g_0,g_{-1},g_{-2})$.

Recall from Definition \ref{def:chig} that $g_{i} = \omega^{i} g_0(\omega^{i} \zeta)$.
For simplicity, we write $L_{ij} := L(g_i, g_j)$.
As shown in Table \ref{example-of-distinguished-bases/matrices}, 
\begin{equation}
L(\hat{\chi}^{row}_{1\frac{1}{3}}, \hat{\chi}^{row}_{1\frac{1}{3}}) =
\begin{pmatrix}
1 & 0 & 0 \\
s & 1 & 0 \\
s^2 - s & s & 1
\end{pmatrix}.
\end{equation}

\begin{defi} 
For each $r \in \ZZ$, set $a_r :=L(g_0, g_r)$.
\end{defi}
Examples of this definition will appear in Proposition 4.27 below.

By (\ref{recursion-3times3}), we have the recursion formula for $a_r$
\begin{equation} \label{recursion-formula}
a_{r+3} = s a_{r+2} - s a_{r+1} + a_r,
\end{equation}
with $a_0=1$, $a_{-1}=a_{-2}=0$.
These initial values and the recursion formula determine the sequence uniquely in both directions.

\begin{lem} \label{lem:L-sequence}
For any $i,j \in \ZZ$, $L_{ij}$ satisfy
\begin{equation*}
L_{i+1,j+1} = L_{ij},
\qquad
L_{0,i} = L_{0,-i-3}.
\end{equation*}
In particular, all the values $L_{ij}$ are determined by $( a_r )_{ r \in \ZZ_{\geq -2} }$.
\end{lem}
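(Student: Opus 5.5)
The first identity should follow directly from the $\phi$-invariance of $L$. The second should follow from a palindromic symmetry of the recursion \eqref{recursion-formula}, together with a check of initial values.

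For the first identity, recall from Definition \ref{def-of-phi} that $\phi(g_i)=g_{i+1}$ for all $i\in\ZZ$. Proposition \ref{phi-preserves-L} then gives
\[
L_{i+1,j+1}=L(\phi(g_i),\phi(g_j))=L(g_i,g_j)=L_{ij}.
\]
Iterating this, $L_{ij}=L_{0,j-i}=a_{j-i}$. So every value of $L$ on the $g_i$ is a term of the sequence $(a_r)_{r\in\ZZ}$.

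For the second identity I would first justify the recursion \eqref{recursion-formula}. Apply the second argument of $L$ to the relation $g_{r+3}=sg_{r+2}-sg_{r+1}+g_r$, which is \eqref{recursion-3times3} with $k=r+3$, and pair with $g_0$. This gives $a_{r+3}=sa_{r+2}-sa_{r+1}+a_r$ by linearity. The initial values $a_0=1$ and $a_{-1}=a_{-2}=0$ come from the first row of $L(\hat{\chi}^{row}_{1\frac{1}{3}},\hat{\chi}^{row}_{1\frac{1}{3}})$, that is, $L(g_0,g_0)=1$ and $L(g_0,g_{-1})=L(g_0,g_{-2})=0$. Evaluating the recursion at $r=-3$ gives $1=a_0=s\cdot 0-s\cdot 0+a_{-3}$, so $a_{-3}=1$.

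Now define $b_r:=a_{-r-3}$. The key observation is that the characteristic relation $a_{r+3}-sa_{r+2}+sa_{r+1}-a_r=0$ has antipalindromic coefficients $(1,-s,s,-1)$, so it is preserved under index reversal. Concretely,
\[
b_{r+3}-sb_{r+2}+sb_{r+1}-b_r=a_{-r-6}-sa_{-r-5}+sa_{-r-4}-a_{-r-3}.
\]
This is the negative of the recursion at index $-r-6$, hence zero. Moreover
\[
b_0=a_{-3}=1,\qquad b_{-1}=a_{-2}=0,\qquad b_{-2}=a_{-1}=0.
\]
Thus $(b_r)$ satisfies the same recursion with the same three initial values as $(a_r)$. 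Since these data determine the sequence uniquely in both directions, $b_r=a_r$ for all $r$, i.e. $L_{0,i}=L_{0,-i-3}$.

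Finally, for the last claim, $L_{ij}=a_{j-i}$ by the first part. If $j-i\le -3$, then $a_{j-i}=a_{-(j-i)-3}$ with $-(j-i)-3\ge 0$. So every $L_{ij}$ is determined by $(a_r)_{r\ge -2}$.

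The only step needing care is confirming $a_{-3}=1$, i.e. that the initial data are themselves symmetric under $r\mapsto -r-3$. Everything else is formal.
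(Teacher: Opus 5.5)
Your proof is correct and takes essentially the same route as the paper. The first identity comes from $\phi$-invariance, and the second from the compatibility of the three-term recursion for $a_r$ with the reflection $r\mapsto -r-3$, together with matching initial values. The paper writes the second part as an explicit upward induction from the base cases $L_{0,-1}=L_{0,-2}$, $L_{0,0}=L_{0,-3}$, $L_{0,1}=L_{0,-4}$, whereas you express the same computation as uniqueness of two-sided solutions for $b_r=a_{-r-3}$; this covers all $r\in\ZZ$ at once and needs only the check $a_{-3}=1$.
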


\begin{proof}
The first identity follows from $\phi \in \Aut(V,L)$.
We prove the second identity by induction with respect to $i$.
It is easy to check that $L_{0,1} = L_{0,-4} = s$, $L_{0,0} = L_{0,-3}=1$ and $L_{0,-1} = L_{0,-2} = 0$.
Assume that $L_{0,k} = L_{0,-k-3}$ for any $-1 \leq k \leq i$.
Then we have
\begin{equation*}
	\begin{aligned}
		&L_{0,i+1} = L(g_0,g_{i+1}) = L(g_0, s g_i - s g_{i-1} + g_{i-2}) = sL_{0,i} - sL_{0,i-1} + L_{0,i-2} \\
		&= sL_{0,-i-3} - sL_{0,-i-2} + L_{0,-i-1} = L(g_0, s g_{-i-3} - s g_{-i-2} + g_{-i-1}) = L(g_0, g_{-i-4}) \\
		&= L_{0,-i-4}.
	\end{aligned}
\end{equation*}
\end{proof}

We next relate this sequence to the powers of $\phi$.
Recall that $M_F$ represents $\phi$ with respect to $\hat{\chi}^{row}_{1\frac{1}{3}}$.

\begin{prop} \label{prop:MF-and-am}
For every $r \in \ZZ$,
\begin{equation} \label{eq:MF-and-am}
	M_F^r = 
	\begin{pmatrix}
		a_r & a_{r-1} & a_{r-2} \\
		a_{r-2} - sa_{r-1} & a_{r-3} - sa_{r-2} & a_{r-4} - sa_{r-3} \\
		a_{r-1} & a_{r-2} & a_{r-3}
	\end{pmatrix}.
\end{equation}
\end{prop}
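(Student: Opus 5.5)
Since $M_F$ is the matrix of $\phi$ with respect to $\hat{\chi}^{row}_{1\frac{1}{3}}=(g_0,g_{-1},g_{-2})$, we have $\phi^r(g_0,g_{-1},g_{-2})=(g_r,g_{r-1},g_{r-2})=(g_0,g_{-1},g_{-2})M_F^r$. So the $j$-th column of $M_F^r$ gives the coordinates of $g_{r-j}$ in this basis, for $j=0,1,2$. The plan is therefore to compute, for every $m\in\ZZ$, the coordinates of $g_m$ with respect to $(g_0,g_{-1},g_{-2})$ in terms of the sequence $(a_r)$. We then read off the three columns with $m=r,r-1,r-2$.

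To get these coordinates, pair $g_m$ against the basis using $L$. Since $L(\hat{\chi}^{row}_{1\frac{1}{3}},\hat{\chi}^{row}_{1\frac{1}{3}})=(S_F)^{-T}$ is lower unitriangular with known entries, write $g_m=x g_0+y g_{-1}+z g_{-2}$. Then $L(g_{-2},g_m)=z$, $L(g_{-1},g_m)=s z+y$, and $L(g_0,g_m)=(s^2-s)z+s y+x$. By $\phi$-invariance (Lemma \ref{lem:L-sequence}) these left-hand sides equal $a_{m+2}$, $a_{m+1}$ and $a_m$. Solving gives
\begin{equation*}
z=a_{m+2},\qquad y=a_{m+1}-s a_{m+2},\qquad x=a_m-s a_{m+1}+s a_{m+2}-s^2a_{m+2}+s\,\cdot\, s a_{m+2}-\ldots
\end{equation*}
This is not yet in the shape of the claimed matrix, which uses indices $r,r-1,\dots$ rather than $m+2$. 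We will reconcile the two using the symmetry $a_i=a_{-i-3}$ from Lemma \ref{lem:L-sequence} together with the recursion \eqref{recursion-formula}.

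A cleaner route that avoids this reconciliation is induction on $r$. First check $r=0$: the claimed matrix, with $a_0=1$, $a_{-1}=a_{-2}=0$, $a_{-3}=1$ and $a_{-4}=s$ (from Lemma \ref{lem:L-sequence}), is the identity. For example, the entry $a_{-4}-s a_{-3}=s-s=0$ and $a_{-3}=1$. Then show that the right-hand side $R_r$ satisfies $R_{r+1}=R_rM_F$. Right multiplication by $M_F$ sends the columns $(c_0,c_1,c_2)$ to $(s c_0-s c_1+c_2,\;c_0,\;c_1)$. The last two new columns are exactly the shifted columns of $R_{r+1}$, by inspection. For the first new column, the recursion $a_{r+1}=s a_r-s a_{r-1}+a_{r-2}$ is applied entrywise; each row of $R_r$ is a linear combination of shifted copies of the sequence $(a_i)$, and the recursion is linear and shift-invariant. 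Since $M_F$ is invertible, the identity $R_{r+1}=R_rM_F$ also gives $R_{r-1}=R_rM_F^{-1}$, which covers negative $r$; alternatively, the backward recursion covers them directly.

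The only real obstacle is the base case. It needs the values $a_{-3}$ and $a_{-4}$, which follow from the recursion run backwards ($a_{-3}=a_0-s a_{-1}+s a_{-2}=1$, and similarly $a_{-4}=s$), or from Lemma \ref{lem:L-sequence}. I would double-check this small computation against $M_F^1$ as a sanity check: the claimed matrix at $r=1$, using $a_1=s$, is $\begin{pmatrix}s&1&0\\-s&0&1\\1&0&0\end{pmatrix}=M_F$.
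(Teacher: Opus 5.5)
Your induction argument is correct and complete. Once $a_{-3}=1$ and $a_{-4}=s$ are obtained by running \eqref{recursion-formula} backwards, $R_0=I$. Right multiplication by $M_F$ sends columns $(c_0,c_1,c_2)$ to $(sc_0-sc_1+c_2,\,c_0,\,c_1)$. The $(i,j)$ entry of $R_r$ is $f_i(r-j)$, with $f_0(t)=a_t$, $f_1(t)=a_{t-2}-sa_{t-1}$, $f_2(t)=a_{t-1}$, and each $f_i$ satisfies the same two-sided three-term recursion. Hence $R_rM_F=R_{r+1}$ for every $r\in\ZZ$, and so $R_r=M_F^r$ in both directions.

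The paper does the mirror-image computation. It tracks $u_r=M_F^r\varepsilon_1$ under left multiplication and eliminates $y_r,z_r$ from the resulting first-order system. This shows the first coordinate obeys \eqref{recursion-formula} with initial values $a_0,a_1,a_2$. It then reads off $z_r=a_{r-1}$ and $y_r=a_{r-2}-sa_{r-1}$, and gets the other two columns from $M_F\varepsilon_2=\varepsilon_1$, $M_F\varepsilon_3=\varepsilon_2$. The ingredients are the same: the shift structure of $M_F$ plus the recursion. The paper derives the entries, which explains where the formula comes from. You verify a given formula, which is shorter and treats all three rows uniformly.

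The bilinear-form computation in your first paragraph is wrong as written, and you should remove or fix it. Because $L(g_{-i},g_{-j})=0$ for $i<j$, writing $g_m=xg_0+yg_{-1}+zg_{-2}$ gives $L(g_0,g_m)=x$, $L(g_{-1},g_m)=sx+y$ and $L(g_{-2},g_m)=(s^2-s)x+sy+z$. The expressions you wrote are those for $L(g_m,g_{-2})$, $L(g_m,g_{-1})$, $L(g_m,g_0)$, so you have transposed the Gram matrix. At $m=0$ your formulas give $z=a_2=s^2-s$, which is nonzero unless $s\in\{0,1\}$. No use of $a_i=a_{-i-3}$ can reconcile this.

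With the correct slot order the route needs no reconciliation at all. One gets $x=a_m$, then $y=a_{m+1}-sa_m=a_{m-2}-sa_{m-1}$, and $z=a_{m+2}-sa_{m+1}+sa_m=a_{m-1}$, each by one application of \eqref{recursion-formula}. This is exactly column $j$ of the claimed matrix with $m=r-j$. That corrected version would be a genuinely different proof: it reads the entries of $M_F^r$ off values of $L$ via $\phi$-invariance, rather than working from the matrix $M_F$ alone.
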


\begin{proof}
Let $\varepsilon_1,\varepsilon_2,\varepsilon_3$ denote the standard basis vectors of $\RR^3$ and
\begin{equation*}
	u_r :=
	M_F^r \varepsilon_1 =
	\begin{pmatrix}
		x_r \\
		y_r \\
		z_r
	\end{pmatrix}.
\end{equation*}
Since
\begin{equation*}
	M_F
	\begin{pmatrix}
		x \\
		y \\
		z \\
	\end{pmatrix}
	= 
	\begin{pmatrix}
		sx + y\\
		-sx + z \\
		x
	\end{pmatrix},
\end{equation*}
we have 
\begin{align}
	 x_{r+1} = s x_r + y_r, \quad
	 y_{r+1} = -s x_r + z_r, \quad
	 z_{r+1} = x_r. 
\end{align}
Eliminating $y_r$ and $z_r$ from these relations, we obtain
\begin{equation}
	x_{r+3} = s x_{r+2} -s x_{r+1} + x_r.
\end{equation}
Moreover, $x_0=1=a_0$, $x_1=s=a_1$, and $x_2=s^2-s=a_2$.
Since the coefficient of $x_r$ in the recurrence is nonzero, the recurrence determines the sequence both forward and backward.
Thus $x_r=a_r$ for all $r \in \ZZ$. Consequently,
\begin{equation*}
z_r=a_{r-1},\qquad
y_r=-sx_{r-1}+z_{r-1}=a_{r-2}-sa_{r-1},
\end{equation*}
and therefore
\begin{equation}
	u_r
	=
	\begin{pmatrix}
		a_r \\
		a_{r-2}-sa_{r-1} \\
		a_{r-1}
	\end{pmatrix}.
	\label{eq:u-m}
\end{equation}
Since $M_F \varepsilon_2=\varepsilon_1$ and $M_F \varepsilon_3=\varepsilon_2$,
we have
\begin{equation*}
	M_F^r \varepsilon_2=u_{r-1},
	\quad
	M_F^r \varepsilon_3=u_{r-2}.
\end{equation*}
Therefore, the columns of $(M_F)^m$ are respectively
$u_r,u_{r-1}$, and $u_{r-2}$. Using \eqref{eq:u-m}, we obtained the desired formula.
\end{proof}

\begin{cor} \label{cor:order=period}
The automorphism $\phi$ has finite order if and only if $(a_r)_{r \in \ZZ}$ is periodic. In this case,
\begin{align*}
	\ord(\phi) = \per( (a_r)_{r \in \ZZ}).
\end{align*}
Moreover, the sequence $( a_r )_{r \in \ZZ}$ is bounded if and only if the cyclic orbit
\begin{align*}
	\langle \phi \rangle (\hat{\chi}^{row}_{1\frac{1}{3}}) = \{ \phi^r(\hat{\chi}^{row}_{1\frac{1}{3}}) \, | \, r \in \ZZ \}
\end{align*}
is bounded.
\end{cor}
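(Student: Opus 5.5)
The plan is to read everything off the explicit formula \eqref{eq:MF-and-am} of Proposition \ref{prop:MF-and-am}. Since $M_F$ represents $\phi$ in the basis $\hat{\chi}^{row}_{1\frac{1}{3}}=(g_0,g_{-1},g_{-2})$, we have $\phi^r(\hat{\chi}^{row}_{1\frac{1}{3}})=\hat{\chi}^{row}_{1\frac{1}{3}}M_F^r$. Thus $\phi^r=\mathrm{id}$ if and only if $M_F^r=I$, and the orbit $\langle\phi\rangle(\hat{\chi}^{row}_{1\frac{1}{3}})$ is bounded if and only if the set $\{M_F^r\}_{r\in\ZZ}$ is bounded. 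Here boundedness means boundedness of the coordinates with respect to the fixed basis; any norm on $V$ gives an equivalent notion, since $V$ is finite dimensional.

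For the order statement, I would first show that $M_F^p=I$ forces $a_{r+p}=a_r$ for all $r$. Indeed, $M_F^{r+p}=M_F^r$ for every $r$, and comparing the $(1,1)$ entries in \eqref{eq:MF-and-am} gives $a_{r+p}=a_r$. Conversely, suppose $a_{r+p}=a_r$ for all $r$. Every entry of $M_F^{r}$ is a fixed linear combination of $a_{r},\dots,a_{r-4}$, so $M_F^{r+p}=M_F^r$. Taking $r=0$ gives $M_F^p=I$. Together these show that the set of $p$ with $\phi^p=\mathrm{id}$ equals the set of periods of $(a_r)$. Hence $\phi$ has finite order exactly when $(a_r)$ is periodic, and the minimal positive elements of the two sets coincide, so $\ord(\phi)=\per((a_r))$.

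For boundedness, if $(a_r)$ is bounded, then each entry of $M_F^r$ is bounded, since it is either $a_j$ or $a_j-sa_{j+1}$ with $s$ a fixed real number. Conversely, if the orbit is bounded, then the $(1,1)$ entries $a_r$ of $M_F^r$ are bounded. There is no real obstacle here. The only points needing care are that \eqref{eq:MF-and-am} holds for all $r\in\ZZ$, including negative $r$, which the proposition already covers, and that "periodic" is understood with minimal period matching minimal order, as arranged above.
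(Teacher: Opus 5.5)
Your proposal is correct and takes essentially the same route as the paper. You read everything off the explicit formula \eqref{eq:MF-and-am}. You bound the entries of $M_F^r$ in terms of $a_r$ and the fixed $s$, and you recover $a_r$ as the $(1,1)$ entry for the converse. Your two-way argument for the order (comparing $(1,1)$ entries of $M_F^{r+p}=M_F^r$, then setting $r=0$ for the converse) just spells out what the paper calls immediate.
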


\begin{proof}
The first assertion follows immediately from \eqref{eq:MF-and-am}.
For boundedness, we identify each ordered basis $\underline b$ of $V$ with the unique invertible matrix $B$ satisfying $\underline{b}=\hat{\chi}^{row}_{1\frac{1}{3}} B$ 
and use the maximum-entry norm on matrices.
If $|a_r|\leq K$ for all $r \in \ZZ$, then
\eqref{eq:MF-and-am} gives
\begin{equation*}
\|M_F^r\|_{\max}\leq (1+|s|)K 
\end{equation*}
for all $r \in \ZZ$.
Conversely, the boundedness of the sequence follows from $|a_r|\leq\|M_F^r\|_{\max}$.
Thus $(a_r)$ is bounded if and only if $\{M_F^r\}_{r \in \ZZ}$ is bounded.
\end{proof}

\begin{prop} For every $r \geq 0$,
\begin{equation} 
	a_{r} 
	=  \sum_{k=0}^{r} U_{k}\left(\frac{s-1}{2}\right), \label{Chebyshev} 
\end{equation}
where $U_{k}$ is the Chebyshev polynomial of the second kind. 
If $-1 < s < 3$, we have 
\begin{equation}\label{eq:Chebyshev-2}
a_{r}=
\frac{\cos(\theta/2)-\cos((r+\tfrac{3}{2})\theta)}
{2\sin\theta\sin(\theta/2)}
\end{equation}
where $s = 1 + 2 \cos \theta$ with $0 < \theta < \pi$.
\end{prop}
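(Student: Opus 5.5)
The plan is to identify the generating function of $(a_r)$ and factor its denominator so that the Chebyshev generating function appears. The recursion \eqref{recursion-formula} has characteristic polynomial
\begin{equation*}
x^3 - s x^2 + s x - 1 = (x-1)\bigl(x^2 - (s-1)x + 1\bigr),
\end{equation*}
which follows by expanding the right-hand side. For $r\ge 0$ the initial values $a_{-1}=a_{-2}=0$, $a_0=1$ let us write $A(x)=\sum_{r\ge 0} a_r x^r$. Multiplying the recursion by $x^{r+3}$ and summing, the boundary terms involve only $a_0,a_1,a_2$ together with $a_{-1}=a_{-2}=0$. The result should be
\begin{equation*}
A(x) = \frac{1}{1 - s x + s x^2 - x^3} = \frac{1}{(1-x)\bigl(1-(s-1)x+x^2\bigr)}.
\end{equation*}
A cleaner check is that the sequence extended by zeros to negative indices satisfies the recursion for all $r\ge -2$ with the single ``impulse'' $a_0=1$. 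The denominator is $1-sx+sx^2-x^3$ because the reversed characteristic polynomial factors the same way.

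Next I will use the standard generating function $\sum_{k\ge0}U_k(y)x^k = (1-2yx+x^2)^{-1}$ with $y=\tfrac{s-1}{2}$. Multiplying it by $(1-x)^{-1}=\sum x^j$ takes partial sums of the coefficients, which gives \eqref{Chebyshev} immediately. As a sanity check, $a_1 = U_0+U_1 = 1+(s-1)=s$ and $a_2 = 1+(s-1)+((s-1)^2-1) = s^2-s$, both matching the values used earlier.

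For \eqref{eq:Chebyshev-2}, with $s=1+2\cos\theta$ we have $y=\cos\theta$ and $U_k(\cos\theta)=\sin((k+1)\theta)/\sin\theta$, which is valid since $0<\theta<\pi$ gives $\sin\theta\neq0$. The condition $-1<s<3$ is exactly what makes such a $\theta$ exist. It remains to evaluate $\sum_{k=0}^r \sin((k+1)\theta)$. Multiply by $2\sin(\theta/2)$ and telescope using $2\sin(\theta/2)\sin(j\theta) = \cos((j-\tfrac12)\theta)-\cos((j+\tfrac12)\theta)$. This gives $\cos(\theta/2)-\cos((r+\tfrac32)\theta)$, and dividing by $2\sin\theta\sin(\theta/2)$ yields the formula.

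The only step needing care is the generating-function bookkeeping with the initial conditions. An alternative route avoids it: show directly that $b_r:=\sum_{k=0}^r U_k$ satisfies the recursion. Since $U_{k+1}=(s-1)U_k-U_{k-1}$, the differences $b_r-b_{r-1}=U_r$ satisfy the second-order recursion with characteristic polynomial $x^2-(s-1)x+1$. Hence $b_r$ satisfies the third-order recursion with factor $(x-1)$, and it agrees with $a_r$ at $r=0,1,2$. I would present this inductive version, since it is the most transparent.
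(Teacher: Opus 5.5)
Your proposal is correct and is essentially the paper's argument. The paper sets $b_r=a_r-a_{r-1}$, checks $b_{r+2}-(s-1)b_{r+1}+b_r=0$ with $b_{-1}=0$, $b_0=1$, identifies $b_r=U_r\bigl(\tfrac{s-1}{2}\bigr)$ and sums; your inductive partial-sum route is the same argument read in the opposite direction, since both rest on the factorization $x^3-sx^2+sx-1=(x-1)(x^2-(s-1)x+1)$, and your trigonometric telescoping is identical to the paper's.

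The paper also extends the closed formula to all $r\in\ZZ$ via the symmetry $r\mapsto -r-3$, and uses that extension later, but this goes beyond the statement as posed.
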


\begin{proof}
Set $b_r=a_r-a_{r-1}$ for $r\in\mathbb{Z}$.
By \eqref{recursion-formula},
\begin{equation*}
b_{r+2}-(s-1)b_{r+1}+b_r =a_{r+2}-sa_{r+1}+sa_r-a_{r-1} =0.
\end{equation*}
We have $b_{-1}=0$, $b_0=1$ and $b_1=s-1$.
The Chebyshev polynomial of the second kind $U(x)$ satisfies
\begin{equation*}
U_0(x)=1,\qquad U_1(x)=2x,\qquad
U_{r+2}(x)=2xU_{r+1}(x)-U_r(x).
\end{equation*}
Comparing the recursion formula and initial values gives
\begin{equation*}
b_r=U_r\!\left(\frac{s-1}{2}\right),
\qquad r\geq 0.
\end{equation*}
Summing from $0$ to $r$ and using $a_{-1}=0$ proves
\eqref{Chebyshev}.

Suppose $-1<s<3$, and write $s=1+2\cos\theta$ with
$0<\theta<\pi$. Since
\begin{equation*}
U_k(\cos\theta)=\frac{\sin((k+1)\theta)}{\sin\theta},
\end{equation*}
we have, for $r\geq 0$,
\begin{equation*}
a_r=\frac{1}{\sin\theta}\sum_{k=1}^{r+1}\sin(k\theta).
\end{equation*}
The identity
\begin{equation*}
2\sin(\theta/2)\sin(k\theta)
=\cos((k-\tfrac12)\theta)-\cos((k+\tfrac12)\theta)
\end{equation*}
gives \eqref{eq:Chebyshev-2} for $r\geq 0$.

The right-hand side of \eqref{eq:Chebyshev-2} vanishes for $r=-1,-2$ and is unchanged under $r\mapsto-r-3$.
This extends the formula to all $r\in\mathbb{Z}$.
\end{proof}

The explicit formula gives the following consequences for the
sequence and the cyclic orbit of $\phi$.

\begin{theo}\label{thm:an-bounded-periodic}
Suppose that $-1<s<3$, and let $0<\theta<\pi$ be determined by
$s=1+2\cos\theta$.
\begin{enumerate}
\item The sequence $(a_r)_{r \in \ZZ}$ and the cyclic orbit
$\langle\phi\rangle\hat{\chi}^{row}_{1\frac{1}{3}}$ are bounded.
\item The sequence is periodic if and only if
$\theta/(2\pi)\in\mathbb{Q}$.
If $\theta/(2\pi)=p/q$ in lowest terms, with $q>0$, then
\begin{equation*}
\operatorname{period} \left((a_r)_{r \in \ZZ } \right)
=\operatorname{order}(\phi)=q.
\end{equation*}
In particular, the cyclic orbit is finite if and only if
$\theta/(2\pi)\in\mathbb{Q}$.
\end{enumerate}
\end{theo}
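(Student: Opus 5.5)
The plan is to read everything off the closed formula \eqref{eq:Chebyshev-2}, which the preceding proposition establishes for all $r\in\ZZ$, and then transfer the conclusions to $\phi$ and its cyclic orbit through Corollary \ref{cor:order=period}. Throughout, fix $0<\theta<\pi$ with $s=1+2\cos\theta$. Then $\sin\theta>0$ and $\sin(\theta/2)>0$, so the denominator $2\sin\theta\sin(\theta/2)$ is a nonzero constant independent of $r$.

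For (1), the numerator $\cos(\theta/2)-\cos((r+\tfrac32)\theta)$ has absolute value at most $2$. Hence
\begin{equation*}
|a_r|\le \frac{1}{\sin\theta\,\sin(\theta/2)}\quad\text{for all } r\in\ZZ,
\end{equation*}
so the sequence is bounded. The second half of Corollary \ref{cor:order=period} then gives boundedness of $\langle\phi\rangle\hat{\chi}^{row}_{1\frac{1}{3}}$.

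For (2), periodicity of $(a_r)$ is equivalent to periodicity of $r\mapsto\cos((r+\tfrac32)\theta)$, with the same periods, because $a_r$ is an affine function of this cosine with nonzero slope.

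\emph{Sufficiency.} Suppose $\theta/(2\pi)=p/q$ in lowest terms. Shifting $r$ by $q$ changes the argument by $2\pi p$, so $q$ is a period.

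\emph{Necessity.} Suppose $P>0$ is a period, so $\cos((r+P+\tfrac32)\theta)=\cos((r+\tfrac32)\theta)$ for every $r$. Equality of cosines means, for each $r$, that either $P\theta\in2\pi\ZZ$ or $(2r+P+3)\theta\in2\pi\ZZ$. If $P\theta\notin2\pi\ZZ$, the second alternative must hold for both $r$ and $r+1$. Subtracting gives $2\theta\in2\pi\ZZ$, which is impossible since $0<\theta<\pi$. Hence $P\theta\in2\pi\ZZ$.

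This shows two things. First, a period exists only if $\theta/(2\pi)\in\mathbb{Q}$. Second, when $\theta/(2\pi)=p/q$ in lowest terms, $Pp/q\in\ZZ$ forces $q\mid P$. Combined with sufficiency, the minimal period is exactly $q$. The first part of Corollary \ref{cor:order=period} then yields $\operatorname{order}(\phi)=\operatorname{period}((a_r)_{r\in\ZZ})=q$.

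Finally, the cyclic orbit is finite if and only if $\phi^P(\hat{\chi}^{row}_{1\frac{1}{3}})=\hat{\chi}^{row}_{1\frac{1}{3}}$ for some $P>0$.
\begin{itemize}
\item If the orbit is finite, pigeonhole gives $\phi^{r}(\hat{\chi}^{row}_{1\frac{1}{3}})=\phi^{r'}(\hat{\chi}^{row}_{1\frac{1}{3}})$ for some $r\neq r'$; applying $\phi^{-r'}$ gives such a $P=|r-r'|$.
\item Since $\hat{\chi}^{row}_{1\frac{1}{3}}$ is a basis of $V$, fixing it means $\phi^P=\mathrm{id}$, so $\phi$ has finite order, which we have shown happens exactly when $\theta/(2\pi)\in\mathbb{Q}$.
\end{itemize}

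The only step needing care is the necessity direction, which excludes the "reflected" solution branch of the cosine equation using two consecutive values of $r$. Everything else is direct from the closed formula and Corollary \ref{cor:order=period}.
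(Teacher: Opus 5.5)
Your proof is correct and follows the paper's argument. For (1) you use the same bound $|a_r|\le 1/(\sin\theta\sin(\theta/2))$ from \eqref{eq:Chebyshev-2} together with Corollary \ref{cor:order=period}. For (2) you reduce periodicity to $T\theta\in 2\pi\ZZ$, which gives minimal period $q$ and $\operatorname{order}(\phi)=q$. The only differences are cosmetic. In the necessity step the paper passes to the differences $b_r=\sin((r+1)\theta)/\sin\theta$ and evaluates at $r=T-1,T$ to get $\sin(T\theta)=0$ and $\cos(T\theta)=1$, whereas you rule out the reflected branch of $\cos A=\cos B$ using two consecutive values of $r$. You also spell out the pigeonhole argument behind the final ``in particular'' clause, which the paper leaves implicit.
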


\begin{proof}
Since $0<\theta<\pi$, formula \eqref{eq:Chebyshev-2} gives
\begin{equation*}
|a_r| = \left\lvert \frac{ \sin \left( \frac{(r+2)\theta}{2} \right) \sin \left( \frac{(r+1)\theta}{2} \right) }{\sin \theta \sin (\theta /2)}  \right\rvert \leq\frac{1}{\sin\theta\sin(\theta/2)}
\qquad (r\in\mathbb{Z}).
\end{equation*}
Hence the sequence is bounded, and boundedness of the cyclic orbit follows from Corollary \ref{cor:order=period}.

To prove the second assertion, let $T$ be a positive integer.
If $T\theta\in2\pi\mathbb{Z}$, then \eqref{eq:Chebyshev-2} gives $a_{r+T}=a_r$ for every $r\in\mathbb{Z}$.
Conversely, suppose $(a_r)$ satisfies $a_{r+T}=a_r$ for every $r\in\mathbb{Z}$.
Then $b_r = a_r-a_{r-1}$ also satisfies $b_{r+T}=b_r$ for every $r\in\mathbb{Z}$.
Taking the difference of the expressions in
\eqref{eq:Chebyshev-2}, we obtain
\begin{equation*}
b_r = \frac{2 \sin((r+1)\theta) \sin(\theta /2) }{2 \sin \theta \sin (\theta/2)} =\frac{\sin((r+1)\theta)}{\sin\theta}
\quad (r\in\mathbb{Z}).
\end{equation*}
Thus $b_{T-1}=b_{-1}=0$ and $b_T=b_0=1$ imply
\begin{equation*}
\sin(T\theta)=0,\quad \sin((T+1)\theta)=\sin\theta.
\end{equation*}
Using the addition formula in the second equality and
$\sin\theta\neq 0$, we get $\cos(T\theta)=1$.
Therefore, $a_{r+T}=a_r$ for every $r\in\mathbb{Z}$ if and only if $T\theta\in2\pi\mathbb{Z}$.
A positive integer $T$ satisfying this condition exists if and only if $\theta/(2\pi)$ is rational.
If $\theta/(2\pi)=p/q$ in lowest terms with $q>0$, the condition is equivalent to $q\mid T$. 
Consequently, the least positive period is $q$, and Corollary \ref{cor:order=period} gives $\operatorname{ord}(\phi)=q$.
\end{proof}

\begin{rem}
In terms of the asymptotic data $-1<m_0<\tfrac12$, we have
$\theta=\tfrac{2\pi}{3}(m_0+1)$.
Thus, when $m_0$ is rational,
\begin{equation*}
\per \left( (a_r)_{r \in \ZZ} \right)
=\operatorname{den}\!\left(\frac{m_0+1}{3}\right),
\end{equation*}
where $\operatorname{den}(r)$ denotes the positive denominator of a rational number $r$ in lowest terms.
For the solutions corresponding to $k_0,k_1,k_2\in\mathbb{Z}_{\geq 0}$, Theorem \ref{periodicity-of-the-braid} and Corollary \ref{cor:order=period} give
\begin{equation*}
\tilde N=
\frac{N}{\gcd(\alpha_0,\alpha_1,\alpha_2)},
\quad
N=\alpha_0+\alpha_1+\alpha_2,\quad \alpha_i=k_i+1,
\end{equation*}
as a period of the sequence.
\end{rem}

\begin{table}[htbp]
\caption{Sequences $(a_r)_{r\geq0}$ and their behaviour, signatures of $L+L^T$,
and cardinalities of the sets of distinguished Stokes matrices and
distinguished bases modulo sign changes for $s=-1,0,1,2,3$.
For $s=0,1,2$, the displayed tuple is one repeating block,
and the indicated period is the least period of the sequence.}
\label{tab:sequence-signature-orbits}
\centering
\small
\setlength{\tabcolsep}{3pt}
\renewcommand{\arraystretch}{1.4}
\begin{tabular}{c|c|c|c|c|c}
\hline
$s$
& $(a_r)_{r\geq0}$
& Behaviour
& $\operatorname{sign}(L+L^T)$
& $\#(\mathcal{S}^{\mathrm{dist}}/\{\pm1\}^3)$
& $\#(\mathcal{B}^{\mathrm{dist}}/\{\pm1\}^3)$
\\ \hline \hline
$-1$
& $a_r=\dfrac{1+(-1)^r(2r+3)}{4}$
& \begin{tabular}[c]{@{}c@{}}
  Unbounded;\\linear growth
  \end{tabular}
& $(+,+,0)$ & $4$ & $\infty$
\\ \hline
$0$
& $(1,0,0)$
& Period $3$
& $(+,+,+)$ & $1$ & $6$
\\ \hline
$1$
& $(1,1,0,0)$
& Period $4$
& $(+,+,+)$ & $4$ & $16$
\\ \hline
$2$
& $(1,2,2,1,0,0)$
& Period $6$
& $(+,0,0)$ & $1$ & $\infty$
\\ \hline
$3$
& $a_r=\dfrac{(r+1)(r+2)}{2}$
& \begin{tabular}[c]{@{}c@{}}
  Unbounded;\\quadratic growth
  \end{tabular}
& $(+,-,-)$ & $\infty$ & $\infty$
\\ \hline
\end{tabular}
\end{table}
Finally, we summarize in Table 3 some of the properties of L and the distinguished bases in the integer Stokes cases. 
The signatures in Table~\ref{tab:sequence-signature-orbits} are obtained directly from the symmetric Gram matrix
\begin{equation*}
(L+L^T)(\hat{\chi}^{row}_{1\frac{1}{3}},\hat{\chi}^{row}_{1\frac{1}{3}})
=
\begin{pmatrix}
2 & s & s^2-s \\
s & 2 & s \\
s^2-s & s & 2
\end{pmatrix}.
\end{equation*}
For the integral values of $s$ considered here, the braid group action preserves the lattice $\ZZ g_0 \oplus \ZZ g_{-1} \oplus \ZZ g_{-2}$.
Every vector $v$ in a distinguished basis satisfies $(L+L^T)(v,v)=2$.
When $s=0,1$, the symmetric form is positive definite, so this lattice contains only finitely many such vectors.
Consequently, both the set of distinguished bases and the set of distinguished matrices are finite.

The cardinalities modulo sign changes follow from \cite{HL}. For example, Theorem 4.13 and Example 7.16.
For $s=-1,2$, the symmetric form is positive semidefinite, and the set of distinguished matrices is finite whereas the set of distinguished bases is infinite.
These cases show that finiteness of the matrix orbit does not imply finiteness of the basis orbit.
In particular, for $s=2$, the cyclic orbit $\langle\phi\rangle\hat{\chi}^{row}_{1\frac{1}{3}}$ has six elements, although the full braid group orbit of distinguished bases is infinite.

\section*{Acknowledgments}
The author would like to thank Prof. Martin Guest for many helpful discussions and valuable comments on this work, and Prof. Claus Hertling for his helpful suggestions and comments on the braid group action. This paper is a part of the outcome of research performed under a Waseda University Grant for Special Research Projects (Project number: 2025C-720).

\bibliographystyle{plain}
\bibliography{ref}

\bigskip

\begin{flushleft}
\textit{Department of Applied Mathematics}\\ 
\textit{Faculty of Science and Engineering}\\
\textit{Waseda University}\\
\textit{3-4-1 Okubo, Shinjuku-ku, Tokyo 169-8555}\\
\textit{JAPAN}\\
\end{flushleft}

\end{document}